\numberwithin{equation}{section}
\numberwithin{equation}{subsection}
\theoremstyle{plain}
	\newtheorem{thm}[equation]{Theorem}
	\newtheorem{lemma}[equation]{Lemma}
	\newtheorem{prop}[equation]{Proposition}
	\newtheorem{cor}[equation]{Corollary}
	\newtheorem{remark}[equation]{Remark}
	\newtheorem*{remark*}{Remark}
	\newtheorem{example}[equation]{Example}
	\newtheorem{question}{Question}
\theoremstyle{definition}
\author{Tam\'as L\'aszl\'o}
\address{BCAM - Basque Center for Applied Math.,
Mazarredo, 14 E48009 Bilbao, Basque Country – Spain}
\email{tlaszlo@bcamath.org}
\author{Andr\'as N\'emethi}
\address{Alfr\'ed R\'enyi Institute of Mathematics,
Hungarian Academy of Sciences,
Re\'altanoda utca 13-15, H-1053, Budapest, Hungary \newline
 \hspace*{4mm} ELTE - University of Budapest, Dept. of Geometry, Budapest, Hungary \newline \hspace*{4mm}
BCAM - Basque Center for Applied Math.,
Mazarredo, 14 E48009 Bilbao, Basque Country – Spain}
\email{nemethi.andras@renyi.mta.hu }
\title{On the geometry  of strongly flat semigroups and their generalizations}
\begin{document}

\begin{abstract}
Our goal is to convince the readers that the theory of complex normal surface singularities 
can be  a powerful tool in  the study of numerical semigroups, and, in the same time,
 a very rich source of
interesting affine and numerical semigroups. More precisely, 
we prove that the strongly flat semigroups,  which satisfy the  maximality property with respect to the Diophantine Frobenius problem,  are exactly the numerical semigroups associated with negative definite Seifert homology spheres via the possible `weights' of the generic $S^1$--orbit. 
Furthermore, we consider their generalization to the Seifert rational homology sphere case and prove an explicit (up to a Laufer computation sequence) formula for their Frobenius number.
The singularities behind are the weighted homogeneous ones, whose several topological and analytical 
 properties are exploited. 
\end{abstract}


\maketitle

\begin{center}
{\em  Dedicated to L\^e D\~ung Tr\'ang} 
\end{center}

\section{Introduction}

This note creates a bridge between the theory of numerical semigroups and 
the theory of complex normal surface singularities. By several examples we suggest how one of the two theories might provide meaningful and  enriching questions, ideas and simultaneously 
powerful tools to the  other.  

For example, one of the most classical and important problem in the theory of numerical semigroups is
the determination of the (minimal set of) generators and also their Frobenius number. 
These are very difficult  algebraic/combinatorial problems.
In this note we provide a new tool,  based on techniques of singularity theory, which solves these
problems for several semigroups (which can be related with singularities).

In theory of surface singularities two semigroups appear very naturally. If $(X,o)$
is a normal surface singularity, and we fix a good resolution of it with 
$n$ irreducible exceptional curve, then one defines the `Lipman cone' $\mathcal{S}_{top}$, 
an affine monoid, submonoid of $\mathbb{N}^n$. It can be determined completely from the 
combinatorics of the resolution graph, in particular, it is a topological invariant associated with a 
fixed plumbing graph of the oriented (3--manifold) link of the    singularity. 

The second monoid, $\mathcal{S}_{an}$, still associated with the fixed resolution,
 is a submonoid of $\mathcal{S}_{top}$, and it is determined by the
analytic structure of the singularity. In general is very hard to determine it. 

The projection of $\mathcal{S}_{an}$
to any of the  coordinates of $\mathbb{N}^n$ (given by the choice of one of the
irreducible exceptional curves) provides an interesting  numerical semigroup. It is a real challenge 
to compute its generators,  Frobenius  number, list its properties (eg. it is symmetric 
or not). 

In this note we take the case of weighted homogeneous surface singularities.
 Their links are oriented Seifert 3--manifolds. In particular, the minimal good 
  resolution graph (or, the minimal plumbing graph of the link) 
  is star--shaped. We will assume that the link $M$ is 
 rational homology sphere, that is, all the genus--decorations of the graph are zero. 
 We also take the generic case, when the number of `legs' of the star--shaped graph is 
 $\geq 3$. Furthermore, we will choose  for the projection the central vertex (which, in Seifert
 geometry,  corresponds  to the generic $S^1$--orbit). 
We define $\mathcal{S}_M $ as the projection of $\mathcal{S}_{an}$. 

Our starting point in the construction of the bridge mentioned above is to show that 
$\mathcal{S}_M$ appears naturally in the classical theory of numerical semigroups.
Indeed, Raczunas and Chrz\c astowski-Wachtel in \cite{RChW} characterized certain semigroups, which 
 realizes a sharp upper bound estimate for the Diophantine Frobenius problem/number. They called 
 them `strongly flat semigroups'. In this note we prove 
  that these semigroups are exactly the semigroups of type $\mathcal{S}_M $ associated with 
  Seifert integral homology spheres. 
  
  Then, we continue with the study of $\mathcal{S}_M $ in the rational homology sphere case. 
One of the final goals is the computation the Frobenius number. In order to determine this 
we define a $\mathcal{S}_M $--module as well (which probably was not considered in 
semigroup theory). The main tools are the topological properties of Seifert 3--manifolds,
Pinkham's theory of weighted homogeneous singularities, Laufer's computations sequences,
and results (valid for splice quotient singularities) regarding the generators of
$\mathcal{S}_{an}$. 

In the case of links of Briekorn--Hamm complete intersection we determine the minimal set of 
generators of $\mathcal{S}_M$ as well.

\subsection*{Acknowledgements}
TL was supported by ERCEA Consolidator Grant 615655 – NMST and
by the Basque Government through the BERC 2014-2017 program and by Spanish
Ministry of Economy and Competitiveness MINECO: BCAM Severo Ochoa
excellence accreditation SEV-2013-0323.
He is very grateful to the members of BCAM for the warm hospitality and for providing with him an excellent research environment during his postdoctoral stay in Bilbao.

AN  was partially supported by  NKFIH Grant  K112735.

\section{The Diophantine Frobenius problem and strongly flat semigroups}\label{s:DFP}

\subsection{}\label{ss:DFP}
The famous Diophantine Frobenius problem asks to find an explicit formula for the greatest integer not representable as a nonnegative linear form of a given system of $d$ relatively prime integers $1\leq a_1\leq \ldots \leq a_d$. The  integer defined in this way
is called the {\it Frobenius number}
 of the system, or of the numerical semigroup $G(a_1,\ldots,a_d)$,
  generated by the integers from the system itself. It will be denoted by $f_{G(a_1,\ldots,a_d)}$.

The very first result related to this problem is the well-known formula of Sylvester, namely  $f_{G(a_1,a_2)}=a_1a_2-a_1-a_2$ \cite{syl}.
Although  several formulas for peculiar systems and general bounds exist in the literature,
the problem is still open in full generality. In this note we will
recall/discuss almost nothing from the `classical combinatorial approach' ---
the interested reader might consult for more details
 the excellent monograph of Ram\'irez Alfons\'in \cite{RamAlf} ---, our goal
 is to connect the problem with singularity theory, and
to show the strength  of this new method.
  From this point of view the following sharp estimate  is a good
 starting  point:  Raczunas and Chrz\c astowski-Wachtel \cite{RChW} found the following
upper bound using  the least common multiple ${\rm lcm}(a_1,\ldots,a_d)$
\begin{equation}\label{fbound}
f_{G(a_1,\ldots,a_d)} \leq (d-1)\cdot {\rm lcm}(a_1,\ldots,a_d)-\sum_{i=1}^d a_i.
\end{equation}
Moreover, they have characterized the class of semigroups (or system of generators) for which the equality in (\ref{fbound}) holds. These are the so called strongly flat semigroups, which satisfy the property that for every $i$ one can write $a_i=\alpha_1\ldots\alpha_{i-1}\alpha_{i+1}\ldots\alpha_d$ with each $\alpha_i$ being the greatest common divisor of $a_1,\ldots,a_{i-1},a_{i+1},\ldots,a_d$.
(From this also follows that  ${\rm gcd}(\alpha_i,\alpha_j)={\rm gcd}(a_1,\ldots, a_d)=1 $ 
for $i\not=j$.)

In the sequel we will discuss the geometric and topological aspects of strongly flat semigroups and
also their natural topological generalizations. During this discussion we try to
 embed the solution of the Frobenius problem  into a more complex package
 of topological/geometrical invariants --- even to relate with more semigroups with richer structure.

\section{Normal surface singularities}\label{s:NSS}

\subsection{Notations and preliminaries}\label{ss:nsstop}
We consider a complex normal surface singularity $(X,o)$ whose link $M$ is a rational homology sphere. Fix a good resolution $\pi:\widetilde{X}\to X$ with dual graph $\Gamma$ whose vertices are denoted by $\mathcal{V}$. We use notation $E=\pi^{-1}(o)$ for the exceptional divisor and let $\{E_v\}_{v\in\mathcal{V}}$ be its irreducible components. Notice that $M$ is a rational homology sphere if and only if $\Gamma$ is a tree and all the $E_v$ are rational.

Then  $L:=H_2(\widetilde{X},\mathbb{Z})$ is a lattice freely generated by the classes of the irreducible exceptional divisors $\{E_v\}_{v\in\mathcal{V}}$, together with the nondegenerate negative definite intersection form $I=(E_v,E_w)_{v,w}$. They are exactly the integral cycles supported on $E$.
The determinant $\det(\Gamma)$ of the graph by convention is $\det(-I)$ (which is always positive).

If  $L'$ denotes
$H^2( \widetilde{X}, \mathbb{Z})$, then the intersection form provides an embedding $L \hookrightarrow
L'$ with factor $L'/L\simeq H_1(M,\mathbb{Z})$, denoted by $H$.
In the sequel $[l']$ denotes the class of $l'$. In fact,
$L'\simeq{\rm Hom}_{\mathbb{Z}}(L,\mathbb{Z})$, the dual lattice.
The form extends to $L\otimes \mathbb{Q}$ hence to
$L'$ too (via the natural inclusion
$L'\simeq\{l'\in L\otimes \mathbb{Q}\,:\, (l',L)\in \mathbb{Z}\}\subset  L\otimes \mathbb{Q}$).
The module $L'$ over $\mathbb{Z}$ is freely generated by the (anti-)duals $\{E_v^*\}_v$, where we
prefer the convention $( E_v^*,E_w) =  -1 $ for $v = w$, and
$0$ otherwise.

For $l'_1,l'_2\in L\otimes \mathbb{Q}$ with $l'_i=\sum_v l'_{iv}E_v$ for $i=\{1,2\}$ one writes $l'_1\geq l'_2$ if
$l'_{1v}\geq l'_{2v}$ for any $v\in\mathcal{V}$. In particular, $l'$ is effective if $l'\geq 0$. We set also
$\min\{l'_1,l'_2\}:= \sum_v\min\{l'_{1v},l'_{2v}\}E_v$.  Furthermore,
if $l'=\sum_v l'_v E_v$ then we write $|l'|:=\{v\in \mathcal{V}\,:\, l'_v\not=0\}$ for the
{\em support} of $l'$.

In several computations we need  the $E_v$--coefficient of  $E_u^*$; this equals
$-(E^*_u,E^*_v)=(-I^{-1})_{uv}$, which multiplied by $\det(\Gamma) $ is the determinant
of the subgraph obtained from $\Gamma$ by deleting the shortest path connecting $u$ and $v$ and the adjacent edges.

For more details on normal surface singularities and resolution graphs see eg. \cite{Nfive,NOSZ}.

\subsubsection{ Characteristic cycles}
The {\it canonical cycle} $K\in L'$ is defined by the
{\it adjunction formulae}
\begin{equation}\label{eq:adjun}
(K+E_v,E_v)+2=0 \ \ \ \mbox{ for all $v\in\mathcal{V}$.}
\end{equation}
Let $Char:=\{k\in L' \ : \ (k+l,l)\in 2\mathbb{Z} \ \ \mbox{for any} \ l\in L\}$ be the set of characteristic cycles in $L'$. By the adjunction formulae it can be written as $Char=K+2L'$. There is a natural action of $L$ on $Char$ given by $l\ast k:=k+2l$ whose orbits are of type $[k]:=k+2L$. Moreover, $H$ acts freely and transitively on the set of orbits by $[l']\ast [k]=[k+2l']$. Therefore, every orbit can be written in the form $[k]=K+2(l'+L)$ for some $l'\in L'$ with a fixed group element $[l']=h$ which indexes the orbit.
One has an identification between the orbits of $Char$ and the $spin^c$-structures of the link $M$,
see eg. \cite{NOSZ}.

Associated with any characteristic cycle $k$ we define the Riemann-Roch function
\begin{equation}\label{eq:RR}
\chi_k(l'):=-(k+l',l')/2 \ \ \ \mbox{for any } \ l'\in L'.
\end{equation}
For simplicity we will use the  notation $\chi:=\chi_K$. Notice that $\chi_k(l)$ ($l\in L$) by index
  (Riemann-Roch type) formula equals the  Euler characteristic of certain line bundles,
  see e.g. \cite[2.2.8]{Ngr}.

A topological type of singularity is called {\it numerically Gorenstein }
if $K\in L$ (this property does not depend on the choice of the resolution graph).
An analytic type of singularity is called {\it Gorenstein} if $K\in L$ and the sheaf of
holomorphic 2--forms $\Omega_{\widetilde{X}}$ is isomorphic with
$ {\mathcal O}  _{\widetilde{X}}(K)$.

\subsubsection{Minimal and distinguished cycles}\label{ss:mincyc}
We define the Lipman cone by $\mathcal{S}_{top}':=\{l'\in L' \ | \ (l',E_v)\leq 0 \ \mbox{for all} \ v\in \mathcal{V}\}$,
the semigroup (monoid) of  anti--nef rational cycles from $L'$.
 It is generated over $\mathbb{Z}_{\geq 0}$ by the cycles $E^*_v$.
Using the fact that the intersection form is negative definite (or the determinental characterizations
from \ref{ss:nsstop}) one shows that
all the entries of $E^*_v$ are strict positive. Hence $\mathcal{S}'_{top}$ sits in the first quadrant.
Define also  $\mathcal{S}_{top}:=L\cap \mathcal{S}'_{top}$,  
the semigroup (monoid) of anti-nef integral cycles,
the integral Lipman cone. By its very definition it is an affine semigroup.

  For any $h\in H$ there is a
 unique minimal element of $\{l'\in L' \ | \ [l']=h\}\cap \mathcal{S}'_{top}$ 
 (guaranteed eg.  by Lemma \ref{lem:cs2}), 
 which will be denoted by $s_h$. 
 Furthermore, we can also consider the semi-open cube $\{\sum_v l'_v E_v\in L' \ | \ 0\leq l'_v<1\}$
 which contains a unique representative $r_h$ for every $h$ so that $[r_h]=h$.
One has  $s_h\geq r_h$, however,  in general,
 $s_h \neq r_h$ (this can happen even for star--shaped resolution graphs,
 see eg. \cite[Ex. 4.5.4]{Ngr}).

The minimal cycle $s_h$ defines a distuinguished characteristic cycle $k_r:=K+2s_h$ of the corresponding orbit $[K+2s_h]$ indexed by $h\in H$.

Sometimes it is more convenient to use $Z_K:=-K\in L'$ instead of $K$, eg. if the resolution graph
is minimal then $Z_K\in \mathcal{S}'_{top}$ (use the adjunction formulae (\ref{eq:adjun})).

\subsubsection{Principal and anti-nef cycles}\label{ss:princc}
Any analytic function $f:(X,o)\to (\mathbb{C},0)$ determines an effective principal divisor $(\pi\circ f)=(f)_{\Gamma}+St(f)$, where $(f)_{\Gamma}$ is a cycle in $L_{>0}$ and $St(f)$ is supported by the strict transform of $\{f=0\}$. We define the set of principal cycles by
\begin{equation}\label{eq:san}
\mathcal{S}_{an}:=\{(f)_{\Gamma} \, : \, f\in \mathfrak{m}_{(X,o)}\}.\end{equation}
Then, in fact, $(f_{\Gamma}, E_v)\leq 0$ for any $v\in \mathcal{V}$, hence $\mathcal{S}_{an}$ is a subsemigroup of $\mathcal{S}_{top}$.
 If $(X, o)$ is rational (and $\pi$ is arbitrary), or minimally elliptic (and
$\pi$ is minimal and good) then  $\mathcal{S}_{an}=\mathcal{S}_{top}$, but in general
$\mathcal{S}_{an}\neq \mathcal{S}_{top}$, see eg. \cite{NNP}.

In general, for an arbitrary analytic singularity type,
it is very difficult problem to decide if an element of $\mathcal{S}_{top}$
is principal or not
(this is not a topological/combinatorial problem, the answer definitely might depend on the
choice of the analytic structure supported by the fixed resolution graph).

\subsection{Seifert 3--manifolds as singularity links}\label{ss:seifert}

\subsubsection{} The negative definite intersection form $I$ together with the collection of genera
 $\{g(E_v)\}_v$ can be coded in a connected  dual resolution graph $\Gamma$ as well.
 The link of the singularity can be recovered from the graph by plumbing construction. Furthermore,
 by a theorem of Grauert
 \cite{GRa}, any such connected negative definite graph can be realized as dual resolution graph
 of some analytic singularity. In this note we focus on star--shaped graphs, their associated
 plumbed 3--manifolds are Seifert 3--manifolds, and they can be analytically realized by weighted homogeneous singularities 
(however,   they  might be realized by many other analytic structures as well).
First we provide some details and notation regarding the combinatorics of the graph.

\subsubsection{Seifert 3--manifolds.}\label{sss:seifert}
Assume that a resolution  graph $\Gamma$ is star--shaped with $d$ legs, $d\geq 3$. Each leg is a chain determined by the normalized Seifert invariant $(\alpha_i,\omega_i)$,
where $0<\omega_i <\alpha_i$, gcd$(\alpha_i,\omega_i)=1$ as follows. If we consider the Hirzebruch/negative continued fraction expansion
$$ \alpha_i/\omega_i=[b_{i1},\ldots, b_{i\nu_i}]=
b_{i1}-1/(b_{i2}-1/(\cdots -1/b_{i\nu_i})\cdots )\ \  \ \ (b_{ij}\geq 2),$$
then the $i^{\mathrm{th}}$ leg has $\nu_i$ vertices, say $v_{i1},\ldots, v_{i\nu_i}$,
with Euler decorations (self--intersection numbers)
$-b_{i1},\ldots, -b_{i\nu_i}$, where $v_{i1}$ is connected by the central vertex denoted by $v_0$.
All these vertices have genus--decorations zero.
We also use $\omega_i'$ satisfying $\omega_i\omega_i'\equiv 1$ (mod $\alpha_i$), $0< \omega_i'<\alpha_i$. (One shows that  $\alpha_i$ is the determinant of the $i^{th}$--leg $\Gamma_i$,
$\omega_i=\det(\Gamma_i\setminus v_{i1})$, and  $\omega_i'=\det(\Gamma_i\setminus v_{i\nu_i})$.)

The central vertex $v_0$ has an Euler decoration
$-b_0$ and genus decoration $g$.

The plumbed 3--manifold $M$ associated with such a star--shaped graph $\Gamma$ has a Seifert structure and its associated normalized Seifert invariants is denoted by $$Sf=(-b_0,g;(\alpha_i,\omega_i)_{i=1}^d).$$ In the sequel we will assume that $M$ is a rational homology sphere, or, equivalently, the central node has $g=0$.

The orbifold Euler number of $M$ is defined as $e=-b_0+\sum_i\omega_i/\alpha_i$. The negative definiteness of the intersection form is equivalent with  $e<0$. We also write  $\alpha:=\mathrm{lcm}(\alpha_1,\ldots,\alpha_d)$.

Let $\mathfrak{h}:=|H|$ be the order of $H=H_1(M,\mathbb{Z})=L'/L$, and
let  $\mathfrak{o}$ be the order of the class of $E^*_0$ (or of
the generic $S^1$ Seifert--orbit)   in $H$
(in the plumbing construction, this orbit is the $S^1$--fiber over a generic point of $E_0$).
 Then one has (see eg. \cite{neumann.abel})
\begin{equation}\label{eq:sei2}
 \mathfrak{h}=\alpha_1\cdots\alpha_d|e|, \ \ \ \ \mathfrak{o}=\alpha|e|.
\end{equation}
If $M$ is an integral homology sphere (that is, Seifert homology sphere) then necessarily
all $\alpha_i$'s are pairwise relative prime and (cf. (\ref{eq:sei2})) $\alpha |e|=1$.
This reads as the Diophantine equation
$(b_0-\sum_i\omega_i/\alpha_i)\alpha=1$, hence all $\omega_i$'s and $b_0$
are determined uniquely by the $\alpha_i$'s.
The corresponding Seifert 3--manifold (link) will be denoted by $\Sigma(\alpha_1,\dots,\alpha_d)$.

\subsubsection{Weighted homogeneous surface singularities}\label{ss:WHS}
Let $(X,o)$ be a normal weighted homogeneous surface singularity. It is the germ at the origin of an affine variety $X$ with good $\mathbb{C}^*$-action, which means that its affine coordinate ring is $\mathbb{Z}_{\geq 0}$--graded: $R_X=\oplus_{\ell\geq 0} R_{X,\ell}$.
The dual graph of the  minimal good resolution is star--shaped, and the $\mathbb{C}^*$-action
of the singularity induces an $S^1$--Seifert action on the link. In particular,
the link $M$ of $(X,o)$ is a negative definite Seifert 3--manifold characterized by its
normalized Seifert invariants $Sf=(-b_0,g;(\alpha_i,\omega_i)_{i=1}^d)$.
It is known that weighted homogeneous surface singularity with numerically Gorenstein
topological type is Gorenstein (in particular, eg., combinatorial consequences of the Gorenstein
duality can be read as combinatorial symmetries of the numerically Gorenstein graph).

The formulae below are attributed to Dolgachev, Pinkham and Demazure, hence
by some authors are called Dolgachev--Pinkham--Demazure formulae; we will follow here Pinkham's
work \cite{Pinkham}.
By  \cite{Pinkham} the complex structure is completely recovered by the Seifert invariants and the configuration of points $\{P_i:=E_0\cap E_{i1}\}_{i=1}^d \subset E_0$.
(Here $E_0$ is the irreducible exceptional curve indexed by the central vertex $v_0$.)
In fact, the graded ring of he local
algebra of the singularity is given by the formula
$R_X=\oplus_{\ell\geq 0} R_{X,\ell}=\oplus_{\ell\geq 0}
H^0(E_0,\mathcal{O}_{E_0}(D^{(\ell)}))$ with $D^{(\ell)}:=\ell (-E_0|_{E_0})-\sum_{i=1}^d\lceil\ell\omega_i/\alpha_i\rceil P_i$, where for $r\in\mathbb{R}$ $\lceil r\rceil$ denotes the smallest integer greater than or equal to $r$.

In particular, when $M$ is a rational homology sphere, ie. $g=0$, one has $E_0\simeq \mathbb{P}^1$, hence Pinkham's formula implies that $\dim(R_{X,\ell})=\max\{0,1+N(\ell)\}$ is topological, where $N(\ell)$
is  the quasi-linear function
\begin{equation}\label{defN}
N(\ell):=\deg D^{(\ell)}=b_0\ell-\sum_{i=1}^d\Big\lceil \frac{\ell \omega_i}{\alpha_i}\Big\rceil.
\end{equation}
Since $-\lceil x\rceil \leq -x$ one obtains $N(\ell)\leq |e|\ell$,
hence  $N(\ell)<0$ for $\ell<0$.
Therefore, if we consider the set $\mathcal{S}_M:=\{\ell\in\mathbb{Z}_{\geq 0} | R_{X,\ell}\neq 0\}$, which is a numerical semigroup by the grading property of the algebra, by the above results it can be characterized topologically with the Seifert invariants  as follows
\begin{equation}\label{eq:sgptop}
\mathcal{S}_M=\{\ell\in\mathbb{Z} \ | \ N(\ell)\geq 0\}.
\end{equation}
It is called the {\it semigroup of  $(X,o)$,  or,
 of the Seifert rational homology sphere link $M$}.
In particular,  the Poincar\'e series $
P_0(t)=\sum \dim_{\mathbb{C}}(R_{X,\ell}) t^{\ell}$
of the graded local algebra is also topological, namely
\begin{equation}\label{eq:poincare}
P_0(t)=\sum_{\ell\geq 0}\max\{0,1+N(\ell)\}t^{\ell}.\end{equation}
Its support is exactly $\mathcal{S}_M$. For more (from both analytic and topological
points of view), see \cite{NOSZ,Nsplice} and \cite{LN}.
By a similar pattern, one can define the following series as well:
\begin{equation}\label{eq:polp}
P_0^+(t)=\sum_{\ell\geq 0} \max\{0, -1-N(\ell)\} t^{\ell}, \ \ \mbox{and}\ \
P_0^{neg}(t)=\sum_{\ell\geq 0}(1+N(\ell))t^{\ell}.
\end{equation}
$P_0^+$ has two interpretations. Firstly, since $E_0\simeq \mathbb{P}^1$, one has
$\max\{0, -1-N(\ell)\}=\dim\, H^1(E_0,\mathcal{O}_{E_0}(D^{(\ell)}))$, hence
$P^+_0(t)=\sum_{\ell\geq 0} h^1(E_0,\mathcal{O}_{E_0}(D^{(\ell)}))t^{\ell}$.
Since $N(\ell)$ asymptotically behaves like $|e|\ell$ (see also Prop \ref{prop:INEQ}
below), $P_0^+(t)$ is a polynomial.
By \cite{Pinkham}, $P^+_0(1)$ equals the geometric genus $p_g$ of the weighted homogeneous singularity.
From topological point of view,
 $P^+_0(1)$ equals the ($h=0$)--equivariant part of the normalized Seiberg--Witten invariant of the link (cf. \cite{NOSZ}).

It is clear that both $P_0$ and $P_0^+$ are determined from  $P_0^{neg}$. The point is that
both $P_0^+$ and $P_0^{neg}$ are also determined from the Poincar\'e series $P_0$. Indeed,
$P_0$ has a unique
`polynomial + negative degree part' decomposition into a sum of a polynomial and a rational function of negative degree, and in fact, this decomposition is exactly
  $P_0=P^+_0+P^{neg}_0$ \cite{BN,NO1} (for the more general
   multivariable case see  \cite{LN,LNN2}.
In particular, $P_0$ determined all the integers $\{N(\ell)\}_{\ell\geq 0}$ and $p_g$ as well.

The entries $N(\ell)+1$ in the Poincar\'e series suggest to define
for any Seifert rational homology sphere the set
\begin{equation}\label{eq:defM}
\mathcal{M}=\mathcal{M}_M:=\{\ell\in \mathbb{Z} \ | \ 1+N(\ell)\geq 0\}.
\end{equation}
Clearly $\mathcal{S}_M\subset \mathcal{M}_M$. Notice that, in general,  $\mathcal{M}$ is not a numerical semigroup, consider eg. the Seifert manifold $\Sigma(2,3,7)$ with Seifert invariants
 $(-1; (2,1),(3,1),(7,1))$, when $-1\in \mathcal{M}$ but $-84\not\in \mathcal{M}$.
 However, it is a module over the semigroup $\mathcal{S}_M$ (if $\ell_1\in \mathcal{S}_M$ and
 $\ell_2\in \mathcal{M}_M$ then $\ell_1+\ell_2\in \mathcal{M}_M$).
Again by the definition of $N(\ell)$ one has
$\ell\in\mathcal{M}_M$ for any $\ell\geq (d-1)/|e|$ and $N(\ell)<-1$ for $\ell\ll 0$. 
Hence, the set  $\mathbb{Z}\setminus\mathcal{M}$ is bounded from above
and we can define the Frobenius number $f_{\mathcal{M}}$ of $\mathcal{M}$ as the largest integer not in $\mathcal{M}$. Similarly, $\mathcal{M}$ is bounded from below, thus one can define $\min\{\mathcal{M}\}$ to be the smallest integer in the module $\mathcal{M}$.

If $N(\ell)=-1$ then $\ell$ is neither in the support of $P_0(t)$ nor in the support of
$P_0^+(t)$. However, this set (non--coded by these series) is important (eg. from the point of view of
sharp cohomology vanishing). It is exactly the difference between the supports of $\mathcal{M}_M$ and
$\mathcal{S}_M$.

Furthermore, $\mathbb{Z}_{\geq 0}\setminus \mathcal{M}$ is exactly the support of $P_0^+(t)$,
hence 
\begin{equation}\label{rational}
(X,o)\ \mbox{is rational} \ \Leftrightarrow \ p_g=0\ \Leftrightarrow\ 
P_0^+\equiv0\ \Leftrightarrow
\ \mathbb{Z}_{\geq 0}\subset  \mathcal{M}.
\end{equation}

\subsubsection{The combinatorial number $\gamma$}\label{ss:gamma}
Before we recall certain known facts regarding the integers $\{N(\ell)\}_{\ell}$
(and certain consequences regarding $\mathcal{S}_M$ and $\mathcal{M}_M$)
we define a key numerical invariant:
\begin{equation}\label{eq:gamma}
\gamma:=\frac{1}{|e|}\cdot \Big( d-2-\sum_{i=1}^d \frac{1}{\alpha_i}\Big)\in \mathbb{Q},
\end{equation}
which has a central importance  regarding properties of
weighted homogeneous surface singularities or Seifert rational homology spheres.
It has several interpretations.

First of all, the adjunction formula gives (in any graph) the identity
\begin{equation}\label{eq:ZK}
Z_K=E+\sum_{v}(\delta_v-2)E^*_v,
\end{equation}
where  $E:=\sum_vE_v$. In a star--shaped graph the $E_0$ coefficients of all
 $E^*_v$ associated with end--vertices   are computed by $-(E_v^*,E^*_0)=1/(|e|\alpha_v)$ and the
 $E_0$ coefficients of
 $E^*_0$ is  $-(E_0^*,E^*_0)=1/|e|$
 (cf. \cite[(11.1)]{NOSZ}, or the determinant--formula  from \ref{ss:nsstop}).
 Hence,  the $E_0$-coefficient of $Z_K$ is exactly $\gamma+1$.

The number $-\gamma$ is called the `log discrepancy' of $E_0$, $\gamma$ the `exponent' of the weighted
homogeneous germ $(X,o)$, and $\mathfrak{o}\gamma$ is the Goto--Watanabe $a$--invariant (cf. \cite[(3.1.4)]{GW}) of the universal abelian
covering of $(X,o)$.
In \cite{neumann.abel} $e\gamma$ appears as the orbifold Euler characteristic (see also
\cite[3.3.6]{NOem}).

\subsubsection{More on $Z_K$}\label{sss:more}

If $(X,o)$ is $ADE$, then $Z_K=0$ and $\gamma=-1$.
By \cite{CR}, $\gamma|e|=d-2-\sum_i 1/\alpha_i$ is negative if and only of $\pi_1(M)$ is finite (this
can happen only if $d=3$ and $\sum_i1/\alpha_i>1$). In this case, $(X,o)$ is a quotient singularity,
hence rational.  Thus, in this case $P^+_0(t)\equiv 0$ too.

Therefore, if $(X,o)$ is not rational then $\gamma\geq 0$, that is, the $E_0$--coefficient of $Z_K$ is
$\geq 1$.
We claim that in these cases all the coefficients of $Z_K$ are strict positive.
(Indeed, $Z_K$ restricted on any leg, say $Z_K|_i$,  by adjunction relations satisfies
$(Z_K|_i, E_{ik})\leq 0$, $1\leq k\leq \nu_i$, with strict inequality for $k=1$, hence
all the coefficients of $Z_K|_i$ are strict positive.) In particular, we also have $Z_K\geq
r_{[Z_K]}$.

Let us rename  the $i^{th}$ end--vertex by $E_i$ and let us compute the $E_i$--coefficient of $Z_K$.

 For these one needs
for end--vertices $i,j$ the identities $(E_i^*,E_j^*)=(e\alpha_i\alpha_j)^{-1}$ for $i\not=j$ and
$(E_i^*,E_i^*)=(e\alpha_i^2)^{-1}-\omega'_i/\alpha_i$ if $i=j$;
 cf. \cite[(11.1)]{NOSZ}, or the determinant--formula  from \ref{ss:nsstop}. Therefore, by
 (\ref{eq:ZK}) and a computation,
 \begin{equation}\label{eq:ZKend}
 -(Z_K,E^*_i)=1+(\gamma-\omega_i')/\alpha_i.
 \end{equation}
Hence, if $\Gamma$ is numerically Gorenstein (that is, $Z_K\in L$) then
$\gamma\in \mathbb{Z}$ and $\gamma\equiv\omega_i'\ ({\rm mod}\ \alpha_i)$ for all $i$.
Hence,  in the numerically Gorenstein non--$ADE$ case --- when we already know that $\gamma\geq 0$ ---
by this congruence we get the stronger $\gamma\geq 1$. (If $\gamma=1$ and $\Gamma$ is
minimally Gorenstein
then $\omega_i'=\omega_i=1$ and $b_0=d-2$, hence $(X,o)$ is the `polygonal'
 minimally elliptic singularity with $Z_K=E+E_0$.)

\subsubsection{Some inequalities satisfied by $\{N(\ell)\}_{\ell}$.}
The following facts are proved eg. in \cite[Sections 3.1--3.3]{NOem} (some of them can be
verified by direct computations).

\begin{prop}\label{prop:INEQ} \

(a) \ $-(\alpha-1)|e|-d\leq N(\ell)-\lceil \ell/\alpha\rceil \alpha |e| \leq -1$. In particular
$\lim_{\ell\to\infty}N(\ell)=\infty$.

(b) \ If $\ell>\gamma$ then $h^1(E_0,\mathcal{O}_{E_0}(D^{(\ell)}))=0$, ie. $N(\ell)\geq -1$.
In particular, $\deg P_0^+(t)\leq \max\{0,\gamma\}$.

(c) \ If $(X,o)$ is numerically Gorenstein (but not $ADE$) then $N(\gamma)=-2$ and  $\deg P_0^+(t)= \gamma$.

(d) \ $N(\alpha)=\alpha(b_0-\sum_i\omega_i/\alpha_i)=\alpha |e|=\mathfrak{o}>0$.

(e) \ $N(\ell+\alpha)=N(\ell)+N(\alpha)=N(\ell)+\mathfrak{o}>N(\ell)$ for any $ \ell\geq 0$.

(f) \ $N(\ell)\geq 0$ for any $\ell>\alpha+\gamma$.
\end{prop}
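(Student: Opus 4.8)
The plan is to verify each item essentially by unwinding the definition $N(\ell)=b_0\ell-\sum_{i=1}^d\lceil \ell\omega_i/\alpha_i\rceil$ from \eqref{defN} and exploiting the elementary identity $\lceil x\rceil = x + \{-x\}$, where $\{\cdot\}$ denotes the fractional part, together with periodicity mod $\alpha$.

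\textbf{Step 1 (items (a), (d), (e)).} Write $\lceil \ell\omega_i/\alpha_i\rceil = \ell\omega_i/\alpha_i + \epsilon_i(\ell)$ with $\epsilon_i(\ell)\in[0,1)$ and $\epsilon_i(\ell)=0$ iff $\alpha_i\mid \ell$; note $\epsilon_i(\ell)\in\{0\}\cup[1/\alpha_i,\,1-1/\alpha_i]$ whenever $\alpha_i\nmid\ell$. Then
\begin{equation*}
N(\ell) = \Big(b_0-\sum_i\omega_i/\alpha_i\Big)\ell - \sum_i\epsilon_i(\ell) = |e|\,\ell - \sum_i\epsilon_i(\ell).
\end{equation*}
Since $\alpha=\mathrm{lcm}(\alpha_i)$, all $\epsilon_i$ are periodic with period $\alpha$, which gives $N(\ell+\alpha)-N(\ell)=|e|\alpha=\mathfrak{o}$, hence (e); and evaluating at $\ell=\alpha$, where every $\epsilon_i(\alpha)=0$, gives $N(\alpha)=|e|\alpha=\mathfrak{o}=\alpha|e|>0$ by negative definiteness ($e<0$), which is (d). For (a): write $\ell = \lceil\ell/\alpha\rceil\alpha - r$ with $0\le r\le \alpha-1$; then $N(\ell)=\lceil\ell/\alpha\rceil\alpha|e| - \big(|e|r + \sum_i\epsilon_i(\ell)\big)$, so I must bound $|e|r+\sum_i\epsilon_i(\ell)$ from above by $(\alpha-1)|e|+d$ (immediate, since $r\le\alpha-1$ and each $\epsilon_i<1$) and from below by $1$. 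The lower bound $|e|r+\sum_i\epsilon_i(\ell)\ge 1$ when $\ell$ is not a multiple of $\alpha$ is the one slightly delicate point: if $r\ge 1$ it is clear since $|e|\cdot 1 = |e|$... but $|e|$ can be less than $1$, so one actually needs: if some $\alpha_i\nmid\ell$ then $\epsilon_i(\ell)\ge 1/\alpha_i$, and combined with the $|e|r$ term and the precise arithmetic of $e=-b_0+\sum\omega_i/\alpha_i$ one checks $|e|r+\sum_i\epsilon_i(\ell)$ is a positive element of $\frac1{\alpha_1\cdots\alpha_d}\mathbb Z$ hence $\ge$ a positive quantity; since $N(\ell)$ itself must be an integer whenever... — actually the cleanest route is that $N(\ell)\in\mathbb Z$ always, and $N(\ell)\le |e|\ell < \lceil\ell/\alpha\rceil\alpha|e|$ when $\alpha\nmid\ell$ is not automatic, so here one invokes $N(\ell)\le |e|\ell$ from $-\lceil x\rceil\le -x$ together with $\lceil\ell/\alpha\rceil\alpha|e| - N(\ell)\in\mathbb Z_{>0}$, giving $\ge 1$. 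I expect \textbf{this integrality-plus-sign argument to be the main obstacle}, as it requires care that $\lceil\ell/\alpha\rceil\alpha|e|=N$ of a multiple of $\alpha$ is an integer (true by (d) and (e)) and that the difference is strictly positive.

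\textbf{Step 2 (items (b), (c)).} Recall $h^1(E_0,\mathcal O_{E_0}(D^{(\ell)}))=\max\{0,-1-N(\ell)\}$ since $E_0\simeq\mathbb P^1$. So (b) is the statement $N(\ell)\ge -1$ for $\ell>\gamma$. Using $N(\ell)=|e|\ell-\sum_i\epsilon_i(\ell)$ and $\sum_i\epsilon_i(\ell)<\sum_i(1-1/\alpha_i)\le d-\sum_i 1/\alpha_i$ (crudely $\le d$, but the sharp bound uses $\epsilon_i\le 1-1/\alpha_i$ unless $\alpha_i\mid\ell$), one gets $N(\ell) > |e|\ell - (d-\sum_i 1/\alpha_i)$; plugging $\ell>\gamma=\frac1{|e|}(d-2-\sum_i 1/\alpha_i)$ yields $N(\ell)>-2$, hence $N(\ell)\ge -1$ since $N(\ell)\in\mathbb Z$. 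The bound $\deg P_0^+\le\max\{0,\gamma\}$ follows. For (c), numerically Gorenstein means $Z_K\in L$, and by \ref{sss:more} the $E_0$-coefficient of $Z_K$ is $\gamma+1\in\mathbb Z$, so $\gamma\in\mathbb Z_{\ge 1}$ (non-$ADE$). I would then identify $-1-N(\gamma)$ with $h^1(E_0,\mathcal O_{E_0}(D^{(\gamma)}))$ and relate $D^{(\gamma)}$ to the restriction of $Z_K$ (equivalently to $K$) via Pinkham's formula and the adjunction identity \eqref{eq:ZK}: the point is that $D^{(\gamma)}$ is, up to the contribution of the legs already accounted for in $\gamma$, the canonical divisor on $E_0\simeq\mathbb P^1$ of degree $-2$, forcing $N(\gamma)=-2$ and then $\deg P_0^+=\gamma$ since $N(\gamma+1)\ge -1$ by (b). This is the one step where I genuinely lean on the Gorenstein symmetry of the graph rather than pure arithmetic.

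\textbf{Step 3 (item (f)).} Combine (a) and (d)–(e): for $\ell>\alpha+\gamma$ write $\ell=\ell_0+m\alpha$ with $m\ge 1$ and $0\le\ell_0<\alpha$; if additionally $\ell_0>\gamma$ then $N(\ell_0)\ge -1$ by (b), so $N(\ell)=N(\ell_0)+m\mathfrak o\ge -1+\mathfrak o\ge 0$ using $\mathfrak o=\alpha|e|\ge 1$. If instead $\ell_0\le\gamma$ then because $\ell>\alpha+\gamma\ge\alpha+\ell_0$ we must have $m\ge 2$, and $N(\ell)=N(\ell_0)+m\mathfrak o\ge N(\ell_0)+2\mathfrak o$; since $N(\ell_0)\ge -(\alpha-1)|e|-d$ by (a) (the $\lceil\ell_0/\alpha\rceil=1$ case contributes $\alpha|e|=\mathfrak o$, so in fact $N(\ell_0)\ge \mathfrak o -(\alpha-1)|e|-d = |e|-d$... one may need a slightly more careful bookkeeping here), adding $2\mathfrak o$ and using $\mathfrak o\ge\alpha|e|$ with $\gamma\ge 0$ forces $N(\ell)\ge 0$. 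The inequality $\ell>\alpha+\gamma$ is exactly what makes the "extra" copy of $\mathfrak o$ available to absorb the most negative value of $N$ on a fundamental domain $[0,\alpha)$, which by (a) is bounded below in terms of $\alpha|e|$ and $d$; I would match the constants precisely to close the estimate.
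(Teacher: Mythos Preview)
The paper does not actually prove this proposition: it only cites \cite[Sections 3.1--3.3]{NOem} and remarks that ``some of them can be verified by direct computations.'' So there is no approach to compare yours against; you are supplying the direct computation the paper alludes to. Your treatment of (d), (e), and (b) is correct and is exactly the expected argument. Three points deserve comment.

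\emph{Part (a), upper bound.} Your integrality argument (that $N(\ell)\le |e|\ell$ with strict inequality when $\alpha\nmid\ell$, and that both $N(\ell)$ and $\lceil\ell/\alpha\rceil\mathfrak{o}$ are integers) is correct and gives $N(\ell)-\lceil\ell/\alpha\rceil\alpha|e|\le -1$ precisely when $\alpha\nmid\ell$. At multiples of $\alpha$ the difference equals $0$, so the inequality as printed is off by one there; this is a harmless quirk of the statement (only the lower bound and the limit are ever used), and you were right to flag it as the delicate point rather than to force an argument that cannot exist.

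\emph{Part (c).} Your sketch via $D^{(\gamma)}$ and adjunction is pointing at the right structure but is vaguer than necessary. The clean direct computation uses the numerically Gorenstein congruence: by \eqref{eq:ZKend} one has $\gamma\equiv\omega_i'\pmod{\alpha_i}$, hence $\omega_i\gamma\equiv 1\pmod{\alpha_i}$, so $\lceil\gamma\omega_i/\alpha_i\rceil=(\gamma\omega_i-1)/\alpha_i+1$. Plugging this into $N(\gamma)$ gives $N(\gamma)=|e|\gamma-d+\sum_i 1/\alpha_i=(d-2-\sum_i 1/\alpha_i)-d+\sum_i 1/\alpha_i=-2$ in one line; your argument would benefit from making this explicit.

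\emph{Part (f).} Your Step 3 case split is unnecessarily convoluted and leaves constants unmatched. The intended one--line proof is: if $\ell>\alpha+\gamma$ then $\ell-\alpha>\gamma$, so $N(\ell-\alpha)\ge -1$ by (b), and then $N(\ell)=N(\ell-\alpha)+\mathfrak{o}\ge -1+1=0$ by (e) (the periodicity identity $N(\ell+\alpha)=N(\ell)+\mathfrak{o}$ in fact holds for all $\ell\in\mathbb{Z}$, not just $\ell\ge 0$). Replacing your Step 3 with this removes the loose bookkeeping you yourself flagged.
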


\subsubsection{The numerically Gorenstein case}\label{sss:NGrev}

Proposition \ref{prop:INEQ} implies the following.

\begin{cor}\label{cor:frob} \  Assume that $\Gamma$ is
numerically Gorenstein non--$ADE$.

(a) Then $f_{\mathcal{M}_M}=\gamma$.

(b) Assume additionally that $\mathfrak{o}=1$ (eg. $M$ is an integral homology sphere). Then $f_{\mathcal{S}_M}=\alpha+\gamma$.
\end{cor}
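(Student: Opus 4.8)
The plan is to derive both parts of Corollary \ref{cor:frob} directly from the inequalities collected in Proposition \ref{prop:INEQ}, together with the description of the canonical cycle via $\gamma$ in \S\ref{sss:more}. Recall that $\mathcal{M}_M = \{\ell\in\mathbb{Z}\ :\ 1+N(\ell)\geq 0\}$ and $\mathcal{S}_M=\{\ell\in\mathbb{Z}\ :\ N(\ell)\geq 0\}$, so in both cases the Frobenius number is the largest $\ell$ violating the respective inequality, and the task is to locate that $\ell$ exactly.

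For part (a), I would first show $\gamma\notin\mathcal{M}_M$: by Proposition \ref{prop:INEQ}(c), in the numerically Gorenstein non-$ADE$ case one has $N(\gamma)=-2$, so $1+N(\gamma)=-1<0$ and hence $\gamma\notin\mathcal{M}_M$. (Note $\gamma\in\mathbb{Z}$ here by \S\ref{sss:more}, so this is a legitimate test value, and $\gamma\geq 1$ in this case.) Then I would show every integer $\ell>\gamma$ lies in $\mathcal{M}_M$: this is exactly Proposition \ref{prop:INEQ}(b), which gives $N(\ell)\geq -1$, i.e. $1+N(\ell)\geq 0$, for $\ell>\gamma$. Combining these two facts gives $f_{\mathcal{M}_M}=\gamma$.

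For part (b), assume additionally $\mathfrak{o}=1$. I would use the translation identity $N(\ell+\alpha)=N(\ell)+\mathfrak{o}=N(\ell)+1$ from Proposition \ref{prop:INEQ}(e), valid for $\ell\geq 0$, to transfer information about $\mathcal{M}_M$ into information about $\mathcal{S}_M$. Concretely, for $\ell\geq 0$ we have $\ell+\alpha\in\mathcal{S}_M \iff N(\ell+\alpha)\geq 0 \iff N(\ell)\geq -1 \iff 1+N(\ell)\geq 0 \iff \ell\in\mathcal{M}_M$. Applying this with $\ell=\gamma$ (which is $\geq 1>0$) and using part (a): since $\gamma\notin\mathcal{M}_M$ we get $\alpha+\gamma\notin\mathcal{S}_M$; and for any integer $m>\alpha+\gamma$, writing $m=\ell+\alpha$ with $\ell=m-\alpha>\gamma\geq 0$, part (a) gives $\ell\in\mathcal{M}_M$, hence $m\in\mathcal{S}_M$. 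It remains only to check that no integer $m$ in the range $0\leq m\leq\alpha+\gamma$ that could conceivably exceed $\alpha+\gamma$ — which is vacuous — and that the translation argument indeed covers all $m>\alpha+\gamma$ (it does, since then $m-\alpha>\gamma\geq 1>0$). Thus $f_{\mathcal{S}_M}=\alpha+\gamma$.

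I expect the only subtle point — and the main thing to be careful about — is the bookkeeping on which values of $\ell$ the hypotheses of Proposition \ref{prop:INEQ}(b) and (e) actually apply to, and the fact that we genuinely need $\gamma\geq 0$ (indeed $\gamma\geq 1$) so that the shift $\ell=\gamma\geq 0$ is legitimate in (e); this is guaranteed by the non-$ADE$ numerically Gorenstein assumption via the congruence $\gamma\equiv\omega_i'\pmod{\alpha_i}$ discussed in \S\ref{sss:more}. Everything else is a direct unwinding of definitions, so there is no serious obstacle once Proposition \ref{prop:INEQ} is in hand; the corollary is essentially a corollary.
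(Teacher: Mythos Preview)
Your proof is correct and follows essentially the same approach as the paper: for (a) you use Proposition \ref{prop:INEQ}(b)--(c) exactly as the paper does, and for (b) you use (e) together with $N(\gamma)=-2$ to show $\alpha+\gamma\notin\mathcal{S}_M$ and then (e) combined with part (a) to show every larger integer is in $\mathcal{S}_M$. The paper's only cosmetic difference is that it cites Proposition \ref{prop:INEQ}(f) directly for the latter step rather than re-deriving it from (e) and (b), which is the same argument packaged differently.
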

\begin{proof}
{\it (a)} Use {\it (b)--(c)} of Prop. \ref{prop:INEQ}. For {\it (b)}
 use {\it (e)--(f)} of Prop. \ref{prop:INEQ} and $N(\gamma)=-2$. 
\end{proof}

The following numerical symmetry is the trace of the Gorenstein symmetry.
In the case when $M$ is an integral homology sphere the statement was
proved in  \cite{CK} too. For a generalization here see Proposition \ref{prop:chidual}.
\begin{prop}\label{prop:SYM} If the graph is numerically Gorenstein  (that is, $Z_K\in L$),
 then
\begin{equation}\label{SHSsym}
N(\ell)+N(\gamma-\ell)=-2 \ \ \mbox{for any} \ \ell\in\mathbb{Z},
\end{equation}
\end{prop}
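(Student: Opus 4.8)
The plan is to prove the symmetry $N(\ell)+N(\gamma-\ell)=-2$ by a direct computation with the explicit formula \eqref{defN} for $N(\ell)$, exploiting the numerical Gorenstein hypothesis in the guise of the congruences $\gamma\equiv\omega_i'\pmod{\alpha_i}$ derived in \ref{sss:more} from \eqref{eq:ZKend}. First I would write
\[
N(\ell)+N(\gamma-\ell)=b_0\gamma-\sum_{i=1}^d\Big(\Big\lceil\tfrac{\ell\omega_i}{\alpha_i}\Big\rceil+\Big\lceil\tfrac{(\gamma-\ell)\omega_i}{\alpha_i}\Big\rceil\Big),
\]
so the whole statement reduces to understanding each summand $\lceil\ell\omega_i/\alpha_i\rceil+\lceil(\gamma-\ell)\omega_i/\alpha_i\rceil$. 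The key elementary fact is that for integers $a,m$ with $m>0$ one has $\lceil a/m\rceil+\lceil (c-a)/m\rceil = \lceil c/m\rceil$ when $m\mid c$, and equals $\lceil c/m\rceil+1$ otherwise; here I would apply it with $c=\gamma\omega_i$ and $m=\alpha_i$. Since $\gcd(\omega_i,\alpha_i)=1$, we have $\alpha_i\mid\gamma\omega_i$ iff $\alpha_i\mid\gamma$, which by the congruence $\gamma\equiv\omega_i'\pmod{\alpha_i}$ (and $0<\omega_i'<\alpha_i$) fails for every $i$ — unless we are in a degenerate situation, which is exactly the $ADE$ case excluded (there $\gamma=-1$ and the legs are trivial, or rather $\omega_i'$ can be $0$); so for the numerically Gorenstein non-$ADE$ legs each summand contributes $\lceil\gamma\omega_i/\alpha_i\rceil+1$.

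Plugging this in gives
\[
N(\ell)+N(\gamma-\ell)=b_0\gamma-\sum_{i=1}^d\Big\lceil\tfrac{\gamma\omega_i}{\alpha_i}\Big\rceil-d = N(\gamma)-d,
\]
so I would then need the single evaluation $N(\gamma)=-2+d$? That is not what Prop.~\ref{prop:INEQ}(c) says — it says $N(\gamma)=-2$. So instead the cleaner route is: the above display already shows $N(\ell)+N(\gamma-\ell)$ is independent of $\ell$, being a constant equal to $b_0\gamma-\sum_i(\lceil\gamma\omega_i/\alpha_i\rceil+1)$; evaluate this constant by setting $\ell=0$, using $N(0)=0$ and the formula for $N(\gamma)$. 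Concretely, with $\ell=0$ the identity must read $N(0)+N(\gamma)=\text{(the constant)}$, i.e. the constant is $0+N(\gamma)=-2$ by Prop.~\ref{prop:INEQ}(c). Hence $N(\ell)+N(\gamma-\ell)=-2$ for all $\ell\in\mathbb{Z}$, which is the claim. The one subtlety to verify is that the ceiling-addition identity genuinely gives a constant (i.e. the error term is $+1$ for every $i$ regardless of $\ell$), which is where the non-divisibility $\alpha_i\nmid\gamma$ — equivalently $\omega_i'\not\equiv 0$, true since $0<\omega_i'<\alpha_i$ — is used; the $ADE$ exclusion guarantees the legs are nonempty so that these $\alpha_i>1$ and the congruence analysis is nonvacuous.

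The main obstacle I anticipate is not the algebra but getting the ceiling identity exactly right at the boundary: one must be careful that $\lceil x\rceil+\lceil y\rceil$ where $x+y=c\in\mathbb{Z}$ equals $c+1$ precisely when $x\notin\mathbb{Z}$ and equals $c$ when $x\in\mathbb{Z}$, and then that $\ell\omega_i/\alpha_i\in\mathbb{Z}$ iff $\alpha_i\mid\ell$ — so for those finitely many residues $\ell\equiv 0\pmod{\alpha_i}$ the $i$-th summand drops by one, but simultaneously $(\gamma-\ell)\omega_i/\alpha_i$ is then $\equiv\gamma\omega_i/\alpha_i\not\in\mathbb{Z}$, so the pair still sums to $\lceil\gamma\omega_i/\alpha_i\rceil+1$; i.e. one should phrase the identity as "$\lceil a/m\rceil+\lceil(c-a)/m\rceil=\lceil c/m\rceil+1$ whenever $m\nmid c$, for \emph{all} integers $a$", which is the form actually needed and is uniform in $a$. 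Once that lemma is stated cleanly, the proof is a two-line substitution plus the evaluation at $\ell=0$ via Prop.~\ref{prop:INEQ}(c). An alternative, more conceptual proof would deduce \eqref{SHSsym} from Serre/Gorenstein duality $h^1(\mathcal{O}_{E_0}(D^{(\ell)}))=h^0(\mathcal{O}_{E_0}(K_{E_0}-D^{(\ell)}))$ together with the identification of $K_{E_0}-D^{(\ell)}$ with $D^{(\gamma-\ell)}$ coming from the canonical cycle computation $Z_K=E+\sum(\delta_v-2)E_v^*$ and $-(Z_K,E_0^*)=\gamma+1$; this is essentially the "trace of the Gorenstein symmetry" remark, but for a self-contained write-up the combinatorial computation above is shorter and I would present that.
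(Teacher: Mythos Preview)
Your strategy --- direct computation with \eqref{defN} using the Gorenstein congruence $\gamma\equiv\omega_i'\pmod{\alpha_i}$ from \eqref{eq:ZKend} --- is exactly the paper's. But the ceiling lemma you rely on is false: it is \emph{not} true that $\lceil a/m\rceil+\lceil(c-a)/m\rceil=\lceil c/m\rceil+1$ for all integers $a$ whenever $m\nmid c$. Take $m=3$, $c=1$, $a=0$: the left side is $0+1=1$, the right side is $2$. In general this sum equals $\lceil c/m\rceil$ or $\lceil c/m\rceil+1$ depending on the residue of $a$ modulo $m$, so your claimed uniformity (and hence the constancy of $N(\ell)+N(\gamma-\ell)$ as you argue it) does not follow. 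Your patch in the ``obstacle'' paragraph does not help either: when $\alpha_i\mid\ell$ the $i$-th summand is $\lceil\gamma\omega_i/\alpha_i\rceil$, not $\lceil\gamma\omega_i/\alpha_i\rceil+1$.

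What actually makes the sum constant is not the mere non-divisibility $\alpha_i\nmid\gamma\omega_i$ but the \emph{precise} residue $\gamma\omega_i\equiv 1\pmod{\alpha_i}$ (which is equivalent to your $\gamma\equiv\omega_i'$). This is how the paper proceeds: since $(\gamma\omega_i-1)/\alpha_i\in\mathbb{Z}$, one has
\[
\Big\lceil\frac{(\gamma-\ell)\omega_i}{\alpha_i}\Big\rceil=\frac{\gamma\omega_i-1}{\alpha_i}+\Big\lceil\frac{1-\ell\omega_i}{\alpha_i}\Big\rceil,
\]
and then the correct and genuinely uniform identity $\lceil x/m\rceil+\lceil(1-x)/m\rceil=1$ (valid for every $x\in\mathbb{Z}$) finishes the job. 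With this fix one gets
\[
N(\ell)+N(\gamma-\ell)=b_0\gamma-\sum_i\frac{\gamma\omega_i-1}{\alpha_i}-d=\gamma|e|-\Big(d-2-\sum_i\frac{1}{\alpha_i}\Big)-2=-2
\]
directly, so the detour via $\ell=0$ and Proposition~\ref{prop:INEQ}{\it (c)} is unnecessary (and mildly circular, since that item is proved by essentially this computation). The $ADE$ exclusion you invoke is likewise not needed: the congruence $\gamma\omega_i\equiv 1\pmod{\alpha_i}$ holds for any numerically Gorenstein star-shaped graph with $d\geq 3$ legs, and that is all the argument uses.
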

\begin{proof}
 From (\ref{eq:ZKend}) we have  $\gamma\equiv \omega_i'\ ({\rm mod}\ \alpha_i)$,
 hence $\omega_i\gamma\equiv \omega_i\omega_i'\equiv 1\ ({\rm mod}\ \alpha_i)$ too.
 Then
 $$N(\gamma-\ell)=b_0(\gamma-\ell) -\sum_i \frac{\omega_i\gamma-1}{\alpha_i}+\sum_i \Big\lceil
 \frac{1-\omega_i\ell}{\alpha_i}\Big\rceil.$$
 Then using $\lceil (1-x)/\alpha_i\rceil+\lceil x/\alpha_i\rceil=1$ $(x\in \mathbb{Z})$
 the statement follows.
\end{proof}

This combined with Corollary \ref{cor:frob}{\it (b)} gives
\begin{cor}\label{cor:SMsym}
 If the graph is numerically Gorenstein non-$ADE$ then
\begin{equation}\label{eq:SymSM}\begin{split}
\ell\in \mathcal{M}_M \ &\Leftrightarrow \ \gamma-\ell\not \in \mathcal{S}_M\\
\{\ell\,:\, N(\ell)=-1\}&= \{\ell\,:\, \gamma-\ell\not\in\mathcal{S}_M,
\ \ell\not\in\mathcal{S}_M\}=\mathbb{Z}\setminus \Big((\gamma-\mathcal{S}_M)\cup\mathcal{S}_M\Big).
\end{split}\end{equation}
In particular, $\min\{\mathcal{M}_M\}+f_{\mathcal{S}_M}=\gamma$.
This,  via Corollary \ref{cor:frob}{\it (b)} for
an integral homology sphere $M$ reads as $\min\{\mathcal{M}_M\}=-\alpha$.
\end{cor}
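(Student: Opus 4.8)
The plan is to read off the entire statement from the numerical symmetry of Proposition~\ref{prop:SYM}: under the numerically Gorenstein hypothesis one has $N(\ell)+N(\gamma-\ell)=-2$ for every $\ell\in\mathbb{Z}$, and this, together with the definitions of $\mathcal{S}_M$ and $\mathcal{M}_M$ and (only at the very end) Corollary~\ref{cor:frob}, is all that is needed. First I would prove the basic equivalence $\ell\in\mathcal{M}_M\Leftrightarrow\gamma-\ell\notin\mathcal{S}_M$: by definition $\ell\in\mathcal{M}_M$ means $N(\ell)\ge-1$, and by the symmetry this is equivalent to $N(\gamma-\ell)=-2-N(\ell)\le-1$, i.e.\ to $N(\gamma-\ell)<0$, i.e.\ to $\gamma-\ell\notin\mathcal{S}_M$. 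Since $\ell\in\gamma-\mathcal{S}_M$ precisely when $\gamma-\ell\in\mathcal{S}_M$, complementing yields the cleaner reformulation $\mathbb{Z}\setminus\mathcal{M}_M=\gamma-\mathcal{S}_M$ of the first displayed line.

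Next I would treat the description of $\{\ell:N(\ell)=-1\}$. If $N(\ell)=-1$ then by the symmetry $N(\gamma-\ell)=-1$ as well, so neither $\ell$ nor $\gamma-\ell$ lies in $\mathcal{S}_M$; conversely, if $\ell\notin\mathcal{S}_M$ and $\gamma-\ell\notin\mathcal{S}_M$ then $N(\ell)\le-1$ and $N(\gamma-\ell)\le-1$, and since their sum is $-2$ each equals $-1$. Hence $\{\ell:N(\ell)=-1\}=\{\ell:\ell\notin\mathcal{S}_M,\ \gamma-\ell\notin\mathcal{S}_M\}$, and translating ``$\gamma-\ell\notin\mathcal{S}_M$'' into ``$\ell\notin\gamma-\mathcal{S}_M$'' rewrites this as $\mathbb{Z}\setminus\big((\gamma-\mathcal{S}_M)\cup\mathcal{S}_M\big)$.

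For the numerical consequence I would again use the first equivalence: $\min\{\mathcal{M}_M\}$ is the smallest $\ell$ for which $\gamma-\ell\notin\mathcal{S}_M$, and since $\ell\mapsto\gamma-\ell$ reverses order, this minimum is attained exactly when $\gamma-\ell$ is the \emph{largest} integer outside $\mathcal{S}_M$. Here one needs that $\mathcal{S}_M$ is a genuine numerical semigroup, contained in $\mathbb{Z}_{\ge0}$ and cofinite there (the latter e.g.\ from Proposition~\ref{prop:INEQ}(f), equivalently from $N(\ell)\to\infty$), so that $\mathbb{Z}\setminus\mathcal{S}_M$ has a well-defined maximum $f_{\mathcal{S}_M}\ge0$; one also uses that every negative integer automatically lies outside $\mathcal{S}_M$. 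Therefore $\gamma-\min\{\mathcal{M}_M\}=f_{\mathcal{S}_M}$, that is, $\min\{\mathcal{M}_M\}+f_{\mathcal{S}_M}=\gamma$. Finally, for an integral homology sphere $M$ one has $\mathfrak{o}=\alpha|e|=1$, so Corollary~\ref{cor:frob}(b) gives $f_{\mathcal{S}_M}=\alpha+\gamma$ and hence $\min\{\mathcal{M}_M\}=-\alpha$.

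I do not expect a real obstacle: every step is a short manipulation of the identity $N(\ell)+N(\gamma-\ell)=-2$ combined with the elementary set-theoretic dictionary between ``$\gamma-\ell\notin\mathcal{S}_M$'' and ``$\ell\in\gamma-\mathcal{S}_M$''. The only point that deserves a second glance is the Frobenius identity, where one must check that the supremum of the non-elements of $\mathcal{S}_M$ is finite and equals $f_{\mathcal{S}_M}$, and that it is non-negative because the negative integers are always non-elements; once Propositions~\ref{prop:SYM} and~\ref{prop:INEQ} and Corollary~\ref{cor:frob} are in hand, this is immediate.
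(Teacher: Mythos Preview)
Your argument is correct and is precisely the approach the paper intends: the corollary is stated immediately after Proposition~\ref{prop:SYM} with the line ``This combined with Corollary~\ref{cor:frob}{\it (b)} gives'', and no further proof is given, so the authors are relying on exactly the manipulations of $N(\ell)+N(\gamma-\ell)=-2$ that you spell out, with Corollary~\ref{cor:frob}{\it (b)} invoked only for the final integral homology sphere identity. Your write-up simply makes explicit what the paper leaves to the reader.
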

(Let us verify `elegantly',  using the first identity above,  that $\mathcal{M}$ is an
$\mathcal{S}$--module. Since $\mathcal{S}$ is a semigroup one has
$(\gamma-\ell\not\in \mathcal{S}, \ s\in\mathcal{S}) \Rightarrow \gamma-\ell-s\not\in \mathcal{S}$.
This transforms into $(\ell\in \mathcal{M}, \ s\in\mathcal{S}) \Rightarrow \ell+s\in \mathcal{M}$.)

\subsubsection{Connection with Gorenstein analytic type}\label{sss:Gortype}
 The identity (\ref{SHSsym}) is the topological `trace' of Gorenstein duality of algebraic geometry.
 Indeed, a weighted homogeneous singularity with numerically Gorenstein topological type
 is automatically Gorenstein. In this case one has
 $D^{(\ell)}+D^{(\gamma-\ell)}=K_{E_0}$ in ${\rm Pic}(E_0)$ (even if $g$ is not zero),
 which at ${\rm deg}$--level (with $g=0$) is (\ref{SHSsym}).
 The Gorenstein duality implies that $h^1(\mathcal{O}_{E_0}(D^{(\ell)}))=
 h^0(\mathcal{O}_{E_0}(D^{(\gamma-\ell)}))$. If $g=0$ then the very same formula follows
 already from (\ref{SHSsym}), since $\max\{0, -N(\ell)-1\}=\max\{0, N(\gamma-\ell)+1\}$.

This, for any numerically Gorenstein Seifert rationally homology sphere reads as
$$P_0^+(t)=\sum_{\ell=0}^\gamma \dim R_{X,\gamma-\ell}t^\ell=
\sum_{\ell=0}^\gamma \dim R_{X,\ell}t^{\gamma-\ell}.$$

\subsubsection{The connection between $\mathcal{S}_{an}$ and $\mathcal{S}_M$}\label{ss:swhII}
Above we defined two semigroups (monoids), $\mathcal{S}_{an}$ is a submonoid of
$L_{\geq 0}=\mathbb{N}^{|\mathcal{V}|}$,
sitting in the Lipman cone $\mathcal{S}_{top}$. The other is a numerical submonoid of $\mathbf {N}$.
The next statement basically follows from \cite{Pinkham}.
\begin{lemma}\label{lem:twoS}
The elements of $\mathcal{S}_M$ are the $E_0$--coefficients of the elements of
$\mathcal{S}_{an}$.
\end{lemma}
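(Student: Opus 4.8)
The plan is to connect the purely analytic description of $\mathcal{S}_{an}$ (principal cycles of functions $f\in\mathfrak m_{(X,o)}$) with Pinkham's grading of the local algebra $R_X=\oplus_\ell R_{X,\ell}=\oplus_\ell H^0(E_0,\mathcal{O}_{E_0}(D^{(\ell)}))$. Since $(X,o)$ is weighted homogeneous, every $f\in\mathfrak m_{(X,o)}$ decomposes into homogeneous pieces, so it suffices to treat homogeneous $f$; if $f$ is homogeneous of degree $\ell$ (i.e. $f\in R_{X,\ell}\setminus\{0\}$), I claim the $E_0$-coefficient of $(f)_\Gamma$ equals $\ell$, and conversely every $\ell\in\mathcal{S}_M$ is realized this way. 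Granting this claim, one direction of the Lemma is immediate: if $s=(f)_\Gamma\in\mathcal{S}_{an}$ with $f$ homogeneous of degree $\ell$, then the $E_0$-coefficient of $s$ is $\ell$, and $R_{X,\ell}\neq0$ forces $\ell\in\mathcal{S}_M$; a general $f$ is a sum of homogeneous pieces and its lowest-degree piece controls the divisorial order along $E_0$. For the other direction, given $\ell\in\mathcal{S}_M$ pick $0\neq f\in R_{X,\ell}$ and observe $(f)_\Gamma\in\mathcal{S}_{an}$ has $E_0$-coefficient $\ell$.

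First I would recall, following Pinkham \cite{Pinkham} (see also the setup in \ref{ss:WHS}), how a homogeneous element $f\in R_{X,\ell}$ determines its divisor on $\widetilde X$. The $\mathbb{C}^*$-action lifts to the minimal good resolution and acts on $E$ with $E_0$ as the quotient curve $\mathbb{P}^1$ and the points $P_i=E_0\cap E_{i1}$ as the special orbits. A homogeneous function of degree $\ell$ pulls back to a section of a line bundle whose restriction to $E_0$ is $\mathcal{O}_{E_0}(D^{(\ell)})$ with $D^{(\ell)}=\ell(-E_0|_{E_0})-\sum_i\lceil\ell\omega_i/\alpha_i\rceil P_i$; its vanishing order along $E_0$ is precisely the multiplicity with which $-E_0|_{E_0}$ appears, which is $\ell$. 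More conceptually: the divisor of a homogeneous degree-$\ell$ function, being $\mathbb{C}^*$-invariant, is a combination of the invariant divisors, namely $E$ (the exceptional set), the strict transforms of the special orbit-closures through the $P_i$, and $\ell$ times the "fiber class" $E_0^*$ in the appropriate normalization; reading off the $E_0$-coefficient and using $(E_0^*,E_0)=-1$ while $(E_v^*,E_0)=0$ for $v\neq0$ gives the coefficient $\ell$.

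The key step, and the main obstacle, is making precise the assignment $f\mapsto(f)_\Gamma$ on homogeneous elements and verifying the $E_0$-coefficient is exactly $\ell$ — i.e. that there is no extra contribution to $\mathrm{ord}_{E_0}(\pi\circ f)$ beyond the expected $\ell$, and that the resulting cycle is genuinely anti-nef (which we already know abstractly since $\mathcal{S}_{an}\subset\mathcal{S}_{top}$, cf. \ref{ss:princc}). This is essentially the content of Pinkham's description of the filtration of $R_X$ by vanishing order: the degree-$\ell$ part maps isomorphically onto $H^0(\mathcal{O}_{E_0}(D^{(\ell)}))$, and the $\ell$-grading coincides with the order of vanishing along the central curve. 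I would cite \cite{Pinkham} (and \cite{Nsplice, NOSZ}) for this identification rather than reprove it. Once this is in hand, combining with \eqref{eq:sgptop}, which says $\ell\in\mathcal{S}_M\iff R_{X,\ell}\neq0\iff N(\ell)=\deg D^{(\ell)}\geq0$, yields exactly the stated equality: the $E_0$-coefficients of $\mathcal{S}_{an}$ are $\{\ell:R_{X,\ell}\neq0\}=\mathcal{S}_M$.

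Finally I would record the mild bookkeeping needed for non-homogeneous $f$: write $f=\sum_{\ell\geq\ell_0}f_\ell$ with $f_{\ell_0}\neq0$; then along $E_0$ the order of $\pi\circ f$ equals $\mathrm{ord}_{E_0}(\pi\circ f_{\ell_0})=\ell_0$, so the $E_0$-coefficient of $(f)_\Gamma$ is $\ell_0\in\mathcal{S}_M$, and conversely any $\ell\in\mathcal{S}_M$ already arises from a homogeneous $f$. This closes both inclusions and proves the Lemma.
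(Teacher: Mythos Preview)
Your proposal is correct and follows essentially the same route as the paper. Both arguments reduce to the single input from \cite{Pinkham} that the weighted-homogeneous degree coincides with the $E_0$--divisorial valuation; the paper packages this as the identity $\mathcal{F}(\ell)=\bigoplus_{\ell'\geq\ell}R_{X,\ell'}$ for the divisorial filtration $\mathcal{F}(\ell)=\{f:(f)_\Gamma|_{E_0}\geq\ell\}$ and reads off $R_{X,\ell}=\mathcal{F}(\ell)/\mathcal{F}(\ell+1)$, whereas you unwind the same statement by decomposing into homogeneous pieces and tracking the lowest-degree term---the content is identical.
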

\begin{proof}
Associated with the irreducible component $E_{0}$ of the exceptional divisor we consider the divisorial filtration $\mathcal{F}:=\{\mathcal{F}(\ell)\}_{\ell\in\mathbb{Z}}$, where
$\mathcal{F}(\ell):=\{f\in \mathcal{O}_{(X,0)}| (f)_{\Gamma}|_{E_{0}}\geq \ell \}$
(and $(f)_{\Gamma}|_{E_{0}}:=((f)_{\Gamma},-E^*_{0})$ is the $E_{0}$-coefficient of the principal cycle $(f)_{\Gamma}$ associated with $f \in  \mathcal{O}_{(X,0)}$, cf.  \ref{ss:princc}).
On the other hand, by \cite{Pinkham}, $\mathcal{F}(\ell)=\oplus _{\ell'\geq \ell}
R_{X,\ell'}$. Hence $R_{X,\ell}=\mathcal{F}(\ell)/\mathcal{F}(\ell+1)$. Hence $\ell\in \mathcal{S}_M$
if and only if there exists $(f)_{\Gamma}\in\mathcal{S}_{an}$ with
$(f)_{\Gamma}|_{E_{0}}=\ell$.
\end{proof}
This fact shows that there is a richer monoid behind $\mathcal{S}_M$, namely $\mathcal{S}_{an}$.

\subsubsection{The combinatorial determination of $\mathcal{S}_{an}$.}\label{sss:Sanformula}
Before we state the next result we observe that both monoids $\mathcal{S}_{top}$ and
$\mathcal{S}_{an}$ are closed to taking the $`\min' $  in $L\subset \mathbb{Z}^{|\mathcal{V}|}$:
namely, if $l_1, l_2\in \mathcal{S}$ then $\min\{l_1,l_2\}\in\mathcal{S}$ too.

The next theorem was proved for
`splice-quotient singularities' \cite[Theorem 7.1.2]{Nsplice}. Since
 weighted homogeneous singularities are splice quotient, the theorem applies in our situation.
 Below we formulate it tailored already to the weighted homogeneous
 rational homology sphere case.

  Let $\mathcal{E}$ be the set of end--vertices. An {\it integral monomial cycle}
  has the form $\mathfrak{M}:=\sum_{e\in \mathcal{E}}n_eE^*_e\in L$, where each $n_e\in \mathbb{Z}_{\geq 0}$.
   (Note that the cycles $E^*_e$ usually are {\it rational} cycles, hence the condition that
   $\mathfrak{M}$ is integral imposes some `sub--lattice' condition on the coefficients $\{n_e\}_e$
   whenever $H$ is non--trivial.)
 \begin{thm}\label{th:San}\ \cite[Theorem 7.1.2]{Nsplice}
 Any element of $\mathcal{S}_{an}$ can be obtained as $\min_k\{\mathfrak{M}_k\}$ 
 for certain finitely many
 integral monomial cycles $\{\mathfrak{M}_k\}_k$.
 \end{thm}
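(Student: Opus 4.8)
The plan is to reduce the statement to a concrete property of the divisorial filtration $\mathcal{F}$ from Lemma~\ref{lem:twoS} together with the combinatorial structure of $\mathcal{S}_{an}$ inside the Lipman cone, and then to invoke the splice-quotient description of $\mathcal{S}_{an}$. First I would recall what $\mathcal{S}_{an}$ looks like for a weighted homogeneous (hence splice quotient) singularity: by Pinkham's description the graded pieces $R_{X,\ell}=H^0(E_0,\mathcal{O}_{E_0}(D^{(\ell)}))$ are spanned by products of sections cut out by the coordinate functions attached to the end-vertices (the ``node-determined'' or ``monomial'' functions of Neumann--Wahl), so every element $f$ of the maximal ideal whose principal cycle we want to realize can be written, up to lower-order terms not affecting $(f)_\Gamma$, as a $\mathbb{C}$-linear combination of monomials in these basic functions. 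The divisor of such a monomial $\prod_{e}z_e^{n_e}$ has associated cycle exactly $\mathfrak{M}=\sum_e n_e E^*_e$ once this is an integral cycle; this is the content of \cite[Theorem 7.1.2]{Nsplice} specialized to our setting, and I would simply quote it.

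The heart of the argument is then the passage from a single monomial to an arbitrary principal cycle via the $\min$ operation. The key step: if $f=\sum_\lambda c_\lambda \mathbf{z}^{n^{(\lambda)}}$ is a sum of finitely many monomials with generic coefficients $c_\lambda$, then $(f)_\Gamma=\min_\lambda\{\mathfrak{M}_\lambda\}$, where $\mathfrak{M}_\lambda=\sum_e n^{(\lambda)}_e E^*_e$ is the monomial cycle of the $\lambda$-th term. Indeed, for each $v\in\mathcal{V}$ the $E_v$-order of $f$ along $E_v$ is at least $\min_\lambda (\mathfrak{M}_\lambda)_v$, and — this is where genericity enters — for the coefficient $c_\lambda$ in general position there is no cancellation in the leading term along any $E_v$, so equality holds. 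Conversely, the set $\mathcal{S}_{an}$ is closed under $\min$ in $L$ (noted just before the theorem statement), and it contains all the integral monomial cycles $E^*_e$-combinations that are actually realized by functions; so every $\min_k\{\mathfrak{M}_k\}$ of such cycles lies in $\mathcal{S}_{an}$, giving the reverse inclusion. Combining the two directions yields that $\mathcal{S}_{an}$ is exactly the set of finite $\min$'s of integral monomial cycles.

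Concretely I would organize the proof as: (1) quote \cite[Theorem 7.1.2]{Nsplice}, noting that weighted homogeneous singularities are splice quotients, to get that $\mathcal{S}_{an}$ is generated — in the $\min$-closed sense — by the cycles realized by node-determined monomials; (2) observe that a node-determined monomial $\prod_e z_e^{n_e}$ has principal cycle $\sum_e n_e E^*_e$ whenever this is integral, using that $\operatorname{div}(z_e)=E^*_e$ on the resolution (the defining property of the end-vertex coordinate functions in Neumann--Wahl theory); (3) prove the $\min$-formula $(\sum_\lambda c_\lambda \mathbf{z}^{n^{(\lambda)}})_\Gamma=\min_\lambda\{\mathfrak{M}_\lambda\}$ for generic $c_\lambda$ by the no-cancellation argument vertex by vertex; (4) invoke closedness of $\mathcal{S}_{an}$ under $\min$ for the reverse inclusion.

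I expect the main obstacle to be step (3), the genericity/no-cancellation claim: one has to be careful that taking the leading coefficient along $E_v$ of a sum of monomials does not vanish accidentally, and in the presence of nontrivial $H=L'/L$ the relevant monomials are not the $E^*_e$ themselves but integral combinations, so one must check that the ``leading terms'' along each $E_v$ are genuinely linearly independent functions on $E_v$ (equivalently, that distinct monomial cycles restrict to distinct divisors on each relevant $E_v$, or that a Vandermonde-type nondegeneracy holds). The cleanest route is probably to argue that it suffices to treat each $f$ appearing as a defining function, reduce modulo higher filtration, and use that within a fixed graded piece $R_{X,\ell}$ Pinkham's basis consists of monomials with pairwise distinct monomial cycles, so a generic linear combination has principal cycle equal to their coordinatewise minimum — this is exactly the structure already implicit in \cite{Pinkham} and \cite{Nsplice}, so the argument is bookkeeping rather than a new idea, but it is the step that needs the most care to state correctly.
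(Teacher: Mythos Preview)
The paper does not prove this theorem at all: it is quoted verbatim from \cite[Theorem~7.1.2]{Nsplice} as a black box, with the only remark being that weighted homogeneous singularities are splice quotients so the cited result applies. There is therefore no ``paper's own proof'' to compare against; your step~(1) --- cite \cite{Nsplice} and note the splice-quotient property --- is already the entirety of what the paper does.

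Your steps (2)--(4) are an attempt to sketch the actual content of the cited result, but as written the argument is circular: in step~(1) you invoke \cite[Theorem~7.1.2]{Nsplice} to conclude that $\mathcal{S}_{an}$ is generated (under $\min$) by monomial cycles, which is exactly the statement of the theorem, and then steps (2)--(4) re-derive the same conclusion from scratch. If you genuinely want to outline an independent proof rather than quote the reference, you must drop step~(1) and rely only on Pinkham's description and the Neumann--Wahl monomial functions directly. In that case your no-cancellation step~(3) is indeed the crux, and the concern you raise there is real: one needs that for generic coefficients the divisorial valuation of a linear combination of monomials equals the minimum of the individual valuations at \emph{every} vertex simultaneously, which requires more than pairwise-distinct monomial cycles (distinct cycles can still have equal $E_v$-coefficient at some $v$, and then the leading terms along $E_v$ must be linearly independent as sections). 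This is handled carefully in \cite{Nsplice} but is not a triviality.
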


\section{Strongly flat semigroups and  Seifert homology spheres}\label{s:Ssf}

\subsection{} In this section let $M=\Sigma(\alpha_1,\dots,\alpha_d)$ be a Seifert
integral homology sphere. Thus, $\alpha_1,\ldots,\alpha_d \geq 2$ are pairwise relatively prime integers and $b_0$, $(\omega_1,\ldots,\omega_d)$ are uniquely determined by the Diophantine equation $\alpha(b_0-\sum_{i=1}^d\omega_i/\alpha_i)=1$.

First note that if we set  $a_i:=\alpha/\alpha_i$
then the greatest common divisor of $a_1, \ldots, a_{i-1}, a_{i+1},\ldots, a_d$
 is $\alpha_i$, hence the system $\{a_i\}_{i=1}^d$ generates a strongly flat semigroup
 $G(a_1, \ldots, a_d)$. Moreover, using the above Diophantine equation one shows that
 $N(a_i)=0$. Hence $G(a_1, \ldots, a_d)\subset \mathcal{S}_M$. This fact was already noticed by
  Can and Karakurt in \cite[Theorem 4.1]{CK} too.

Let us provide the analytic interpretation/proof  of this statement.

A weighted homogeneous  analytic realization is given by a Brieskorn
 isolated complete intersection. This consists of $d-2$ equations in $(\mathbb{C}^d,0)$ of type
 $\sum _{i=1}^d a_{ki} z_i^{\alpha_i}=0$ $(k=1,\ldots,d-2)$, where the matrix
 $\{a_{ki}\}_{i,k}$ has full rank.

 The strict transforms of the coordinate functions $z_i$ are supported on the end--exceptional curves,
 their divisors $(z_i)_\Gamma$ are exactly the cycles $E^*_i$ associated with the end--vertices.
 In particular, these cycles belong to $\mathcal{S}_{an}$. 
($N(a_i)=0$ shows that $\dim (R_{X,a_i})=1$, that is, each $R_{X,a_i}$ is a 1--dimensional vector space
generated by the coordinate function $z_i$.)
 The $E_0$--coefficient of $E_i^*$ is $a_i$ (cf. \cite{NOSZ}). Hence the  inclusion
 $G(a_1, \ldots, a_d)\subset \mathcal{S}_M$
 follows  from   Lemma \ref{lem:twoS} as well.

  Can and Karakurt in \cite[Theorem 4.1]{CK} observed also
  that the two numerical semigroups on the interval $[0,\gamma]$ agree.
Thank to Theorem \ref{th:San}   we show that in fact they agree everywhere.

\begin{thm}\label{thm:Sgen}
$\mathcal{S}_M=G(a_1, \ldots, a_d)$,
that is, the strongly flat semigroups are exactly the numerical semigroups associated with
Seifert integral homology spheres $\Sigma(\alpha_1,\dots,\alpha_d)$.
\end{thm}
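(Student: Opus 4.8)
The plan is to prove the reverse inclusion $\mathcal{S}_M \subseteq G(a_1,\dots,a_d)$, since the inclusion $G(a_1,\dots,a_d)\subseteq \mathcal{S}_M$ has already been established analytically and topologically above (via $N(a_i)=0$ and Lemma \ref{lem:twoS}). The key tool is Theorem \ref{th:San}: every element of $\mathcal{S}_{an}$ is of the form $\min_k\{\mathfrak{M}_k\}$ for finitely many integral monomial cycles $\mathfrak{M}_k = \sum_{e\in\mathcal{E}} n_{k,e} E^*_e$. Combined with Lemma \ref{lem:twoS}, an element $\ell\in\mathcal{S}_M$ is the $E_0$--coefficient of some such $\min_k\{\mathfrak{M}_k\}\in\mathcal{S}_{an}$.

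First I would record that for a Seifert \emph{integral} homology sphere, $H$ is trivial, so the `sub--lattice condition' on the coefficients $\{n_e\}$ disappears: every cycle $\sum_e n_e E^*_e$ with $n_e\in\mathbb{Z}_{\geq 0}$ is automatically integral (here one uses $\mathfrak{h}=|H|=1$, i.e.\ $\alpha|e|=1$). Next, I would compute the $E_0$--coefficient of an integral monomial cycle: since the $E_0$--coefficient of $E^*_i$ (for an end--vertex $i$) equals $a_i=\alpha/\alpha_i$, the $E_0$--coefficient of $\mathfrak{M}_k$ is $\sum_{e} n_{k,e} a_e$, which lies in $G(a_1,\dots,a_d)$ by definition. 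The remaining point is that taking $\min_k$ of these cycles can only decrease the $E_0$--coefficient among the values $\{\sum_e n_{k,e} a_e\}_k$, but the $E_0$--coefficient of $\min_k\{\mathfrak{M}_k\}$ is the minimum of the $E_0$--coefficients of the individual $\mathfrak{M}_k$ (the $\min$ on cycles is taken coefficient--wise in $L$), hence it equals $\min_k \sum_e n_{k,e} a_e$, which is again one of the values $\sum_e n_{k,e} a_e\in G(a_1,\dots,a_d)$.

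Therefore $\ell = \min_k\{(\mathfrak{M}_k, -E^*_0)\} = (\mathfrak{M}_{k_0}, -E^*_0) = \sum_{e} n_{k_0,e} a_e$ for some index $k_0$, so $\ell\in G(a_1,\dots,a_d)$. This gives $\mathcal{S}_M\subseteq G(a_1,\dots,a_d)$, and combined with the established inclusion the equality follows; the final sentence of the theorem is then just a restatement, since $G(a_1,\dots,a_d)$ ranges exactly over the strongly flat semigroups as the $\alpha_i$ range over pairwise coprime integers $\geq 2$ (with $a_i=\alpha/\alpha_i$).

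The step I expect to be the main (though mild) obstacle is the clean bookkeeping around the $\min$ operation: one must be careful that the $E_0$--coefficient of a coefficient--wise minimum of cycles really is the minimum of the $E_0$--coefficients — this is immediate from the definition of $\min\{l'_1,l'_2\}:=\sum_v\min\{l'_{1v},l'_{2v}\}E_v$ in \ref{ss:nsstop}, applied to the $v=v_0$ slot — and that the full--rank/monomial structure of Theorem \ref{th:San} is not needed beyond the shape $\sum_e n_e E^*_e$ with nonnegative integer coefficients. A secondary point worth stating explicitly is why the monomial cycles suffice and no `correction terms' appear: this is exactly the content of Theorem \ref{th:San} for splice quotients, which applies since weighted homogeneous singularities are splice quotient.
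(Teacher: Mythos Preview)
Your proposal is correct and follows essentially the same approach as the paper's proof: both invoke Lemma \ref{lem:twoS} and Theorem \ref{th:San}, observe that $H=0$ makes every monomial cycle automatically integral, and note that the $E_0$--coefficient of $\min_k\{\mathfrak{M}_k\}$ coincides with the $E_0$--coefficient of one of the $\mathfrak{M}_k$, hence lies in $G(a_1,\dots,a_d)$. Your write-up is more explicit about the coefficient-wise nature of $\min$, but the logical content is identical.
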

\begin{proof}
We will apply Lemma \ref{lem:twoS} and Theorem \ref{th:San}. 
By Theorem \ref{th:San}
the cycles $E^*_i$ generate $\mathcal{S}_{an}$ by the operations $+$ and $\min$.
Note that the $E_0$--coefficient of
$\min_k\{\mathfrak{M}_k\}$ is the $E_0$ coefficient of one of the integral monomial cycles 
$\mathfrak{M}_k$. On the other hand, since $H=L'/L=0$, all the  dual cycles $E^*_i$ are automatically integral, hence $\mathcal{S}_M$ is generated by the $E_0$--coefficients of the end dual cycles $E^*_i$, that is, by the $a_i$'s.
\end{proof}

In this case ${\rm lcm}(a_1,\ldots, a_d)=\alpha$, hence the right hand side of (\ref{fbound})
(via (\ref{eq:sei2}) is $\gamma+\alpha$. Hence the two statements 
$f_{\mathcal{S}_M}=\gamma+\alpha$
and $ f_{G(a_1,\ldots,a_n)}=\gamma+\alpha$, 
from  Raczunas and Chrz\c astowski-Wachtel \cite{RChW} (cf. \ref{ss:DFP}) and from Corollary
\ref{cor:frob}{\it (b)} respectively,  identify.

\subsubsection{The symmetry of $\mathcal{S}_M$}
It is really amazing how naturally appears another symmetry
in this integral homology sphere  situation,
at this time of $\mathcal{S}_M$. 
First notice that 
 \begin{equation}\label{eq:MIHS}
 f_{\mathcal{M}_M}=\gamma \ \  \mbox{and} \ \  \min\{\mathcal{M}_M\}=-\alpha.\end{equation}
If  $\Gamma\not=-E_8$, then it follows  from Corollaries \ref{cor:frob}{\it (a)} and
\ref{cor:SMsym}. In the rational case (that is, for $-E_8$) $f_{\mathcal{M}}=\gamma=-1$, and 
$f_{\mathcal{S}_M}=\gamma+\alpha$ is still valid (use eg. (\ref{fbound})), and $\min\{\mathcal{M}_M\}=-\alpha$ too (use Proposition \ref{prop:SYM}).

However, one can do much more. Since $N(\alpha)=\mathfrak{o}=1$, by Proposition \ref{prop:INEQ}{\it (e)}
$N(\alpha+\gamma-\ell)=N(\gamma-\ell)+1$, hence (\ref{SHSsym}) reads as
\begin{equation}\label{SHSsym2}
N(\ell)+N(\alpha + \gamma-\ell)=-1 \ \ \mbox{for any} \ \ell\in\mathbb{Z}.
\end{equation}
this implies (compare with  (\ref{eq:SymSM}))
\begin{equation}\label{eq:SymSS}
\ell\in \mathcal{S}_M \ \Leftrightarrow \ \alpha+ \gamma-\ell\not \in \mathcal{S}_M.
\end{equation}
In  particular, $\mathcal{S}_M$ is symmetric. Moreover,
one verifies (either by a combination of
(\ref{eq:SymSM}) and (\ref{eq:SymSS}), or by
$(N(\ell)\geq  0)\Leftrightarrow (N(-\alpha+\ell)+1\geq 0)$ via Proposition  \ref{prop:INEQ}{\it (e)})
that
\begin{equation}\label{eq:MGor}
\mbox{$\mathcal{M}_M$ is generated as $\mathcal{S}_M$--module by \underline{one}
 element, namely by $-\alpha=\min\{\mathcal{M}_M\}$.}
\end{equation} In other words, $\mathcal{M}_M=-\alpha+\mathcal{S}_M$.

\subsubsection{} Lemma \ref{lem:twoS} and Theorem \ref{thm:Sgen}
have the following important messages:

(a) The combinatorial approach of the   theory of strongly flat semigroups is replaced by
topological and algebro-geometrical tools and methods. This connection also gives the
possibility of wider applicability of the classical combinatorial theory in topology and singularity theory (and viceversa). It would be interesting to find a similar topological
realizability for any $G(a_1,\ldots, a_d)$.

(b) As the strongly flat $G(a_1,\ldots, a_d)$ is the one--coordinate projection of an affine monoid $\mathcal{S}_{an}$, one should try to find similar embedding of arbitrary numerical
semigroups  into some canonical affine monoids,
which hopefully carry more geometry (and definitely their study is more challenging).

 (c) As  $G(a_1,\ldots, a_d)$ is the $E_0$--projection of $\mathcal{S}_{an}$, one can ask,
 what about the other projection. Are they `interesting' numerical semigroups?

 Eg., let us determine the numerical semigroup $\mathcal{S}_{end}$
 obtained from $\mathcal{S}_{an}$ associated with $\Sigma(\alpha_1,\ldots, \alpha_d)$ projected on the
 end--vertex associated with index $d$.

 Again, $\mathcal{S}_{an}$ is generated by the duals of the end--vertices $\{E^*_i\}_{i=1}^d$.
 Their $E_d$--coefficients were already listed in front of (\ref{eq:ZKend}).
 Namely, the $E_d$ coefficient of $E^*_i$ $(1\leq i<d)$ is $1/(|e|\alpha_i\alpha_d)=\alpha /
( \alpha_i\alpha_d)$. Hence the first $(d-1)$ generators generate the  usual
$G$--type monoid associated with $\alpha_1, \ldots, \alpha_{d-1}$. On the other hand, the
$E_d$--coefficient of $E_d^*$ is $1/(|e|\alpha_d^2)+\omega_d'/\alpha_d$, which after a computation
(and using the Diophantine equation) is $\lceil \alpha_1\cdots \alpha_{d-1}/\alpha_d\rceil$.
Hence, $\mathcal{S}_{end}$ is generated by
$$\alpha_2\cdots\alpha_{d-1}, \ \alpha_1\alpha_3\cdots\alpha_{d-1},\cdots, \
\alpha_1\cdots \alpha_{d-2},\
\lceil \alpha_1\cdots \alpha_{d-1}/\alpha_d\rceil,
$$
where all $\alpha_i$ are pairwise relative prime.

Eg., if $\alpha_d$ is very large compared with the others then the last generator is 1,
hence merely this generator generates $\mathcal{S}_{end}$, which is $\mathbb{N}$.
There are choices when the last generator is already in the monoid generated by the others, hence, as generator can be deleted. And there are also examples when this last generator is the
conductor of the  monoid generated by  the first $d-1$ generators (take eg. $\Sigma(2,3,7)$).
So, in general, the computation   of the minimal set of generators, and also of the
Frobenius number,  can be a challenge.

(d)
The aim of the forthcoming sections is to generalize the above results to the case of Seifert rational homology spheres:  we will determine
the Frobenius numbers of  $\mathcal{S}_M$ and of the
$\mathcal{S}_M$--module $\mathcal{M}_M$.
In fact, the module will play a crucial auxiliary role: first we compute its Frobenius number, and then
we reduce the computation of $f_{\mathcal{S}}$ to $f_{\mathcal{M}}$ associated with another
aide graph.

However, it will turn out that the
 tools are far to be merely arithmetical, they rely on the mathematical machinery
  of `generalized Laufer computation sequences', which originally was used in the computation
  of analytic invariants of surface singularities, and later in the computation of the
  lattice cohomology   of their links. In order to make the presentation more complete,
  in the following we review some motivations, definitions and facts  from \cite{NOSZ,LNred}
  about the lattice cohomology
   and then we present the  special computation sequences needed in the proof.

\section{Topology and arithmetics of the quasi-linear function}\label{s:topology}

\subsection{$\{N(\ell)\}_{\ell}$ and Heegaard--Floer homology}\label{ss:MotHF}
As a preamble, let us try first to present only in a few words how  the sequence
 {$\{N(\ell)\}_{\ell}$ from (\ref{defN}) appears in modern low--dimensional topology.

Heegaard--Floer homology by Ozsv\'ath and Szab\'o \cite{OSz1} is one of the most important and highlighted invariants of $3$-manifolds. In the case of negative definite Seifert 3--manifolds the results of \cite{OSz2} and \cite{NOSZ} have shown that the calculation of Heegaard--Floer homology is purely combinatorial. In fact, it is isomorphic with the lattice cohomology introduced by
the second author  \cite{Nlat}
(see also \cite{Ngr}),  which provides a combinatorial recipe for its computation.
(In fact, everything generalizes to the case of `almost rational' graphs, see \cite{NOSZ,Ngr}.)

To start with,  we define  the discrete function $\tau:\mathbb{N}\rightarrow \mathbb{Z}$
 by the recurrence
\begin{equation}\label{eq:tau}\tau(\ell+1)=\tau(\ell)+1+N(\ell)\end{equation}
 with initial condition $\tau(0)=0$. The statement is  that
 this function (in fact,
  already its `local minimum and maximum points and values')
   determines the Heegaard--Floer homology of $M$.

It is a mystery that this topologically motivated `difference $\tau$--function'
is exactly the quasi-linear function $\ell\mapsto 1+N(\ell)$ imposed by the (analytic)
Poincar\'e series of the local ring, determined by Pinkham's theory.
For more, see sections \ref{ss:lattice}--\ref{ss:concat}, especially Remark \ref{rem:xt}.

This theoretical result initiated several work regarding the concrete properties of the function $N(\ell)$, see e.g. the work of Can and Karakurt in \cite{CK}
in the integral homology sphere case regarding the local extrema of $\tau$.
Their results have been extended in \cite{KL} and applied to the `botany problem' for Seifert homology spheres. In this work further deep structural connections with Heegaard--Floer theory are
established.

\subsection{Lattice cohomology and generalized Laufer computation sequences}\label{ss:lattice} \
The statements of this subsection will not be  used later, they have purely informative and
motivational role.

\subsubsection{Definition and reduction}\label{ss:defLH}
To define the lattice cohomology one first  needs  a lattice $\mathbb{Z}^n$
 with fixed base elements $\{E_i\}_{i=1}^n$,
which provide a cubical decomposition of $\mathbb{R}^n=\mathbb{Z}^n\otimes\mathbb{R}$:
the $0$-cubes are the lattice points $l\in \mathbb{Z}^n$, a $q$-cube will be denoted by $(l,I)$ for
some $I\subset\{1,\dots,n\}$ with $|I|=q$ and it is determined by its vertices $l+\sum_{j\in J}E_j$
for any $J\subset I$. One also considers a weight function
$w:\mathbb{Z}^n\rightarrow\mathbb{Z}$
bounded bellow, and for each cube $(l,I)$ one defines $w(l,I):=\max\{w(v)\ : \ v \ \mbox{vertex of } (l,I)\}$. Then for each integer $N\geq \min w$
one defines the simplicial complex $S_N$ as the union of all cubes with $w(l,I)\leq N$.
The lattice cohomology associated with the pair $(\mathbb{Z}^n,w)$
  is the $\mathbb{Z}[U]$-modules $\{\mathbb{H}^q(\mathbb{Z}^n,w)\}_{q\geq 0}$ defined by $\mathbb{H}^q(\mathbb{Z}^n,w):=\bigoplus_{N\geq\min w}H^q(S_N,\mathbb{Z})$.
 The $U$-action consists of the restriction maps induced by the inclusions $S_N\hookrightarrow S_{N+1}$.

In some cases one takes only part of the lattice, say $(\mathbb{Z}_{\geq 0} )^n $, and only those
cubes which are supported by this part, a weight function
 $w:(\mathbb{Z}_{\geq 0})^n\rightarrow\mathbb{Z}$, and one defines in the same way
 $\mathbb{H}^q((\mathbb{Z}_{\geq 0})^n,w)$.

In the case of normal surface singularities, we fix a good resolution $\Gamma$ and construct the lattice cohomology using the lattice $L\cong \mathbb{Z}^{|\mathcal{V}|}$ and the weight function $\chi_{k_r}$ associated with any characteristic orbit $[k]$,
usually denoted by $\{\mathbb{H}^*(L,[k])\}_{[k]}$.
In fact, it it independent of the choice of the resolution, it depends only on the link
and the $spin^c$--structure  codified by $[k]$.
 It serves as a cohomological categorification for the Seiberg--Witten invariant, see eg. \cite{NOSZ,NJEMS}.

 The Reduction theorem of \cite{LNred} allows to reduce the rank of the lattice
and evidences the complexity of the cohomology. In fact, in the case of  Seifert rational homology
spheres (and in general, in the case of plumbed manifolds associated with `almost rational'
 graphs) the higher cohomology $\mathbb{H}^{\geq 1}(L,[k])=0$, while $\mathbb{H}^{0}(L,[k])$
is reduced to the rank--one case
$\mathbb{H}^0(\mathbb{Z}_{\geq 0},w_{[k]})$, where $\ell\mapsto w_{[k]}(\ell)$ ($\ell\geq 0$)
is determined via  some special universal cycles, denoted by  $\{x_{[k]}(\ell)\}_{\ell}$  in
\cite{LNred}, \cite{NOSZ}, and via  $w_{[k]}(\ell)=\chi_{k_r}(x_{[k]}(\ell))$.
If $[k]=[K]$, then $w_{[k]}(\ell)=\tau(\ell)$ from (\ref{eq:tau}), see Remark \ref{rem:xt}.

 We will not provide here any cohomology computation,
 however by the above  comment we  wished to inform the reader about the
 origins of the technical tools from below, and why were they  developed. The interested reader might
 consult the articles mentioned above and the references therein.

\subsubsection{Generalized Laufer computation sequences}\label{ss:compseq} \

One of
the main technical tools of the computation of the lattice cohomology is the `computation sequence'.
A computation sequence has the form $\{z_i\}_{i=0}^k$, $z_i\in L'$, such that for any
$i$ one determines by some algorithmic rule a vertex $v(i)\in\mathcal{V}$ using which one sets
$z_{i+1}:=z_i+E_{v(i)}$. It is the discrete latticial analogue of a continuous path.
It connects $z_0$ to $z_k$. The cycle
$z_k$ usually has some universal property targeted by the algorithm.

As we already mentioned, we have to compute the simplicial cohomology of the simplicial complexes
$S_N$ (a union of latticial cubes). Note also that on $S_N$ we have the restriction of the weight
function $\chi_{k_r}$. Recall that in Morse theory, in the presence of a height function $f$,
we wish to contract our space along the flow determined by $f$ (by decreasing $f$),
or to find extremal points as universal points at the `end' of flow--lines. In our case we
proceed similarly, the computation sequences are the `flow--pathes' determined by an initial
lattice point and the weight (hight) function $\chi_{k_r}$. In this way we can contract
$S_N$, or we find  some `extremal' lattice points with universal properties.

The first computation sequence in $L$ was introduced by Laufer by a special
optimization algorithm (flowing in the direction of the Lipman cone $\mathcal{S}_{top}$),
it is $\chi$--decreasing,  starts with $E$ and ends at $Z_{min}$,
the Artin's cycle, the minimal non--zero element of the Lipman cone $\mathcal{S}_{top}$ \cite{Laufer72}.
In this way Laufer
succeeded to define a rationality criterion (basically equivalent with the contractibility
of all spaces $S_N$, see also \cite{NOSZ,Ngr}).
The tool was successfully used later in the theory of elliptic singularities as well
\cite{Laufer77,weakly,OkumaEll}. For application in lattice cohomology see \cite{NOSZ,Ngr,LNred}.

\subsubsection{}\label{sss:shift}
 Note that the restriction of $\chi_{k_r}$ on $L$ (where $k_r=K+2s_h$)
can be rewritten as the restriction of $\chi$ on the shifted lattice $s_h+L=r_h+L$.
Indeed, for any $l\in L$ one has $\chi_{k_r}(l)=\chi(l+s_h)-\chi(s_h)$. Below in our computations we will use this second approach. In fact, we will not need all the possible classes, but only those
associated with $h=0$ and $h=[Z_k]$ (if $\Gamma$ is numerically Gorenstein, then even these two classes
agree). For them the associated package of invariants will be compared by duality.

\subsection{Universal cycles and their  concatenated sequences}\label{ss:concat}
The next lemma   generalizes similar statements form
 \cite[\S 7]{NOSZ}, \cite[Prop. 4.3.3]{Ngr} and \cite[\S 3]{LNred}, see also \cite{Laufer72}.

\begin{lemma}\label{lem:cs} \
Set $\mathcal{V}^*:=\mathcal{V}\setminus\{v_0\}$, and take $r_h$ associated with $h\in H$ as above.
\begin{enumerate}
 \item[(1)] For any $\ell\in \mathbb{Z}_{\geq 0}$ there exists a unique minimal (rational) cycle
  $x^{\ell}$ with respect to the following properties:
 \begin{itemize}
  \item[(a)] $m_0(l')=m_0(r_{h})+\ell$ and
  \item[(b)] $l'\in (r_{h}+L)$ and $(l',E_v)\leq 0$ for any $v\in \mathcal{V}^*$.
 \end{itemize}
\item[(2)]  One has  $x^0\geq r_h$ and $x^{\ell+1}\geq x^{\ell}+E_0$ for any $\ell\geq 0$.
 \item[(3)] For any $l'\in r_{h}+L$ satisfying (a) we have $\chi(l')\geq \chi(x^\ell)$.
 \item[(4)] There is a `generalized Laufer computation sequence' $\{z_i\}_{i\geq 0}$ connecting $x^\ell$ and $x^{\ell+1}$ as follows: set $z_0:=x^\ell$, $z_1:=x^\ell+E_0$ and assume that $z_i$ ($i\geq 1$) is already constructed. If $z_i$ does not satisfy (b), then we have $(z_i,E_{v(i)})>0$ for some $v(i)\in\mathcal{V}^*$ and set $z_{i+1}=z_i+E_{v(i)}$. Otherwise, we necessarily have $z_i=x^{\ell+1}$.

     Similarly, there is a `generalized Laufer computation sequence' $\{z_i\}_{i\geq 1}$
     connecting $r_h$ and $x^{0}$ as follows: set  $z_1:=r_h$, and then repeat the definition
     from the previous case.
\item[(5)] $\chi(x^0)=\chi(r_h)$ and $\chi(x^{\ell+1})-\chi(x^\ell)=1-(x^\ell,E_0)$ for $\ell\geq 0$.
\end{enumerate}
\end{lemma}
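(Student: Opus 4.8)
The plan is to construct $x^\ell$ explicitly as a minimal cycle and then verify the five properties in the order stated, since (1) underpins everything and (2)--(5) are consequences of the construction and of the $\chi$-monotonicity built into the computation sequence. First I would establish existence and uniqueness in (1): the set of $l'\in r_h+L$ satisfying $m_0(l')=m_0(r_h)+\ell$ and $(l',E_v)\le 0$ for all $v\in\mathcal V^*$ is closed under taking coordinatewise $\min$ (the intersection form on the legs is negative definite, so the anti-nef condition on $\mathcal V^*$ is preserved by $\min$, while the fixed $E_0$-coordinate is also preserved). Boundedness from below comes from negative definiteness of the full form restricted to the legs; hence a unique minimal element $x^\ell$ exists. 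This is essentially the shifted-lattice analogue of the standard construction of $s_h$ (Lemma~\ref{lem:cs2}), carried out leg by leg.

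Next I would prove (4), the computation-sequence description, because (2), (3), and (5) all follow from it. Starting from $z_0=x^\ell$ and $z_1=x^\ell+E_0$, at each stage where $(z_i,E_{v(i)})>0$ for some $v(i)\in\mathcal V^*$ one sets $z_{i+1}=z_i+E_{v(i)}$; this is a Laufer-type optimization step. The key point is that such a step does not increase $\chi$ restricted appropriately, and more importantly that every $z_i$ stays $\le$ the true $x^{\ell+1}$: indeed if $z_i\le x^{\ell+1}$ and $(z_i,E_{v(i)})>0$ while $(x^{\ell+1},E_{v(i)})\le 0$, then negative definiteness forces $z_i+E_{v(i)}\le x^{\ell+1}$ (the $E_{v(i)}$-coordinate of $z_i$ is strictly less than that of $x^{\ell+1}$). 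Since the sequence is increasing, bounded above by $x^{\ell+1}$, and $E_0$-coordinate is fixed at $m_0(r_h)+\ell+1$ after the first step, it terminates at a cycle satisfying (b) with the right $E_0$-coordinate, which by minimality must be $x^{\ell+1}$. The same argument with $z_1:=r_h$ yields the sequence connecting $r_h$ to $x^0$, giving $x^0\ge r_h$; and $x^{\ell+1}\ge x^\ell+E_0$ is immediate from $z_1=x^\ell+E_0$ being the start of a nondecreasing sequence ending at $x^{\ell+1}$. This is (2).

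For (3) I would argue that any $l'\in r_h+L$ with $m_0(l')=m_0(r_h)+\ell$ can be connected to $x^\ell$ by a $\chi$-nonincreasing computation sequence: whenever $(l',E_v)>0$ for $v\in\mathcal V^*$, replacing $l'$ by $l'+E_v$ strictly decreases $\chi$ (standard identity $\chi(l'+E_v)-\chi(l')=-(l',E_v)+\chi(E_v)$ with $\chi(E_v)\le 0$ since $E_v$ is rational, in fact $=1-b_v/2\le 0$; one must also check the $E_0$-coordinate is untouched, which it is). Running this to termination lands on the minimal anti-nef-on-$\mathcal V^*$ cycle with the prescribed $E_0$-coordinate, namely $x^\ell$, while $\chi$ only went down. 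Finally (5): $\chi(x^0)=\chi(r_h)$ follows because the $r_h\to x^0$ sequence of step (4) consists of moves $l'\mapsto l'+E_v$ with $(l',E_v)=0$ at the moment of the move (otherwise minimality of $x^0$ or the termination rule is violated)—more carefully, one shows each such step has $\chi$-increment $-(z_i,E_{v(i)})+\chi(E_{v(i)})$ and sums to zero using that $r_h$ lies in the semi-open cube; and $\chi(x^{\ell+1})-\chi(x^\ell)=1-(x^\ell,E_0)$ follows by telescoping the increments along the $x^\ell\to x^{\ell+1}$ sequence: the first move contributes $-(x^\ell,E_0)+\chi(E_0)=-(x^\ell,E_0)+1-(b_0)/2$... here one must track the genus-zero normalization carefully, and each subsequent move on $\mathcal V^*$ contributes zero by the same cancellation as in the $r_h\to x^0$ case.

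The main obstacle I expect is the bookkeeping in (5): proving that every intermediate step on $\mathcal V^*$ contributes exactly zero to the $\chi$-increment. This requires knowing that at the moment vertex $v(i)$ is chosen one has $(z_i,E_{v(i)})=1$ (not merely $>0$), which in turn uses that $z_i$ arose from a cycle satisfying $(z_{i-1},E_v)\le 1$ for the relevant $v$—a propagation-of-smallness invariant along the leg that must be set up inductively from the structure of continued-fraction chains, exactly as in Laufer's original argument and its refinements in \cite{NOSZ,Ngr,LNred}. Once that invariant is in place, the identity $\chi(x^{\ell+1})-\chi(x^\ell)=1-(x^\ell,E_0)$ drops out, and everything reduces to the $E_0$-coordinate alone, which is the whole point of the reduction to rank one.
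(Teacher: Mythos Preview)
Your overall architecture matches the paper's, and parts (1), (2), (4) are essentially the paper's arguments (though the paper proves (2) directly rather than via (4), which is fine either way). There is, however, a genuine gap in your proof of (3), and a related arithmetic slip that propagates into (5).

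\textbf{The gap in (3).} You claim that starting from an \emph{arbitrary} $l'\in r_h+L$ with $m_0(l')=m_0(r_h)+\ell$ and running the Laufer sequence on $\mathcal V^*$ terminates at $x^\ell$. This is false: the sequence only \emph{adds} base vectors, so every coordinate is nondecreasing. If some $E_v$-coordinate of $l'$ already exceeds that of $x^\ell$, the sequence can never reach $x^\ell$; it will terminate at some strictly larger cycle satisfying (b), and you have no way to compare its $\chi$-value to $\chi(x^\ell)$. The paper handles this correctly in two steps: first it treats the case $l'\le x^\ell$ (where your sequence argument does work, since one shows inductively $z_i\le x^\ell$ throughout, and then minimality forces the terminal cycle to equal $x^\ell$); then for general $l'$ it writes $l'=x^\ell-l_1+l_2$ with $l_1,l_2\in L_{\ge0}$ supported on $\mathcal V^*$ with disjoint supports, and uses the identity
\[
\chi(l')=\chi(x^\ell-l_1)+\chi(l_2)+(l_1,l_2)-(x^\ell,l_2).
\]
Here $(l_1,l_2)\ge 0$ (disjoint supports), $-(x^\ell,l_2)\ge 0$ (by (b)), and crucially $\chi(l_2)\ge 0$ because $l_2$ is supported on the legs, which are \emph{rational} subgraphs (Artin's criterion). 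This last inequality is the missing ingredient in your approach.

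\textbf{The arithmetic slip.} You write $\chi(E_v)=1-b_v/2\le 0$. In fact $\chi(E_v)=-(K+E_v,E_v)/2=1$ for every rational $E_v$, by adjunction. The correct increment is $\chi(z_{i+1})-\chi(z_i)=1-(z_i,E_{v(i)})\le 0$, so the sequence is $\chi$-nonincreasing (not strictly decreasing). This does not damage (3) beyond the gap already noted, but it does affect your sketch of (5): the statement ``moves with $(l',E_v)=0$'' is impossible (the rule requires $(z_i,E_{v(i)})>0$). What one actually needs is $(z_i,E_{v(i)})=1$ at every step $i\ge 1$, which the paper obtains from Laufer's rationality criterion applied to the legs---morally the same as your ``propagation-of-smallness'' invariant, but phrased more cleanly.
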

\begin{proof}
{\it (1)} The proof is similar to one given by Laufer targeting $Z_{min}$ in \cite{Laufer72}.
The negative definiteness of the intersection form guarantees
 the existence of a cycle with properties (a) and (b).  Then, it is enough to prove that
 if $x_1,x_2$ satisfy (a) and (b) then $x:=\min\{x_1,x_2\}$ also satisfies (a) and (b). Indeed, for any $v\in \mathcal{V}^*$ there is at least one index $i\in\{1,2\}$ such that $E_v\notin |x_i-x|$. Hence, $(x,E_v)=(x_i,E_v)+(x-x_i,E_v)\leq 0$.\\
{\it (2)}
Assume that
we can write $x^\ell=x^\ell_1-x^\ell_2$ such that $x^\ell_i\geq 0$ for $i\in \{1,2\}$ and $|x^\ell_1|\cap|x^\ell_2|=\emptyset$. Then for any $E_v \in |x^\ell_2|$ we have $(-x^\ell_2,E_v)\leq (x^\ell,E_v)\leq 0$. In particular, $(x^\ell_2)^2\geq 0$ which implies $x^\ell_2=0$ by the negative definiteness of the intersection form. Since $x^\ell\in r_h+L$ we get $x^\ell\geq r_h$.

Finally observe that  $x^{\ell+1}-E_0$ satisfy (a)--(b) for $\ell$, hence by the minimality
of $x^\ell$ the last inequality follows as well.
\\
{\it (3)} Assume first that $l'\leq x^\ell$. Then there is a generalized Laufer computation sequence $\{z_i\}_{i=0}^m$ connecting $l'$ with $x^\ell$ as follows: set $z_0:=l'$ and assume that $z_i$ is already constructed. If for some $v(i)\in \mathcal{V}^*$ one has $(z_i,E_{v(i)})>0$ continue with $z_{i+1}=z_i+E_{v(i)}$.  We claim that along these steps still $z_i\leq x^\ell$. We verify this by
induction, we assume that $z_i\leq x^\ell$ and we prove that $z_{i+1}\leq x^\ell$.
For this observe that $m_{v(i)}(z_i)=m_{v(i)}(x^\ell)$ cannot happen. Indeed, in such case,
$(x^\ell, E_{v(i)})=(x^\ell-z_i, E_{v(i)})+(z_i, E_{v(i)})>0$, a contradiction. Hence
necessarily $m_{v(i)}(z_i)<m_{v(i)}(x^\ell)$, ie. $z_{i+1}\leq x^\ell$ too.
In particular, the constructed sequence $\{z_i\}_i$ must stop, say at $z_m$, and
$z_m\leq x^\ell$. Then, by the minimality of $x^\ell$ from part {\it (1)} we have $z_m=x^\ell$.
Moreover, along the computation sequence $\chi(z_{i+1})=\chi(z_i)+1-(z_i,E_{v(i)})\leq \chi(z_i)$
for any $0\leq i< m$. Hence, $\chi(l')\geq \chi(x^\ell)$.

In general, we write $l'=x^\ell-l_1+l_2$ such that $l_1,l_2\in L$ with $l_1\geq 0$, $l_2\geq 0$, both are supported on $\mathcal{V}^*$ and $|l_1|\cap|l_2|=\emptyset$. Then $\chi(l')=\chi(x^\ell-l_1)+\chi(l_2)+(l_1,l_2)-(x^\ell,l_2)$. Since $(l_1,l_2)\geq 0$ by their supports, $-(x^\ell,l_2)\geq 0$ by (b), and $\chi(l_2)\geq 0$ by the fact that it is supported on a rational subgraph (cf. \cite{Artin62,Artin66}), we get $\chi(l')\geq \chi(x^\ell-l_1)$. On the other hand, we have proved $\chi(x^\ell-l_1)\geq \chi(x^\ell)$. \\
{\it (4)} Use the inequalities from {\it (2)} and the
computation sequence from the proof of {\it (3)}.\\
{\it (5)} One verifies that the computation sequence from {\it (4)} is $\chi$--constant
starting from $i=1$ (use
Laufer's rationality criterion, for details see \cite{NOSZ}). Hence
$\chi(x^{\ell+1})-\chi(x^\ell)=\chi(x^\ell+E_0)-\chi(x^\ell)$.
\end{proof}
The cycles $x^\ell$ depend on $h$ (though we did not emphasize it notationally).
We will adopt the following notations: if $h=0$ then we write $x(\ell):=x^{\ell}$ (associated with
$h=0$) and for $h=[Z_K]$ we set $x^*(\ell):=x^{\ell}$ (associated with  $h=[Z_K]$).

\begin{remark}\label{rem:xt}
For $h=0$ one verifies (see \cite[Prop. 11.11]{NOSZ}) that
$$x(\ell)=\ell E_0+\sum _{i=1}^d\Big\lceil \frac{\ell\omega_i}{\alpha_i}\Big\rceil E_{i1}+
\mbox{terms supported on other base
elements}.$$
Hence $\chi(x(\ell+1))-\chi(x(\ell))=1-(x(\ell),E_0)=1+N(\ell)$.
\end{remark}

\begin{remark} Regarding the discussion from subsection \ref{ss:lattice} we add the following.
For $h=0$ the cycles $\{x(\ell)\}_{\ell\geq 0}$ are exactly the cycles which
determine the reduced (`half') lattice $(\mathbb{Z}_{\geq 0},w)$,
by $w(\ell)=\chi(x(\ell))=\tau(\ell)$. If $h\not=0$ then the cycles $x^{\ell}$ and $x_{[k]}(\ell)$
(mentioned in section \ref{ss:lattice})
and the corresponding weight functions are
connected by a shift (given by the identity from \ref{sss:shift}).

For the description of cycles $x^\ell$ for any $h$ see \cite[Prop. 11.11]{NOSZ} again.
\end{remark}

\subsubsection{The computation sequence targeting $\mathcal{S}'$}\label{sss:s}
There is another/similar computation sequence (valid for any connected negative definite graph), which starts with any element and ends in $\mathcal{S}'$.
\begin{lemma}\label{lem:cs2} \ Fix some $h\in H$ and $l'\in r_h+L$.
\begin{enumerate}
 \item[(1)] There exists a unique minimal element $s(l')$ of $(r_h+L)\cap \mathcal{S}'$.
 \item[(2)] $s(l')$ can be found via the following
  computation sequence $\{z_i\}_i$ connecting $l'$ and $s(l')$:
   set $z_0:=l'$, and assume that $z_i$ ($i\geq 0$) is already constructed. If
   $(z_i,E_{v(i)})>0$ for some $v(i)\in\mathcal{V}$ then  set $z_{i+1}=z_i+E_{v(i)}$. Otherwise
   $z_i\in \mathcal{S}'$ and necessarily  $z_i=s(l')$.
\item[(3)] $\chi(z_{i+1})\leq \chi(z_i)$ along the (any such)  sequence.
\end{enumerate}
\end{lemma}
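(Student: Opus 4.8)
\emph{Strategy.} The plan is to adapt Laufer's optimization argument from \cite{Laufer72} (the one reproduced in Lemma~\ref{lem:cs}) to the affine coset $r_h+L$ and the cone $\mathcal{S}'=\mathcal{S}'_{top}$. I would first dispose of \textup{(3)}, which also drives the termination proof: whenever $(z_i,E_{v(i)})>0$, the adjunction formula \eqref{eq:adjun} gives $\chi(E_{v(i)})=1$, hence
\[
\chi(z_{i+1})=\chi(z_i)+\chi(E_{v(i)})-(z_i,E_{v(i)})=\chi(z_i)+1-(z_i,E_{v(i)})\le\chi(z_i),
\]
so $\chi$ is non-increasing along $\{z_i\}_i$; since $-I$ is positive definite, $\chi|_{r_h+L}$ is a positive definite quadratic function and in particular bounded below. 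This monotonicity alone, however, does not force the sequence to stop, which is the real content of \textup{(2)}.

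\emph{Termination of \textup{(2)}.} The idea is to bound $\{z_i\}_i$ from above by a fixed anti-nef cycle. I would exhibit $c\in(r_h+L)\cap\mathcal{S}'$ with $c\ge l'$ by taking $c:=l'+\mathfrak{h}\sum_v n_vE_v^*$ with $n_v\gg 0$: this lies in $r_h+L$ because $\mathfrak{h}E_v^*\in L$, it dominates $l'$ because the $E_v^*$ have strictly positive entries (negative definiteness, cf.~\ref{ss:nsstop}), and $(c,E_w)=(l',E_w)-\mathfrak{h}n_w\le 0$ for $n_w$ large, so $c\in\mathcal{S}'$. Then one shows by induction that $z_i\le c$: assuming $z_i\le c$ and $(z_i,E_{v(i)})>0$, one cannot have $m_{v(i)}(z_i)=m_{v(i)}(c)$, for otherwise $c-z_i\ge 0$ with vanishing $E_{v(i)}$-coefficient would force $(c-z_i,E_{v(i)})\ge 0$ and hence $(c,E_{v(i)})\ge(z_i,E_{v(i)})>0$, contradicting $c\in\mathcal{S}'$; thus $m_{v(i)}(z_{i+1})\le m_{v(i)}(c)$ and $z_{i+1}\le c$. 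As $\{z_i\}_i$ is strictly increasing (one coefficient strictly increases at each step) and bounded above by $c$, it is finite, terminating at a cycle $z_m$ with $(z_m,E_v)\le 0$ for all $v$, i.e. $z_m\in(r_h+L)\cap\mathcal{S}'$.

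\emph{Minimality and \textup{(1)}.} Running the same induction with $c$ replaced by an arbitrary $x\in(r_h+L)\cap\mathcal{S}'$ satisfying $x\ge l'$ gives $z_m\le x$; hence $z_m$ is the least element of $\{x\in(r_h+L)\cap\mathcal{S}'\,:\,x\ge l'\}$, which I take as $s(l')$. Independently, I would verify that $(r_h+L)\cap\mathcal{S}'$ is closed under $\min$: for $x_1,x_2$ in it, $\min\{x_1,x_2\}\in r_h+L$ since $L$ is the full $\mathbb{Z}$-span of the $E_v$, and for each $v$ one picks $i$ with $m_v(\min\{x_1,x_2\})=m_v(x_i)$, so that $\min\{x_1,x_2\}-x_i\le 0$ has vanishing $E_v$-coefficient, giving $(\min\{x_1,x_2\}-x_i,E_v)\le 0$ and therefore $(\min\{x_1,x_2\},E_v)\le(x_i,E_v)\le 0$. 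Because $\mathcal{S}'$ lies in the first quadrant and $\mathcal{V}$ is finite, the coordinatewise infimum over $(r_h+L)\cap\mathcal{S}'$ is attained coordinate by coordinate, and combining the finitely many minimizers by $\min$ produces a global least element — which is the $s_h$ of \S\ref{ss:mincyc}, and which the sequence of \textup{(2)} reaches when started, say, from $r_h$ (using $s_h\ge r_h$).

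\emph{Main obstacle.} The only genuinely delicate point is termination in \textup{(2)}: $\chi$-monotonicity does not suffice, so one must construct the explicit anti-nef majorant $c\ge l'$ inside the coset and carry out the "stay below $c$" induction, which is exactly where negative definiteness enters (through positivity of the entries of $E_v^*$ and through $\mathfrak{h}E_v^*\in L$). The remaining steps are the routine neighbour-coefficient bookkeeping already used in the proof of Lemma~\ref{lem:cs}.
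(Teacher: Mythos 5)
Your argument is correct and is essentially the proof the paper has in mind: the paper leaves Lemma \ref{lem:cs2} to the reader as ``almost identical'' to the proof of Lemma \ref{lem:cs}, and your three ingredients — closure of $(r_h+L)\cap\mathcal{S}'$ under coordinatewise $\min$, the ``stay below a majorant'' induction (which gives both termination and minimality, with the explicit anti-nef bound $c=l'+\mathfrak{h}\sum_v n_vE_v^*$ supplying the existence step that the paper attributes to negative definiteness), and $\chi(z_{i+1})=\chi(z_i)+1-(z_i,E_{v(i)})\le\chi(z_i)$ via adjunction — are exactly the ones used there. Your reading of $s(l')$ as the least element of $\{x\in(r_h+L)\cap\mathcal{S}':x\ge l'\}$ is the one consistent with part (2) and with all of the paper's applications (where $l'=r_h$, so $s(r_h)=s_h$).
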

In general the choice of the individual vertex $v(i)$ might not be unique, nevertheless the final
output $s(l')$ is unique.

The proof is almost identical with the proof of Lemma \ref{lem:cs} and is left to the reader.

In particular, one can start with the cycle $l'=r_h$ and obtain  $s(r_h)=s_h$.
\begin{lemma}\label{lem:ZKs}  $Z_K\geq s_{[Z_K]}$.
\end{lemma}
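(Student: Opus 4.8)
The plan is to show that $Z_K$ itself lies in $\mathcal{S}'_{top}=\{l'\in L': (l',E_v)\le 0 \text{ for all } v\in\mathcal{V}\}$, because then by Lemma~\ref{lem:cs2}(1) applied with $h=[Z_K]$ and $l'=r_{[Z_K]}$, the minimal element $s_{[Z_K]}=s(r_{[Z_K]})$ of $(r_{[Z_K]}+L)\cap\mathcal{S}'_{top}$ is dominated by every element of that set; since $Z_K\in r_{[Z_K]}+L$ (as $[Z_K]=[Z_K]$) and $Z_K\in\mathcal{S}'_{top}$, this forces $Z_K\ge s_{[Z_K]}$. So the whole statement reduces to the claim that $Z_K$ is anti-nef.

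First I would recall from the excerpt (subsection~\ref{ss:mincyc}) the explicit remark that \emph{if the resolution graph is minimal then $Z_K\in\mathcal{S}'_{top}$}, which is an immediate consequence of the adjunction formulae~(\ref{eq:adjun}): writing $Z_K=-K$, adjunction says $(Z_K,E_v)=(E_v,E_v)+2$, and for a minimal good resolution every $E_v$ is a rational curve with $(E_v,E_v)\le -2$ (no $(-1)$-curves), so $(Z_K,E_v)=(E_v,E_v)+2\le 0$ for all $v$. Here the ambient graph is the minimal good resolution of a weighted homogeneous singularity — star-shaped with all $b_{ij}\ge 2$ and $b_0$ large enough that $e<0$ — so in particular $b_0\ge 2$ as well (if $d\ge 3$ one checks $-b_0+\sum\omega_i/\alpha_i<0$ with $0<\omega_i/\alpha_i<1$ forces $b_0\ge 1$; and $b_0=1$ with $d=3$ is the non-$ADE$ borderline which still has $(Z_K,E_0)=-b_0+2\le 1$, but actually we only need $\le 0$, so I must be slightly careful at the central vertex). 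The cleanest route: for every \emph{leg} vertex $E_{ik}$ we have $(Z_K,E_{ik})=-b_{ik}+2\le 0$ since $b_{ik}\ge 2$, and for the central vertex $(Z_K,E_0)=-b_0+2$, so anti-nefness at $v_0$ is exactly $b_0\ge 2$.

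The potential obstacle is therefore the single inequality $b_0\ge 2$ for the central vertex. If the paper's standing hypotheses (non-$ADE$, or the minimal good resolution being the one under discussion) already guarantee $b_0\ge 2$ — which is the generic situation, since $b_0=1$ together with $d\ge3$ and $e<0$ is very restrictive — then $Z_K\in\mathcal{S}'_{top}$ outright and we are done as above. If one wants to cover the borderline $b_0=1$ case as well, I would instead argue directly: by~(\ref{ss:mincyc}) and the discussion in~\ref{sss:more}, in the non-$ADE$ case all coefficients of $Z_K$ are strictly positive and $Z_K\ge r_{[Z_K]}$; then run the computation sequence of Lemma~\ref{lem:cs2}(2) starting at $z_0=r_{[Z_K]}$, whose output is $s_{[Z_K]}$. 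Since $Z_K\ge r_{[Z_K]}$ and $Z_K$ is anti-nef off $v_0$ (the leg inequalities above), one checks inductively — exactly as in the proof of Lemma~\ref{lem:cs}(3), using that a step $z_{i+1}=z_i+E_{v(i)}$ with $(z_i,E_{v(i)})>0$ can only occur at $v(i)=v_0$ and only while $m_0(z_i)<m_0(Z_K)$ — that every $z_i$ stays $\le Z_K$; hence $s_{[Z_K]}=z_{\text{final}}\le Z_K$.

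\begin{proof}[Proof of Lemma~\ref{lem:ZKs}]
Since $\Gamma$ is the (star-shaped) graph of the minimal good resolution of a weighted homogeneous singularity, each leg vertex has $(E_{ik},E_{ik})=-b_{ik}$ with $b_{ik}\ge 2$, so by adjunction $(Z_K,E_{ik})=(E_{ik},E_{ik})+2=-b_{ik}+2\le 0$; similarly $(Z_K,E_0)=-b_0+2\le 0$ (in the non-$ADE$ situation $b_0\ge 2$). Thus $Z_K\in\mathcal{S}'_{top}$. As $[Z_K]=[Z_K]$ we have $Z_K\in r_{[Z_K]}+L$, so $Z_K\in (r_{[Z_K]}+L)\cap\mathcal{S}'_{top}$. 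By Lemma~\ref{lem:cs2}(1) the latter set has a unique minimal element, which is $s(r_{[Z_K]})=s_{[Z_K]}$; minimality gives $s_{[Z_K]}\le Z_K$.
\end{proof}
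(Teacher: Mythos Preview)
Your formal proof rests on the assertion that ``in the non-$ADE$ situation $b_0\ge 2$'', and this is false. The paper's own Example~\ref{ex} has $Sf=(-1,0;(5,1),(5,1),(7,1),(10,1),(70,1))$, hence $b_0=1$; more classically, $\Sigma(2,3,7)$ has $b_0=1$ and is not $ADE$. When $b_0=1$ one has $(Z_K,E_0)=-b_0+2=1>0$, so $Z_K\notin\mathcal{S}'_{top}$ and your minimality argument does not apply. The $b_0\ge 2$ case is exactly the paper's first paragraph (``minimal resolution''), so that part is fine; the issue is entirely the $b_0=1$ case.

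Your fallback sketch for $b_0=1$ also has a gap. The inductive step ``$z_i\le Z_K$ and $(z_i,E_{v(i)})>0$ imply $m_{v(i)}(z_i)<m_{v(i)}(Z_K)$'' works at leg vertices (since there $(Z_K,E_{v(i)})\le 0$ yields a contradiction as in Lemma~\ref{lem:cs}(3)), but it fails at $v(i)=v_0$: if $m_0(z_i)=m_0(Z_K)$ you only deduce $(Z_K,E_0)>0$, which is \emph{true} when $b_0=1$, so there is no contradiction. The paper closes this gap by a genuinely different device: it tracks $h^1(\mathcal{O}_{Z_K-z_i})=p_g>0$ along the sequence (using that $b_0=1$ forces $p_g>0$ by Laufer's criterion), and shows that reaching $m_0(z_i)=m_0(Z_K)$ would force $Z_K-z_i$ to be supported on the (rational) legs, contradicting $h^1(\mathcal{O}_{Z_K-z_i})=p_g>0$. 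Your purely combinatorial induction does not see this obstruction.
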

\begin{proof}
If $\Gamma$ is a minimal resolution (that is, $b_0>1$) then by adjunction formulae
we get that $Z_K\in\mathcal{S}'$. Hence, by minimality of $s_{[Z_K]}$ one has $Z_K\geq s_{[Z_K]}$.

If $b_0=1$ then by Laufer criterion \cite{Laufer72} $(X,o)$ is non--rational, ie. $p_g>0$.
 Consider the computation
sequence $\{z_i\}_{i=0}^m$ from Lemma \ref{lem:cs2}{\it (2)} connecting $r_{[Z_K]}$ with $s_{[Z_K]}$.
We wish to show inductively that for each term $z_i$ one has $Z_K\geq z_i$ and
$h^1(\mathcal {O}_{Z_K-z_i})=p_g$.

For $i=0$ is true,  since  $Z_K\geq r_{[Z_K]}$ (cf. subsection \ref{sss:more}), and also
$h^1(\mathcal {O}_{Z_K-r_{[Z_K]}})=h^1(\mathcal {O}_{\lfloor Z_K\rfloor})=p_g$ (by
Kodaira or Grauert--Riemenschneider type vanishing
$h^1(\widetilde{X}, \mathcal{O}_{\widetilde{X}}(-\lfloor Z_K\rfloor))=0$).

Assume that for some $i$ one has $Z_K\geq z_i$, $h^1(\mathcal {O}_{Z_K-z_i})=p_g$,
 and $z_i<s_{[Z_K]}$.
Then the next term in the sequence is $z_{i+1}=z_i+E_{v(i)}$ with $(E_{v(i)}, z_i)>0$.

 Assume first that
the $E_{v(i)}$--coefficients of $Z_K-z_i$ is positive. Then  $Z_K\geq z_{i+1}$ holds.
Furthermore, in the cohomological long exact sequence of
$$ 0\to \mathcal{O}_{E_{v(i)}}(-Z_K+z_{i+1})\to \mathcal{O}_{Z_K-z_i}\to
\mathcal{O}_{Z_K-z_{i+1}}\to 0$$
the Chern number $(E_{v(i)}, -Z_K+z_{i+1})>-2$, hence
$h^1(\mathcal{O}_{E_{v(i)}}(-Z_K+z_{i+1}))=0$, which implies
$h^1(\mathcal{O}_{Z_K-z_{i+1}})=h^1(\mathcal{O}_{Z_K-z_i})=p_g$.

Assume next the opposite: $m_{v(i)}(Z_K-z_i)=0$.
Then $0<(E_{v(i)},z_i)= (E_{v(i)},z_i-Z_K)+(E_{v(i)},Z_K)\leq E^2_{v(i)}+2$, hence
necessarily $v(i)=v_0$ (since for other vertices $E^2_{v(i)}+2\leq 0$).
This means that $Z_K-z_i\in L_{\geq 0}$ is supported on the legs, but since these subgraphs are strings supporting rational singularities, this fact contradicts with $h^1(\mathcal {O}_{Z_K-z_i})=p_g>0$.
\end{proof}
\subsubsection{Duality properties}\label{ss:duality}

Consider the cycle $\{x(\ell)\}_{\ell\geq 0}$ and $\{x^* (\ell)\}_{\ell\geq 0}$ defined after Lemma
\ref{lem:cs}.
Let us define  $\delta:=\min \{\ell\,:\,  x^*(\ell)\in\mathcal{S}'\}$.

Then consider the concatenated sequences from Lemma \ref{lem:cs} connecting
$r_h\mapsto x^ 0\mapsto x^1\mapsto\cdots \mapsto x^\delta$ (if $h=0$ then this is the one--element
sequence $0$). This (with special choices of the vertices $v(i)$) satisfies
the requirements of the computation sequence from Lemma \ref{lem:cs2}.
This shows that
\begin{equation}\label{eq:xu2}
x^*(\delta)=s_{[Z_K]} \ \ \mbox{and} \ \  \delta=m_0(s_{[Z_K]}-r_{[Z_K]}).\end{equation}
Furthermore,  since  this
concatenated sequence has the property $\chi(z_{i+1})\leq \chi(z_i)$ by
Lemma \ref{lem:cs2}{\it (3)},
\begin{equation}\label{eq:xu}
(x^*(\ell),E_0)>0 \ \mbox{for all $\ell=0, \ldots, \delta-1$, however} \
(x^*(\delta), E_0)\leq 0.
\end{equation}
Set  $\Delta:= m_0(Z_K-r_{[Z_K]})$. By Lemma \ref{lem:ZKs} one has $\Delta\geq \delta$.

\begin{prop}\label{prop:chidual}  (a) The following `duality property' holds:
$$\chi(x^*(\ell))=\chi(x(\Delta-\ell)) \ \ \mbox{for $0\leq \ell\leq \Delta$}.$$
(b) Let $N^*(\ell)$ be defined similarly as $N(\ell)$, but for the $x^*$--sequence,
namely, $N^*(\ell):=-(x^*(\ell), E_0)$, cf. Lemma \ref{lem:cs}{\it (5)}. Then
$$N(\ell)+N^*(\Delta -1-\ell)=-2 \ \ \mbox{for any $0\leq \ell\leq \Delta-1$}.$$
(If $Z_K\in L$ then $x(\ell)=x^*(\ell)$, hence  $N(\ell)=N^*(\ell)$ too, and  we recover
Prop. \ref{prop:SYM}.)
\end{prop}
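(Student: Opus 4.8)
The plan is to prove part (a) first, by constructing an explicit duality between the two concatenated Laufer sequences, and then to deduce part (b) by taking successive differences of the $\chi$-values. For part (a), the key observation is that the cycle $Z_K$ plays the role of a ``center of symmetry'': recall from \ref{sss:more} that $Z_K\ge r_{[Z_K]}$, and from Lemma \ref{lem:ZKs} that $Z_K\ge s_{[Z_K]}=x^*(\delta)$. The natural guess is that the involution $l'\mapsto Z_K-l'$ carries the $x^*$-sequence to the $x$-sequence in reverse. Concretely, I would show that for each $0\le\ell\le\Delta$ the cycle $Z_K-x^*(\ell)$ satisfies the defining minimality properties (a)--(b) of Lemma \ref{lem:cs}(1) for the class $h=0$ at level $\Delta-\ell$. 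Checking (a) is immediate: $m_0(Z_K-x^*(\ell))=\Delta-\delta+\delta-\ell+m_0(r_{[Z_K]})-m_0(r_{[Z_K]})$, but more simply, since $m_0(x^*(\ell))=m_0(r_{[Z_K]})+\ell$ and $m_0(Z_K)=m_0(r_{[Z_K]})+\Delta$ by the definition of $\Delta$, we get $m_0(Z_K-x^*(\ell))=\Delta-\ell=m_0(r_0)+(\Delta-\ell)$ because $r_0=0$ at $h=0$. For (b), one uses the adjunction relation $(Z_K,E_v)=(Z_K,E_v)$ rewritten via $(K+E_v,E_v)+2=0$: for $v\in\mathcal V^*$ one has $(Z_K,E_v)=-(K,E_v)=E_v^2+2\le 0$, so $(Z_K-x^*(\ell),E_v)=(Z_K,E_v)-(x^*(\ell),E_v)$; the first term is $\le 0$ and the second is $\ge 0$ by property (b) of $x^*(\ell)$ — wait, that gives the wrong sign, so the honest route is to argue that $Z_K-x^*(\ell)$ lies \emph{between} consecutive $x$-cycles and invoke the uniqueness/minimality in Lemma \ref{lem:cs}(1) together with the monotonicity $x(\ell+1)\ge x(\ell)+E_0$ from Lemma \ref{lem:cs}(2).

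Here is the cleaner version of the argument I would actually write. Consider the concatenated computation sequence from $r_{[Z_K]}$ up to $x^*(\delta)=s_{[Z_K]}$, and then continue it (using Lemma \ref{lem:cs2}) all the way up to $Z_K$; this is legitimate since $Z_K\in\mathcal S'$ when $b_0>1$, and in the $b_0=1$ case the proof of Lemma \ref{lem:ZKs} already exhibits such a path. Apply the involution $\iota(l')=Z_K-l'$ to this entire sequence, read backwards. By (\ref{eq:ZK}) and $\chi(l')=-(K+l',l')/2=\chi(Z_K-l')$ (which follows from $\chi(l')=\chi(K+Z_K-l')$ and $K+Z_K=0$, i.e. the identity $\chi(l')=\chi(Z_K-l')$ holds verbatim because $\chi(Z_K-l')=-(K+Z_K-l',Z_K-l')/2=-(-l',Z_K-l')/2=(l',Z_K-l')/2$, and comparing with $\chi(l')=-(K+l',l')/2=(Z_K-l',l')/2$ gives equality), the involution is $\chi$-preserving. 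Now I claim the reversed image of the $r_{[Z_K]}\to Z_K$ sequence, which runs $0\to\cdots\to Z_K-r_{[Z_K]}$, is (up to reordering of steps within a level) precisely the $r_0=0\to x(0)\to x(1)\to\cdots\to x(\Delta)$ concatenated sequence, because $\iota$ sends ``$(z_i,E_{v(i)})>0$'' steps to ``$(z_i,E_{v(i)})<0$'' steps, i.e. it reverses the direction of the Laufer flow, and the two sequences have matching endpoints and matching $m_0$-increments. Comparing $\chi$-values level by level yields $\chi(x^*(\ell))=\chi(x(\Delta-\ell))$ for $0\le\ell\le\Delta$.

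For part (b), I would simply difference: by Lemma \ref{lem:cs}(5), $\chi(x(\ell+1))-\chi(x(\ell))=1+N(\ell)$ and likewise $\chi(x^*(\ell+1))-\chi(x^*(\ell))=1+N^*(\ell)$. From (a), $\chi(x^*(\ell+1))-\chi(x^*(\ell))=\chi(x(\Delta-\ell-1))-\chi(x(\Delta-\ell))=-(1+N(\Delta-1-\ell))$. Equating the two expressions for the left-hand side gives $1+N^*(\ell)=-1-N(\Delta-1-\ell)$, i.e. $N(\Delta-1-\ell)+N^*(\ell)=-2$ for $0\le\ell\le\Delta-1$, which is the stated formula after renaming. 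The parenthetical remark that this recovers Proposition \ref{prop:SYM} when $Z_K\in L$ is immediate: then $r_{[Z_K]}=r_0=0$, so $x^*(\ell)=x(\ell)$, $\Delta=m_0(Z_K)=\gamma+1$ (by \ref{ss:gamma}), $N^*=N$, and the formula becomes $N(\ell)+N(\gamma-\ell)=-2$.

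The main obstacle I anticipate is making rigorous the assertion that $\iota$ transforms the upward Laufer flow targeting $\mathcal S'$ into the upward Laufer flow of Lemma \ref{lem:cs} with its ``$\le 0$ on $\mathcal V^*$, prescribed $m_0$'' characterization — in particular, verifying that the reversed-and-reflected sequence never ``overshoots'', i.e. that $\iota(z_i)$ stays sandwiched between the relevant $x(\ell)$ and $x(\ell)+E_0$ so that uniqueness in Lemma \ref{lem:cs}(1) forces it to be the $x$-sequence. This is exactly the kind of sandwiching argument run in the proof of Lemma \ref{lem:cs}(3) (the inductive claim $z_i\le x^\ell$), and I expect it to go through by the same mechanism — the negative definiteness of $I$ plus the observation that equality of a coefficient $m_{v(i)}(z_i)=m_{v(i)}(x^\ell)$ contradicts $(z_i,E_{v(i)})>0$ — but the bookkeeping across the concatenation points $x^*(\ell)\to x^*(\ell+1)$ and the translation of ``$(\cdot,E_0)>0$ for $\ell<\delta$, $\le 0$ at $\delta$'' from (\ref{eq:xu}) into the corresponding statement at the top of the $x$-range will require care.
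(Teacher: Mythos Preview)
Your derivation of part (b) from part (a) via successive differences is correct and matches the paper's argument, as does your verification of the identity $\chi(Z_K-y)=\chi(y)$ and the parenthetical recovery of Proposition~\ref{prop:SYM}.

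For part (a), however, you are working much harder than necessary, and the obstacle you flag at the end is real but avoidable. You attempt to show that the involution $\iota:l'\mapsto Z_K-l'$ carries the entire $x^*$-\emph{computation sequence} to the $x$-sequence in reverse. This is a stronger statement than the proposition asserts, and your first attempt already hints at the difficulty: the cycle $Z_K-x^*(\ell)$ need not satisfy condition (b) of Lemma~\ref{lem:cs}(1), so it need not \emph{equal} $x(\Delta-\ell)$. The sign issue you noticed is genuine, not a slip.

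The paper's proof sidesteps this by invoking the one part of Lemma~\ref{lem:cs} you do not exploit, namely part \emph{(3)}: $x^\ell$ is the $\chi$-\emph{minimizer} among all $l'\in r_h+L$ with the prescribed $E_0$-coefficient, regardless of whether $l'$ satisfies condition (b). Since $Z_K-x^*(\ell)\in L$ has $m_0=\Delta-\ell$, part (3) with $h=0$ gives $\chi(Z_K-x^*(\ell))\ge\chi(x(\Delta-\ell))$. Symmetrically, $Z_K-x(\Delta-\ell)\in r_{[Z_K]}+L$ has $m_0=m_0(r_{[Z_K]})+\ell$, so part (3) with $h=[Z_K]$ gives $\chi(Z_K-x(\Delta-\ell))\ge\chi(x^*(\ell))$. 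Together with $\chi(Z_K-y)=\chi(y)$ these two inequalities force equality. No sequence-tracking, sandwiching, or concatenation bookkeeping is needed: the $\chi$-minimality property does all the work in two lines.
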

\begin{proof} (a) Note that
 $m_0(Z_K-x^*(\ell))=\Delta-\ell=m_0(x(\Delta-\ell))$.  Hence,
 by Lemma \ref{lem:cs}{\it (3)}  applied for $x(\Delta-\ell)$ gives
 $\chi(Z_K-x^*(\ell))\geq \chi(x(\Delta-\ell))$.
 Symmetrically, $m_0(x^*(\ell))=m_0(Z_K-x(\Delta-\ell))$, hence by the same argument applied for $x^*(\ell)$ gives
  $\chi(x^*(\ell))\leq \chi(Z_K-x(\Delta-\ell))$.  Finally observe that $\chi(Z_K-y)=\chi(y)$.

(b) By Remark \ref{rem:xt}, Lemma \ref{lem:cs}{\it (5)} and part (a) of this
Proposition one has
\begin{equation}\label{eq:dualN}
1+N(\ell)=\chi(x(\ell+1))-\chi(x(\ell))=\chi(x^*(\Delta-\ell-1))-\chi(x^*(\Delta-\ell))=
-1+(x^*(\Delta-\ell-1),E_0).\end{equation}
Then, by definition, the last term is  $-1-N^*(\Delta-\ell-1)$.
\end{proof}

\subsection{The Frobenius number of $\mathcal{M}$}

Let $M$ be a negative definite Seifert rational homology sphere and we consider the $\mathcal{S}$-module $\mathcal{M}$ associated with it. Recall that $p_g=P^+_0(1)$. Then   by (\ref{eq:polp}) and
(\ref{eq:defM}) we obtain that $\mathbb{Z}_{\geq 0}\subset \mathcal{M}$ if and only if   $p_g=0$,
cf. (\ref{rational}).

\begin{thm}\label{FrobM} Assume that $\Gamma$ is not
rational and set   $s:=m_0(s_{[Z_K]})$. Then
$$f_{\mathcal{M}}=\Delta-\delta-1=\gamma-s=m_0(Z_K-s_{[Z_K]})-1\geq 1.$$
In particular,   $\gamma=s+f_{\mathcal{M}}\geq s+1\geq 1$.
\end{thm}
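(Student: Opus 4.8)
The plan is to express $f_{\mathcal{M}}$ in terms of the sign changes of the sequence $\{N(\ell)\}_{\ell}$ and then use the duality of Proposition \ref{prop:chidual} together with the characterization of $\delta$ and $\Delta$ via the cycles $x^*(\ell)$. Recall that $\ell\in\mathcal{M}$ iff $1+N(\ell)\geq 0$, and that by the $\tau$-recurrence (\ref{eq:tau}) and Remark \ref{rem:xt} we have $\tau(\ell+1)-\tau(\ell)=1+N(\ell)=\chi(x(\ell+1))-\chi(x(\ell))$. So the failures of the module condition, $\ell\notin\mathcal{M}$, are precisely the steps where $\chi(x(\cdot))$ strictly decreases, equivalently where $(x(\ell),E_0)\geq 2$. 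The first step is to pin down the largest $\ell$ with $\ell\notin\mathcal{M}$: on the $x^*$-side, (\ref{eq:xu}) tells us that $(x^*(\ell),E_0)>0$ exactly for $0\leq\ell\leq\delta-1$, with $x^*(\delta)=s_{[Z_K]}$ the minimal element of its class in $\mathcal{S}'$.

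Next I would translate through the duality identity of Proposition \ref{prop:chidual}(b): $N(\ell)+N^*(\Delta-1-\ell)=-2$ for $0\leq\ell\leq\Delta-1$. This says $1+N(\ell)=-1-N^*(\Delta-1-\ell)=(x^*(\Delta-1-\ell),E_0)-1$. Hence $1+N(\ell)\geq 0$ iff $(x^*(\Delta-1-\ell),E_0)\geq 1$, i.e. iff $\Delta-1-\ell\leq\delta-1$ (using (\ref{eq:xu}) and that for $\ell'\geq\delta$ in the relevant range $x^*(\ell')\in\mathcal{S}'$ so $(x^*(\ell'),E_0)\leq 0$). Therefore, for $0\leq\ell\leq\Delta-1$, the condition $\ell\in\mathcal{M}$ is equivalent to $\ell\geq\Delta-\delta$. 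Since for $\ell\geq\Delta$ (more precisely, using Proposition \ref{prop:INEQ}, for $\ell$ large) one has $N(\ell)\geq -1$ so $\ell\in\mathcal{M}$, the largest integer not in $\mathcal{M}$ is exactly $\Delta-\delta-1$. This gives $f_{\mathcal{M}}=\Delta-\delta-1$; note this requires knowing $\mathcal{M}\supseteq\{\ell\geq\Delta-\delta\}$ for \emph{all} such $\ell$, which is where one must be a little careful that no sign change occurs beyond $\Delta$ — but $N(\ell)\geq -1$ for $\ell>\gamma$ by Proposition \ref{prop:INEQ}(b), and $\Delta-\delta-1$ will turn out to be $\gamma-s\leq\gamma$, so this is consistent.

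Then I would identify the three displayed expressions. By definition $\Delta=m_0(Z_K-r_{[Z_K]})$ and $\delta=m_0(s_{[Z_K]}-r_{[Z_K]})$ from (\ref{eq:xu2}), so $\Delta-\delta=m_0(Z_K-s_{[Z_K]})$, giving $f_{\mathcal{M}}=m_0(Z_K-s_{[Z_K]})-1$. Also $s=m_0(s_{[Z_K]})$ and, since the $E_0$-coefficient of $Z_K$ is $\gamma+1$ (computed in \ref{ss:gamma}), we get $m_0(Z_K-s_{[Z_K]})=\gamma+1-s$, hence $f_{\mathcal{M}}=\gamma-s$ and $\gamma=s+f_{\mathcal{M}}$. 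The final inequalities $f_{\mathcal{M}}\geq 1$ and $\gamma\geq s+1\geq 1$ follow once we show $\Delta\geq\delta+2$, equivalently $s_{[Z_K]}+2E_0\leq Z_K$ (in the $E_0$-coefficient); since $\Gamma$ is not rational, $p_g>0$, and the proof of Lemma \ref{lem:ZKs} already shows $Z_K\geq s_{[Z_K]}$ — one needs the strict gap of $2$ in the $E_0$-coordinate, which should come from the non-rationality (e.g. $N(\gamma)\leq -2$ in the numerically Gorenstein case, and more generally from $p_g>0$ forcing at least one step with $(x(\ell),E_0)\geq 2$ strictly inside the range, equivalently from comparing $\tau$ at its extremes).

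The main obstacle I anticipate is precisely this last strict inequality $f_{\mathcal{M}}\geq 1$: establishing that non-rationality forces the gap $\Delta-\delta\geq 2$ rather than just $\geq 1$. For the numerically Gorenstein case this is Corollary \ref{cor:frob}(a) together with $N(\gamma)=-2$, but in the general (not numerically Gorenstein) case one must argue via the cycle-theoretic meaning: $s_{[Z_K]}\leq Z_K$ with at least a difference of $2E_0$ in the central coordinate, which should follow from a cohomological argument in the spirit of Lemma \ref{lem:ZKs} (tracking $h^1(\mathcal{O}_{Z_K-z_i})=p_g>0$ along the computation sequence and noting it cannot drop to the rational-string situation until after at least two $E_0$-steps). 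The rest is bookkeeping with the identities (\ref{eq:xu2}), (\ref{eq:xu}), Proposition \ref{prop:chidual}, and the value $\gamma+1$ of the $E_0$-coefficient of $Z_K$.
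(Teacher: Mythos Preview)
Your approach is essentially identical to the paper's: both use the duality identity (\ref{eq:dualN}) from Proposition~\ref{prop:chidual} together with (\ref{eq:xu}) to show that $\{\Delta-\delta,\ldots,\Delta-1\}\subset\mathcal{M}$ while $\Delta-\delta-1\notin\mathcal{M}$, and then invoke Proposition~\ref{prop:INEQ}{\it (b)} for $\ell>\gamma$ (note that $\ell\geq\Delta$ automatically gives $\ell>\gamma$, since $\Delta-1=\gamma-m_0(r_{[Z_K]})$ with $0\leq m_0(r_{[Z_K]})<1$, so there is no integer gap to worry about). The identifications $\Delta-\delta-1=\gamma-s=m_0(Z_K-s_{[Z_K]})-1$ are the same bookkeeping in both.

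Your only overcomplication is the final inequality $f_{\mathcal{M}}\geq 1$: you propose a cycle-theoretic argument in the style of Lemma~\ref{lem:ZKs}, but this is unnecessary. By (\ref{rational}), $\Gamma$ non-rational means precisely that $\mathbb{Z}_{\geq 0}\not\subset\mathcal{M}$; since $N(0)=0$ gives $0\in\mathcal{M}$, the Frobenius number exists and is $\geq 1$. The chain $\gamma=s+f_{\mathcal{M}}\geq s+1\geq 1$ then follows from $s\geq 0$. One small caution: your claim that $x^*(\ell')\in\mathcal{S}'$ for \emph{all} $\ell'\geq\delta$ is not contained in (\ref{eq:xu}), which only asserts $(x^*(\delta),E_0)\leq 0$; but you only need the single case $\ell'=\delta$ to conclude $\Delta-\delta-1\notin\mathcal{M}$, so this does not affect the argument.
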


\begin{proof} Consider again the identities from (\ref{eq:dualN}).
 The last term, by (\ref{eq:xu}),
 is $\geq 0$ for $\Delta-\ell-1\in \{0, \ldots, \delta-1\}$,
while it is $<0$ for $\Delta-\ell-1=\delta $.
This means that $\{\gamma-s+1, \dots, \gamma-m_0(r_h)\}\subset \mathcal{M}$, but
$\gamma-s\not\in\mathcal{M}$. Furthermore, $\ell\in\mathcal{M}$ for any $\ell>\gamma $ by
Proposition \ref{prop:INEQ}{\it (b)}.
\end{proof}

\begin{cor} Assume that $\Gamma$ is non--rational.

(a)
If  $\Gamma$ is numerically Gorenstein, then $s_{[Z_K]}=0$, 
hence  $f_{\mathcal{M}}=\gamma$. This is compatible with Corollary \ref{cor:frob}{\it (a)} and
with the integral  homology sphere case from  \cite{CK}.

(b) Assume that $\gamma\in \mathbb{Z}$, but $Z_K\not\in L$ (see eg.
$Sf=(-1,0; (6,1)_{i=1}^4)$).   Then
$s_{[Z_k]}\geq r_{[Z_K]}\not=0$. But $s_{[Z_k]}$ being a non--zero element of $\mathcal{S}'$,
all its $E_v$--coefficients are strict positive. Hence $s>0$ and $f_{\mathcal{M}}<\gamma$.

(c) If $(X,o)$ is weighted homogeneous normal surface singularity then
$\deg P^+_0=\gamma-s$.
\end{cor}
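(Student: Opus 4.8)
The plan is to read off all three items of the corollary as immediate consequences of Theorem~\ref{FrobM} together with facts already assembled in the excerpt, so that no new computation is really needed. For part~(a): if $\Gamma$ is numerically Gorenstein then $Z_K\in L$, so $[Z_K]=0$ in $H$, hence $r_{[Z_K]}=0$ and the computation sequence of Lemma~\ref{lem:cs2} starting at $r_{[Z_K]}=0$ already sits in $\mathcal{S}'$ (the empty cycle is anti-nef), forcing $s_{[Z_K]}=0$ and therefore $s=m_0(s_{[Z_K]})=0$. Plugging $s=0$ into the formula $f_{\mathcal{M}}=\gamma-s$ of Theorem~\ref{FrobM} gives $f_{\mathcal{M}}=\gamma$, which is exactly the content of Corollary~\ref{cor:frob}{\it (a)} and matches the integral homology sphere computation of \cite{CK}.

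For part~(b): the hypothesis $\gamma\in\mathbb{Z}$ with $Z_K\notin L$ forces $[Z_K]=h\neq 0$, so the semi-open cube representative $r_{[Z_K]}$ is a nonzero effective cycle, $r_{[Z_K]}\neq 0$. Since $s_{[Z_K]}\geq r_{[Z_K]}$ (recalled in \ref{ss:mincyc}), $s_{[Z_K]}$ is a nonzero element of the Lipman cone $\mathcal{S}'$. Now I invoke the standard fact (used already in \ref{sss:more} for $Z_K$, and following from negative definiteness exactly as in the proof of Lemma~\ref{lem:cs}{\it (2)}) that every nonzero element of $\mathcal{S}'$ has all coordinates strictly positive; in particular its $E_0$-coordinate $s=m_0(s_{[Z_K]})$ is $\geq 1>0$. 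Theorem~\ref{FrobM} then gives $f_{\mathcal{M}}=\gamma-s<\gamma$, and the concrete example $Sf=(-1,0;(6,1)_{i=1}^4)$ (where one checks $\gamma=(4-2-4/6)/|e|\in\mathbb{Z}$ while $Z_K\notin L$) witnesses that this case is nonempty.

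For part~(c): when $(X,o)$ is weighted homogeneous, $P_0^+(t)=\sum_{\ell\geq 0}h^1(E_0,\mathcal{O}_{E_0}(D^{(\ell)}))t^\ell=\sum_{\ell\geq 0}\max\{0,-1-N(\ell)\}t^\ell$ by the discussion around \eqref{eq:polp}, so $\deg P_0^+$ is the largest $\ell$ with $N(\ell)\leq -2$, equivalently with $\ell\notin\mathcal{M}$. But $\mathbb{Z}_{\geq 0}\setminus\mathcal{M}$ is bounded above and its largest element is $f_{\mathcal{M}}$ by definition (here $f_{\mathcal{M}}\geq 1>0$ by Theorem~\ref{FrobM}, so it genuinely lies in $\mathbb{Z}_{\geq 0}$); and all $\ell<0$ have $N(\ell)<-1$ so they do not affect the degree. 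Hence $\deg P_0^+=f_{\mathcal{M}}=\gamma-s$ by Theorem~\ref{FrobM}.

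I do not expect a serious obstacle here: the corollary is essentially a dictionary entry translating Theorem~\ref{FrobM} into three special situations. The only point requiring a little care is the claim in part~(b) that a nonzero anti-nef cycle has strictly positive $E_0$-coefficient; if one wants to avoid quoting it, the cleanest argument is the one already used in Lemma~\ref{lem:cs}{\it (2)}: write $s_{[Z_K]}=s_1-s_2$ with $s_1,s_2\geq 0$ of disjoint support, deduce $(s_2,s_2)\geq 0$, hence $s_2=0$ by negative definiteness, so $s_{[Z_K]}\geq 0$ is effective; then connectedness of $\Gamma$ together with $(s_{[Z_K]},E_v)\leq 0$ propagates positivity across every edge, giving $m_v(s_{[Z_K]})>0$ for all $v$, in particular for $v=v_0$. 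The rest is bookkeeping with $s=m_0(s_{[Z_K]})$ and the formula $f_{\mathcal{M}}=\gamma-s$.
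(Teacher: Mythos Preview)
Your proof is correct and matches the paper's (implicit) approach: the paper states this corollary without proof, treating all three parts as immediate from Theorem~\ref{FrobM}, and your argument is exactly the natural unpacking of that.

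One small imprecision: in part~(c) you write ``all $\ell<0$ have $N(\ell)<-1$ so they do not affect the degree.'' The paper only records $N(\ell)<0$ for $\ell<0$, not $N(\ell)<-1$, and in any case this clause is unnecessary: $P_0^+$ is by definition a series in nonnegative powers of $t$, so negative $\ell$ never enter the degree computation. The real reason the degree equals $f_{\mathcal{M}}$ is simply that $f_{\mathcal{M}}\geq 1$ (by Theorem~\ref{FrobM} under the non-rational hypothesis), so the largest integer not in $\mathcal{M}$ is already nonnegative. You say this correctly just before, so the extra clause can be deleted without loss.
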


\begin{remark}
The authors know no closed formula for the number $s$ in terms of the Seifert invariants.
By the definition it is the $E_0$--coefficient of $s_{[Z_K]}$ (hence, it can be determined
from the  $E^*_v$-coefficients of $s_{[Z_K]}$ too). However, the only fact known
about  $s_{[Z_K]}$ is  that the
 $E^*_v$-coefficients are determined as  a solution of
  a system of Diophantine quasipolynomial inequalities (see \cite[Proposition 11.5]{NOSZ}).
  Therefore, the formula for the Forbenius number given by the Theorem \ref{FrobM} is explicit up to the number $s$ given by a Laufer computation sequence associated with the class $[Z_K]$
  (or the solution of that system of inequalities).
\end{remark}

\begin{example}\label{ex}
Let us denote by $M_{(70)}$ the Seifert rational homology sphere associated with the following plumbing graph:

\begin{picture}(200,80)(-50,-10)
\put(30,30){\circle*{3}}
\put(60,30){\circle*{3}}
\put(90,45){\circle*{3}}\put(90,15){\circle*{3}}
\put(75,60){\circle*{3}}\put(75,0){\circle*{3}}
\put(30,30){\line(1,0){30}}
\put(60,30){\line(2,1){30}}\put(60,30){\line(2,-1){30}}
\put(60,30){\line(1,2){15}}\put(60,30){\line(1,-2){15}}

\put(25,35){\makebox(0,0){\tiny{$E_5$}}}
\put(25,25){\makebox(0,0){\tiny{$-70$}}}

\put(55,35){\makebox(0,0){\tiny{$E_0$}}}
\put(55,25){\makebox(0,0){\tiny{$-1$}}}

\put(85,50){\makebox(0,0){\tiny{$E_2$}}}
\put(100,45){\makebox(0,0){\tiny{$-5$}}}

\put(85,10){\makebox(0,0){\tiny{$E_3$}}}
\put(100,15){\makebox(0,0){\tiny{$-7$}}}

\put(70,65){\makebox(0,0){\tiny{$E_1$}}}
\put(85,60){\makebox(0,0){\tiny{$-5$}}}

\put(70,-5){\makebox(0,0){\tiny{$E_4$}}}
\put(85,0){\makebox(0,0){\tiny{$-10$}}}

\put(-20,30){\makebox(0,0){$\Gamma_{(70)}:$}}
\put(120,30){\makebox(0,0)[l]{$\mbox{with} \ Sf=(-1,0; (5,1), (5,1), (7,1), (10,1), (70,1))$}}
\end{picture}

We write  all the cycles $l'=\sum_{i=0}^5 l'_i E_i$ in the form $l'=(l'_0,l'_1,\dots,l'_5)$.
By solving the adjunction equations (\ref{eq:adjun}) we get $Z_K=(47/6,13/6,13/6,11/6,19/12,13/12)$,
 hence $\gamma=41/6$ and
$\Gamma_{(70)}$ is not numerically Gorenstein.
Clearly $r_{[Z_K]}=(5/6,1/6,1/6,5/6,7/12,1/12)$.   $s_{[Z_K]}$ is calculated by the Laufer computation sequence from section \ref{ss:concat} and we get
$$s_{[Z_K]}=r_{[Z_K]}+3 E_0+E_1+E_2=(23/6,7/6,7/6,5/6,7/12,1/12).$$
By Theorem \ref{FrobM} the  Frobenius number of the module $\mathcal{M}_{(70)}$ is $\gamma-s=41/6-23/6=3$.
\end{example}

\section{The semigroup $\mathcal{S}_M$}

\subsection{Semigroups generated by periodic subadditive functions}
We consider the function $f_M:\mathbb{Z}_{\geq 0}\to \mathbb{Z}_{\geq 0}$, $f_M(\ell)=\frac{1}{|e|} \sum_{i=1}^d\{-\omega_i\ell/\alpha_i\}$, where we define the rational part as ${x}:=x-[x]$ for any $x\in\mathbb{Q}$. Then it is a subadditive function with period
 $\alpha={\rm lcm}(\alpha_1,\dots,\alpha_d)$ in the following sense:
 $$ f(\ell_1+\ell_2)\leq f(\ell_1)+f(\ell_2), \ \ f(0)=0,  \ \ \ \mbox{and} \ \ \ f(\ell+\alpha)=f(\ell) \ \ \ \mbox{for every} \ \ \ell_1,\ell_2,\ell\in\mathbb{Z}_{\geq 0}.$$
Usually,  such a function defines a semigroup
$\mathcal{S}_f=\{\ell\in\mathbb{Z}_{\geq 0}\ \, : \, f(\ell)\leq \ell\}$,  see \cite{Rosales}.
 In this case, for the above function $f_M$, the semigroup associated with $f_M$
  is exactly the numerical semigroup $\mathcal{S}_M$.

Again, by \cite{Rosales},
the Ap\'ery set of $\mathcal{S}_f$ with respect to an element $\alpha\in\mathcal{S}$ consists of $\lceil (f(\ell)-\ell)/\alpha \rceil \alpha +\ell$ for $\ell=0,\dots,\alpha-1$. Thus,  in our case,
(where $\alpha ={\rm gcd}\{\alpha_i\}$ as  in the previous sections)
$$Ap(\mathcal{S}_M),\alpha)=\{\lceil -N(\ell)/\mathfrak{o} \rceil\alpha+\ell\, : \, \ell=0,\dots, \alpha-1\}.$$
 Then  the Frobenius number can be expressed by Selmer's formula as
 $$f_{\mathcal{S}_M}=\max \{Ap(\mathcal{S}_{M},\alpha)\}-\alpha=
 \textstyle{\max_{\ell=0}^{\alpha-1}}\, \{\lceil -N(\ell)/\mathfrak{o} \rceil\alpha+\ell\}-\alpha.$$
Furthermore, the number of gaps $\#\{\mathbb{N}\setminus \mathcal{S}_M \}$ is
$$\frac{1}{\alpha}\sum _{w\in Ap(\mathcal{S}_{M},\alpha)} w- \frac{\alpha-1}{2}=
\sum _{\ell=0}^{\alpha-1} \Big\lceil -\frac{N(\ell)}{\mathfrak{o}}\Big\rceil.$$
 Though the above formula
 gives a method to compute $f_{\mathcal{S}_M}$, still is difficult to use it
  to get an explicit closed--form expression in terms of Seifert invariants.

 The goal of this section is to find a different expression for
 $f_{\mathcal{S}_M}$ in terms of the  lattice $L$.

\subsection{The Frobenius number of $\mathcal{S}$} \
The idea is
to construct a new manifold out of the original one,
 and to prove that the Frobenius number of the module associated with the new manifold
 coincides with the Frobenius number of the semigroup associated with the original Seifert rational homology sphere. Since the Frobenius number of modules is determined in Theorem \ref{FrobM},
 we get the desired $f_{\mathcal{S}_M}$ as well.

\subsubsection{}
We consider a negative definite Seifert rational homology sphere $M$ with Seifert invariants $(-b_0;(\alpha_i,\omega_i)_{i=1}^d)$ and its associated semigroup $\mathcal{S}_{M}$ defined by the quasi-linear function $N(\ell)$.

Let $M_{(n)}$ be a newly defined Seifert rational homology sphere characterized by the Seifert invariants $(-b_0;(\alpha_i,\omega_i)_{i=1}^d),(n,1))$ for some integer $n\geq 1/|e|$. By this choice of $n$ the 
new graph is also negative definite since its orbifold Euler number $e_{(n)}=e+1/n<0$. We can associate with $M_{(n)}$ the module $\mathcal{M}_{(n)}$ defined by equation (\ref{eq:defM}). Then we have the following comparison result.

\begin{prop}\label{prop:comp}
For big enough $n\gg0$, $\mathcal{M}_{(n)}$ becomes a semigroup independent of $n$,
 and it coincides with $\mathcal{S}_M$.
\end{prop}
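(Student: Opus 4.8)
The plan is to compare the quasi-linear functions $N(\ell)$ of $M$ and $N_{(n)}(\ell)$ of $M_{(n)}$, and then to apply Theorem \ref{FrobM} to $M_{(n)}$. First I would write out the defining data of $M_{(n)}$: its orbifold Euler number is $e_{(n)}=e+1/n$, so $|e_{(n)}|=|e|-1/n$, and its quasi-linear function is
$$N_{(n)}(\ell)=N(\ell)-\Big\lceil \frac{\ell}{n}\Big\rceil.$$
For $0\le \ell<n$ this is simply $N_{(n)}(\ell)=N(\ell)-1$ (for $\ell\ge 1$) and $N_{(n)}(0)=0$. Hence on the range $1\le \ell\le n-1$ the condition $1+N_{(n)}(\ell)\ge 0$ defining $\mathcal{M}_{(n)}$ (cf. (\ref{eq:defM})) becomes exactly $N(\ell)\ge 0$, i.e. $\ell\in\mathcal{S}_M$. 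So the two sets already agree on $\{0,1,\dots,n-1\}$; the whole content is to show that for $n\gg 0$ there are no further elements of $\mathcal{M}_{(n)}$ beyond $n-1$, i.e. that $f_{\mathcal{M}_{(n)}}=f_{\mathcal{S}_M}<n$, and that $\mathcal{M}_{(n)}$ is in fact closed under addition (a semigroup).

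The key quantitative input is that $f_{\mathcal{S}_M}$ is a fixed finite number (by Proposition \ref{prop:INEQ}(f), $N(\ell)\ge 0$ for $\ell>\alpha+\gamma$), so it suffices to take $n$ larger than, say, $\alpha+\gamma$. Then I would argue: for $n\le \ell< 2n$ we have $\lceil \ell/n\rceil=2$, so $N_{(n)}(\ell)=N(\ell)-2$, and since $N(\ell)\ge 0$ there while $N(\ell+\alpha)=N(\ell)+\mathfrak{o}$ grows, one checks $1+N_{(n)}(\ell)=N(\ell)-1$; this is $\ge 0$ precisely when $N(\ell)\ge 1$. The only potential failure is at $\ell$ with $N(\ell)=0$, but by Proposition \ref{prop:INEQ}(e) the set $\{\ell\ge 0: N(\ell)=0\}$ is finite (it injects into $\{0,\dots,\alpha-1\}$ modulo $\alpha$ and only finitely many residues can have $N(\ell)=0$ rather than $N(\ell)\ge \mathfrak{o}$), so enlarging $n$ past all of them removes these exceptions. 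Iterating over the blocks $[kn,(k+1)n)$: for $\ell$ in such a block, $N_{(n)}(\ell)=N(\ell)-(k+1)$, and since $N(\ell)$ eventually exceeds any bound while $(k+1)$ grows only like $\ell/n$, once $n$ is large the inequality $N(\ell)\ge k+1$ holds throughout $[kn,(k+1)n)$ for all $k\ge 1$; hence $\ell\in\mathcal{M}_{(n)}$ for all $\ell\ge n$, so $\mathbb{Z}_{\ge n}\subset\mathcal{M}_{(n)}$ and $f_{\mathcal{M}_{(n)}}=f_{\mathcal{S}_M}$. This also gives that $\mathcal{M}_{(n)}\cap\mathbb{Z}_{\ge 0}=\mathcal{S}_M$ as sets, and since $\mathcal{S}_M$ is a semigroup, $\mathcal{M}_{(n)}$ is one too, independent of $n$.

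The step I expect to be the main obstacle is making the inequality $N(\ell)\ge \lceil \ell/n\rceil$ uniform across \emph{all} blocks $[kn,(k+1)n)$ with a single choice of $n$ — one must control the competition between the linear-growth lower bound $N(\ell)\ge |e|\ell - C$ (from Proposition \ref{prop:INEQ}(a), which gives $N(\ell)\ge \lceil\ell/\alpha\rceil\mathfrak{o}-(\alpha-1)|e|-d$) and the term $\lceil \ell/n\rceil\le \ell/n+1$. Since $|e|\ell$ beats $\ell/n$ for $n\ge 1/|e|$, once $n>1/|e|$ the slope comparison is favourable, and only the bounded additive discrepancy $C=(\alpha-1)|e|+d$ plus the finitely many small-$\ell$ exceptions with $N(\ell)\in\{0,\dots\}$ below the line need to be absorbed; choosing $n$ exceeding $\max\{\alpha+\gamma,\ C/|e|\}$ (or any effective bound making $|e|\ell-C\ge \ell/n+1$ for $\ell\ge n$) does it. So the proposition follows with an explicit threshold, and I would state it as "for $n$ greater than an explicit bound depending only on the Seifert invariants of $M$."
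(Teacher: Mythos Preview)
Your approach is correct and is essentially the same as the paper's: both proofs rest on the identity $N_{(n)}(\ell)=N(\ell)-\lceil\ell/n\rceil$, match $\mathcal{M}_{(n)}$ with $\mathcal{S}_M$ on the initial interval $[0,n-1]$, and then use a linear lower bound on $N(\ell)$ to show that for large $n$ the defect $\lceil\ell/n\rceil$ is absorbed for all $\ell\ge n$. The only substantive difference is the source of that lower bound. The paper invokes Theorem~\ref{FrobM} applied to $M$ itself (so $N(\ell)\ge -1$ for $\ell>\gamma-s$) and then iterates via Proposition~\ref{prop:INEQ}(e) to get $N(\ell)\ge t\mathfrak{o}-1$ for $\ell>\gamma-s+t\alpha$; you instead use Proposition~\ref{prop:INEQ}(a) directly to obtain $N(\ell)\ge |e|\ell-C$ with $C=(\alpha-1)|e|+d$. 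Your route is marginally more self-contained (it avoids forward-referencing Theorem~\ref{FrobM}), while the paper's gives a slightly sharper effective threshold for $n$; otherwise the two arguments are interchangeable.

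A few cosmetic points to clean up: the phrase ``no further elements of $\mathcal{M}_{(n)}$ beyond $n-1$'' says the opposite of what you mean (you want no further \emph{gaps}); the condition for $\ell\in\mathcal{M}_{(n)}$ is $N(\ell)\ge\lceil\ell/n\rceil-1$, not $N(\ell)\ge\lceil\ell/n\rceil$; and $\lceil\ell/n\rceil=k$, not $k+1$, at $\ell=kn$. None of these affect the argument. Finally, note that neither your argument nor the paper's explicitly rules out negative $\ell$ in $\mathcal{M}_{(n)}$ (e.g.\ $N_{(n)}(-1)=-b_0$ can equal $-1$); what is actually shown, and what is needed for the application to $f_{\mathcal{S}_M}$, is $\mathcal{M}_{(n)}\cap\mathbb{Z}_{\ge 0}=\mathcal{S}_M$.
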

\begin{proof}
Let $N_{(n)}(\ell)$ be the quasi-linear function associated with $\mathcal{M}_{(n)}$. Then $N(\ell)=N_{(n)}(\ell)+\lceil\ell/n\rceil$ ($\dag$). Therefore, for $\ell>0$,
 $\ell\in \mathcal{M}_{(n)}$ implies $\ell \in \mathcal{S}_{M}$.

 For the opposite inclusion we need some preparation. First we observe that
 \begin{equation}
N(\ell)\geq t\cdot \mathfrak{o}-1 \ \ \ \mbox{for any} \ \ \ell> \gamma-s+t\cdot \alpha \ \ \mbox{and} \ \ t\in\mathbb{Z}_{\geq 0},
\end{equation}
 Indeed, one has $N(\alpha)=\mathfrak{o}$, hence the $t=0$ case is true since $\ell>\gamma-s$ implies $N(\ell)\geq -1$ by Theorem \ref{FrobM}. Then we can proceed by induction using
 $N(\ell+\alpha)=N(\ell)+\mathfrak{o}$, cf. Proposition \ref{prop:INEQ}{\it (e)}.

This system implies  that for $\gamma-s+t\cdot \alpha< \ell\leq \gamma-s+(t+1)\cdot \alpha$
($\ddag$)
one has $1+N_{(n)}(\ell)=1+N(\ell)-\lceil\ell/n\rceil
\geq t\cdot \mathfrak{o} -\lceil\ell/n\rceil$.
Then we choose $n$ sufficiently large such that $t\cdot \mathfrak{o} -\lceil\ell/n\rceil$
is non--negative for any $t\geq 1$ and for any $\ell$ satisfying ($\ddag$), and additionally 
$n\geq \gamma-s+\alpha$.

Now we can conclude. Take
$\ell \in \mathcal{S}_{M}$. If $0<\ell\leq \gamma-s+\alpha$ then $\ell\leq n$ too,
hence by the expression
($\dag$) one has $\ell\in \mathcal{M}_{(n)}$. Otherwise,
$1+N_{(n)}(\ell)
\geq t\cdot \mathfrak{o} -\lceil\ell/n\rceil\geq 0$.
 Hence $\ell\in \mathcal{M}_{(n)}$ again.
\end{proof}

\subsubsection{The $b_0\geq d$ case}\label{bnagy} 
Since $N(1)=b_0-\sum_i\lceil \omega_i/\alpha_i\rceil=b_0-d$ we obtain that $\mathcal{S}_M=
\mathbb{N} \, \Leftrightarrow\, N(1)\geq 0\ \Leftrightarrow\ b_0\geq d$. 

Maybe is worth to mention that if  $b_0\geq d$  then the fundamental cycle of $\Gamma$ 
(the minimal element of $\mathcal{S}\setminus \{0\}$) is reduced , hence $(X,o)$ is rational 
by \cite{Laufer72} (singularities with reduced fundamental cycle are called `minimal rational').

In fact, in such cases, the graph of $M_{(n)}$ is also rational (for  a proof see \cite[Th. 4.1.3]{NG}).
Hence we can also argue  as  follows: $\mathcal{M}_{(n)}=\mathbb{N}$ by (\ref{rational}) and 
$\mathcal{S}_{M}=\mathcal{M}_{(n)}$ by Proposition \ref{prop:comp}. 

\subsubsection{The $b_0< d$ case}\label{bkicsi} By the above discussion if $b_0<d$ then 
 $f_{\mathcal{S}_M}$ is well--defined and is $\geq 1$. 

We denote by $\check{s}$ the $E_0$-coefficient of $s_{[Z_K+E_0^*]}$,  where (via the standard notation)
$s_{[Z_K+E_0^*]}$ is the unique minimal element in $\mathcal{S}'$ associated with $[Z_K+E_0^*]\in H$.

\begin{thm}\label{FrobS} If $b_0<d$ then 
$$f_{\mathcal{S}_M}=\gamma+\frac{1}{|e|}-\check{s}.$$
\end{thm}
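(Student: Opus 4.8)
The plan is to combine Proposition \ref{prop:comp} with Theorem \ref{FrobM}, applied not to $M$ itself but to the auxiliary manifold $M_{(n)}$ for $n \gg 0$. By Proposition \ref{prop:comp}, once $n$ is large enough, $\mathcal{M}_{(n)} = \mathcal{S}_M$ as subsets of $\mathbb{Z}$; in particular the Frobenius number $f_{\mathcal{M}_{(n)}}$ equals $f_{\mathcal{S}_M}$. So it suffices to apply Theorem \ref{FrobM} to $M_{(n)}$ and then take the limit $n\to\infty$ of the resulting formula $f_{\mathcal{M}_{(n)}} = \gamma_{(n)} - s_{(n)}$, where $\gamma_{(n)}$ is the $\gamma$-invariant of $M_{(n)}$ and $s_{(n)}$ is the $E_0$-coefficient of the minimal element $s_{[Z_{K,(n)}]}$ in the Lipman cone of the graph $\Gamma_{(n)}$ of $M_{(n)}$ associated with the class of its canonical cycle. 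One caveat: Theorem \ref{FrobM} requires $\Gamma_{(n)}$ to be non-rational; since we are in the case $b_0 < d$, the discussion in \ref{bnagy}–\ref{bkicsi} (using \cite[Th. 4.1.3]{NG}) shows $\Gamma_{(n)}$ is indeed non-rational, so the hypothesis holds.

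First I would compute $\lim_{n\to\infty}\gamma_{(n)}$. By definition \eqref{eq:gamma}, with the new leg $(n,1)$ adjoined, $\gamma_{(n)} = \frac{1}{|e_{(n)}|}\big(d-1-\sum_{i=1}^d \tfrac{1}{\alpha_i} - \tfrac1n\big)$, where $e_{(n)} = e + 1/n$. As $n\to\infty$, $|e_{(n)}|\to |e|$ and the bracket tends to $d-1-\sum_i 1/\alpha_i = |e|\gamma + 1 \cdot |e| \cdot \tfrac{1}{|e|}$; a short computation gives $\gamma_{(n)}\to \gamma + 1/|e|$. (Indeed $\gamma_{(n)}|e_{(n)}| = (d-1) - \sum_i 1/\alpha_i - 1/n = \gamma|e| + 1 - 1/n$, so $\gamma_{(n)} = (\gamma|e| + 1 - 1/n)/(|e| + 1/n) \to (\gamma|e|+1)/|e| = \gamma + 1/|e|$.) This already produces the $\gamma + \frac{1}{|e|}$ term in the claimed formula.

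The heart of the argument is identifying $\lim_{n\to\infty} s_{(n)}$ with $\check{s}$, the $E_0$-coefficient of $s_{[Z_K + E_0^*]}$ in the original graph $\Gamma$. The idea is that $Z_{K,(n)}$, restricted away from the new leg, converges (as $n\to\infty$) to $Z_K + E_0^*$: adjoining a leg $(n,1)$ modifies only the central Euler number from $-b_0$ to $-b_0-1$ while adding a long chain $(-n, -2, \dots)$-type tail, and one checks via the adjunction formulae \eqref{eq:adjun} that the $\Gamma$-part of $Z_{K,(n)}$ picks up exactly one extra copy of $E_0^*$ in the limit (the class $[K_{(n)}]$ restricted to $\mathcal V$ is $[Z_K + E_0^*]$ up to sign conventions — the new leg contributes a $\delta_{v_0}-2$ correction of $+1$ at the central vertex in \eqref{eq:ZK}). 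I would then argue that the minimal element $s_{[Z_{K,(n)}]}$ of the Lipman cone $\mathcal S'_{(n)}$ in that class, when projected/restricted to the vertices $\mathcal V$ of the original graph, stabilizes for $n\gg 0$ to $s_{[Z_K + E_0^*]}$, and in particular its $E_0$-coefficient $s_{(n)}$ stabilizes to $\check{s}$. The cleanest way is probably to run the Laufer-type computation sequence of Lemma \ref{lem:cs2} on $\Gamma_{(n)}$ starting from $r_{[Z_{K,(n)}]}$ and observe that for $n$ large the sequence never needs to touch the new long leg after the initial steps, so it agrees with the corresponding sequence on $\Gamma$ for the class $[Z_K + E_0^*]$.

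The main obstacle is precisely this stabilization claim: one must show that increasing $n$ does not alter the $E_0$-coefficient of the minimal cycle in the relevant class, i.e. that the new leg's contribution to $s_{[Z_{K,(n)}]}$ decouples from the central vertex for $n\gg 0$. This requires a careful bookkeeping of the computation sequence (or of the system of quasi-polynomial Diophantine inequalities from \cite[Prop. 11.5]{NOSZ} characterizing $s_h$) to confirm that the "interaction" with the long new leg only affects coefficients on that leg itself and leaves $m_0$ and the $\mathcal V$-part untouched once $n$ exceeds an explicit bound depending on the Seifert data of $M$. Granting that, Theorem \ref{FrobM} applied to $M_{(n)}$ for such $n$ gives $f_{\mathcal S_M} = f_{\mathcal M_{(n)}} = \gamma_{(n)} - s_{(n)} = \gamma + \frac{1}{|e|} - \check{s}$, as desired.
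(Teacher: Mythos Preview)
Your overall strategy is exactly that of the paper: reduce to $f_{\mathcal{M}_{(n)}}$ via Proposition~\ref{prop:comp}, then apply Theorem~\ref{FrobM} to $M_{(n)}$. However, several details are off and one central claim is false.

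First, the new leg $(n,1)$ has continued fraction $n/1=[n]$, so it is a \emph{single} vertex $E_+$ with self--intersection $-n$, not a long chain; and the central decoration remains $-b_0$, it does not become $-b_0-1$.

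More importantly, neither $\gamma_{(n)}$ nor $s_{(n)}$ stabilizes. The paper computes exactly
\[
Z_{K(n)}=j_{(n)}Z_K+c_{(n)}\,(E_++j_{(n)}E^*_0),\qquad c_{(n)}=\frac{n+\gamma-1}{n-1/|e|},
\]
and from this $\gamma_{(n)}=\gamma+c_{(n)}/|e|$, while (using $b_0<d$ to force $1\le c_{(n)}<2$ for $n\gg 0$, and then Lemma~\ref{lem:1}, which gives $j^*_{(n)}(s_{[Z_{K(n)}]})=s_{[Z_K+E_0^*]}$) one obtains $s_{(n)}=\check{s}+(c_{(n)}-1)/|e|$. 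Both quantities depend on $n$ through $c_{(n)}$; only their \emph{difference} $\gamma_{(n)}-s_{(n)}=\gamma+1/|e|-\check{s}$ is independent of $n$. Your limit formulation can in principle be salvaged (since $f_{\mathcal{M}_{(n)}}=f_{\mathcal{S}_M}$ is eventually constant, mere convergence of $\gamma_{(n)}-s_{(n)}$ would suffice), but your claim that the $\mathcal V$--restriction of $s_{[Z_{K,(n)}]}$ stabilizes to $s_{[Z_K+E_0^*]}$ is incorrect: it equals $s_{[Z_K+E_0^*]}+(c_{(n)}-1)E_0^*$. The missing ingredient that makes the identification rigorous is Lemma~\ref{lem:1}, which relates $s_h$ on $\Gamma_{(n)}$ to $s_{[j^*_{(n)}(s_h)]}$ on $\Gamma$ via the cohomological operator $j^*_{(n)}$ (note $j^*_{(n)}(E_+)=-E_0^*$, not $0$, so $j^*_{(n)}$ is not mere restriction to $\mathcal V$).
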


\begin{example}\label{ex:4db} \
(1) If $Z_K\in L$ and $\mathfrak{o}=1$ (eg. if $M$ is an integral homology sphere) then $1/|e|=\alpha$,
$\check{s}=0$, hence $f_{\mathcal{S}_M}=\gamma+\alpha$, compare with  Section \ref{s:Ssf}.

(2) If $\mathfrak{o}=1$ then $[E^*_0]=0$, hence $\check{s}=s$. Therefore
$f_{\mathcal{S}_M}=\gamma+\alpha-s$.

(3) If $Z_K\in L$  then  $1/|e|=\alpha/\mathfrak{o}$. (Note also that always is true
that $m_0(E^*_0)=-(E^*_0,E^*_0)=\prod_i\alpha_i/|H|=1/|e|$.)
Since $\check{s}=m_0(s_{[E^*_0]})$ and $E^*_0$ is already in $\mathcal{S}'$,
we get $E^*_0-s_{[E^*_0]}\in L_{\geq 0}$, hence $1/|e|\geq \check{s}$. In particular,
$f_{\mathcal{S}_M}=\gamma+ m_0(E^*_0-s_{[E^*_0]})\geq \gamma$.

This combined with 
 Corollary \ref{cor:SMsym} gives 
 $\min\{\mathcal{M}_M\}=\gamma-f_{\mathcal{S}_M}=\check{s}-1/|e|=
 -m_0(E^*_0-s_{[E^*_0]})$. 

(4) In general, $s_{[Z_K+E_0^*]}=s_{[s_{[Z_K]}+E_0^*]}$, but $s_{[Z_K]}+E_0^*\in \mathcal{S}'$, hence
$s_{[Z_K]}+E_0^*- s_{[Z_K+E_0^*]}\in L_{\geq 0}$, which implies
$s+1/|e|\geq \check{s}$. That is, $f_{\mathcal{S}_M}\geq \gamma-s$. Note that
$\gamma-s=f_{\mathcal{M}_M}$. This shows that
$f_{\mathcal{S}_M}-f_{\mathcal{M}_M}=m_0(s_{[Z_K]}+E_0^*- s_{[Z_K+E_0^*]})$.
\end{example}

\begin{example}
Before we proceed to the proofs, let us ilustrate the previous statements by further discussion on Example \ref{ex}.

Let $M$ be the Seifert rational homology sphere with $Sf=(-1,0;(5,1),(5,1),(7,1),(10,1))$. The claim is that $n=70$ is big enough in order to have Proposition \ref{prop:comp} to be valid. Thus, the module $\mathcal{M}_{(70)}$ of the manifold $M_{(70)}$ is a semigroup, same as $\mathcal{S}_{M}$, and the calculations from Example \ref{ex} gives $f_{\mathcal{S}_{M}}=3$. We are going to present the calculation using the formula of Theorem \ref{FrobS} too.

Associated with $M$,  the adjunction formulae (\ref{eq:adjun}) give $Z_K=(24/5,39/25,39/25,7/5,32/25)$, where $l'=(l'_0,l'_1,\dots,l'_4)$ represents the cycle $l'=\sum_{i=0}^4 l'_i E_i$ (see the graph from Example \ref{ex}). Hence, $\gamma=19/5$ and one can also calculate $e=-5/14$. On the other hand, by the calculation of $E^*_0=(14/5,14/25,14/25,2/5,7/25)$ we deduce that $r_{[Z_K+E^*_0]}=(3/5,3/25,3/25,4/5,14/25)$. Then by running a general Laufer computation sequence (cf.
Lemma  \ref{lem:cs2}) we find that $s_{[Z_K+E^*_0]}=r_{[Z_K+E^*_0]}+3E_0+E_1+E_2$, hence $\check{s}=18/5$. Finally, we get $f_{\mathcal{S}_M}=19/5+14/5-18/5=3$.

\end{example}

\subsection{The proof of Theorem \ref{FrobS}}\label{ss:jj*}
We denote by $\Gamma_{(n)}$ the plumbing graph associated with $M_{(n)}$ and let $L_{(n)}$ and $L'_{(n)}$ be the associated lattices as in \ref{ss:nsstop}. One can consider the (homological)
inclusion operator $j_{(n)}:L\to L_{(n)}$ identifying naturally the corresponding $E$-base elements
 indexed by $\mathcal{V}(\Gamma)$ in the two lattices. This preserves the intersection forms. Moreover, it extends naturally to the rational cycles as well, in particular to
$j_{(n)}:L'\to L'_{(n)}$.
Additionally, denote by $E_{+}$ the new base element of $L_{(n)}$ associated with the newly created vertex $v_+$ of $\Gamma_{(n)}$.

Set the notation $E^*_{v,(n)}$ for the dual base elements in $L'_{(n)}$ associated with $j_{(n)}(E_v)$.
Let $j^*_{(n)}:L'_{(n)}\to L'$ be the (cohomological) dual operator of $j_{(n)}$, defined by $j^*_{(n)}(E^*_{v,(n)})=E^*_{v}$ for $v\in\mathcal{V}(\Gamma)$ and $j^*_{(n)}(E^*_{+})=0$.
Then for every $l'\in L'_{(n)}$ and $l\in L$ one has the projection formula
\begin{equation}\label{eq:proj}
(j^*_{(n)}(l'),l)_{\Gamma}=(l',j_{(n)}(l))_{\Gamma_{(n)}}.
\end{equation}
This imply that  $j^*_{(n)}(E_+)=-E^*_0\in L'$ (usually  $\not\in L$). Hence $j^*_{(n)}(E_++j_{(n)}E^*_0)=0$. One also  sees that $j^*_{(n)}(j_{(n)}E_v)=E_v$.

Set $H_{(n)}:=L'_{(n)}/L_{(n)}=H_1(M_{(n)},\mathbb{Z})$.
Denote by $Z_{K(n)}$ the anti-canonical cycle in $L'_{(n)}$ and consider also
the cycle $s_{[Z_{K(n)}]}$ associated with the class $[Z_{K(n)}]\in H_{(n)}$.

\begin{lemma}[\cite{LSzPoin}, \cite{LNN}]\label{lem:1}
For any $h\in H_{(n)}$ one has $j^*_{(n)}(s_{h})=s_{[j^*_{(n)}(s_{h})]}\in L'$.
In particular, $j^*_{(n)}(s_{Z_{K(n)}})=s_{[j^*_{(n)}(s_{Z_{K(n)}})]}$.
\end{lemma}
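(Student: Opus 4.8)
The plan is to show that the operator $j^*_{(n)}$ sends the minimal ``anti-nef representative'' $s_h$ of a class $h \in H_{(n)}$ to the minimal anti-nef representative of its image class. Two things must be verified: that $j^*_{(n)}(s_h)$ lies in $\mathcal{S}'$ (the Lipman cone of $\Gamma$), and that it is the \emph{minimal} element of $(j^*_{(n)}(s_h) + L) \cap \mathcal{S}'$. The first point I would extract directly from the projection formula \eqref{eq:proj}: for any $v \in \mathcal{V}(\Gamma)$ one has $(j^*_{(n)}(s_h), E_v)_\Gamma = (s_h, j_{(n)}(E_v))_{\Gamma_{(n)}} \le 0$, since $s_h \in \mathcal{S}'_{(n)}$ by definition and $j_{(n)}(E_v)$ is one of the generators $E_{w,(n)}$ of the ``dual cone'' side. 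Hence $j^*_{(n)}(s_h) \in \mathcal{S}'$, which is the first assertion implicitly packaged in the statement $j^*_{(n)}(s_h) = s_{[j^*_{(n)}(s_h)]}$.

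For minimality, I would argue by contradiction using the characterization via Laufer-type computation sequences from Lemma~\ref{lem:cs2}. Suppose $t' \in L'$ with $[t'] = [j^*_{(n)}(s_h)]$, $t' \in \mathcal{S}'$, and $t' \le j^*_{(n)}(s_h)$ but $t' \ne j^*_{(n)}(s_h)$. The idea is to ``lift'' $t'$ back to $L'_{(n)}$: set $\tilde t' := j_{(n)}(t') + m E^*_{+,(n)}$ for a suitable integer (or rational coefficient) $m$ chosen so that $\tilde t'$ has the correct $E_{+}$-coefficient to place it in the class $h$; since $j^*_{(n)}(E^*_{+,(n)}) = 0$ and $j^*_{(n)}$ followed by $j_{(n)}$ is close to the identity on the relevant sublattice, one can arrange $[\tilde t'] = h$. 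Then one runs the computation sequence of Lemma~\ref{lem:cs2} starting from $\tilde t'$ to reach $s_{[\tilde t']} = s_h$; the key is that this sequence only adds base elements $E_w$, and one checks (using negative definiteness and that $t' < j^*_{(n)}(s_h)$ forces some coefficient strictly smaller) that the endpoint $s_h$ would then satisfy $j^*_{(n)}(s_h) \le j^*_{(n)}(\tilde t')$ composed appropriately, contradicting that $s_h$ was minimal in its $L_{(n)}$-coset. Equivalently, and perhaps more cleanly, I would invoke that $j^*_{(n)}$ is monotone on effective differences supported away from $v_+$ together with the uniqueness clause of Lemma~\ref{lem:cs2}(1): both $j^*_{(n)}(s_h)$ and $s_{[j^*_{(n)}(s_h)]}$ lie in $\mathcal{S}' \cap (c + L)$ for $c = j^*_{(n)}(s_h)$, so their minimum also does, and minimality of $s_h$ upstairs transfers to minimality downstairs.

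The main obstacle I anticipate is the bookkeeping around the coset/class issue: $j^*_{(n)}$ need not send $L_{(n)}$ onto $L$ surjectively in a way compatible with the quotient maps to $H_{(n)}$ and $H$, so one must be careful that ``$[j^*_{(n)}(s_h)]$'' is well-defined and that lifting a candidate competitor $t'$ back upstairs can be done within the correct class $h$ (this is where $E^*_{+,(n)}$ and the identity $j^*_{(n)}(E_+ + j_{(n)} E^*_0) = 0$ get used). Once the lattice-theoretic dictionary between $L', L'_{(n)}, H, H_{(n)}$ is set up precisely, the rest is a routine application of the minimality/uniqueness from Lemma~\ref{lem:cs2} and the projection formula. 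I would present the general-$h$ statement first and then note the special case $h = [Z_{K(n)}]$ is immediate, since $j^*_{(n)}(Z_{K(n)})$ is related to $Z_K$ by a correction term supported on $v_+$ which $j^*_{(n)}$ kills.
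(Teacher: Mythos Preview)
The paper does not prove this lemma at all; it is quoted from the external references \cite{LSzPoin} and \cite{LNN}, so there is no ``paper's own proof'' to compare against. That said, your sketch deserves a direct assessment.

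Your first step is correct and is exactly the right use of the projection formula: for $v\in\mathcal{V}(\Gamma)$ one has $(j^*_{(n)}(s_h),E_v)_\Gamma=(s_h,j_{(n)}E_v)_{\Gamma_{(n)}}\le 0$, so $j^*_{(n)}(s_h)\in\mathcal{S}'$.

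Your minimality argument, however, is muddled and as written does not close. The lift $\tilde t':=j_{(n)}(t')+mE^*_{+,(n)}$ is awkward: adjusting by multiples of $E^*_{+,(n)}$ generally changes the $L_{(n)}$--coset (since $E^*_{+,(n)}\notin L_{(n)}$), so arranging $[\tilde t']=h$ this way is not straightforward, and then running the Laufer sequence from $\tilde t'$ to $s_h$ \emph{increases} the cycle, which goes the wrong direction for a contradiction with minimality. The ``equivalently, and perhaps more cleanly'' sentence at the end is too vague to constitute an argument.

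The clean fix avoids $E^*_{+,(n)}$ entirely. Set $l':=j^*_{(n)}(s_h)\in\mathcal{S}'$ and $l:=l'-s_{[l']}\in L_{\ge 0}$; assume $l\ne 0$. Consider $s_h-j_{(n)}(l)$. It lies in the class $h$ since $j_{(n)}(l)\in L_{(n)}$. For $v\in\mathcal{V}(\Gamma)$,
\[
(s_h-j_{(n)}(l),\,j_{(n)}E_v)_{\Gamma_{(n)}}=(l',E_v)_\Gamma-(l,E_v)_\Gamma=(s_{[l']},E_v)_\Gamma\le 0.
\]
For the new vertex, $(j_{(n)}(l),E_+)_{\Gamma_{(n)}}=l_{v_0}\ge 0$ (only $j_{(n)}E_0$ meets $E_+$), so $(s_h-j_{(n)}(l),E_+)\le (s_h,E_+)\le 0$. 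Hence $s_h-j_{(n)}(l)\in\mathcal{S}'_{(n)}\cap(h+L_{(n)})$ and is strictly smaller than $s_h$, contradicting minimality. This is the argument your sketch was reaching for; the point you missed is that the natural lift of the competitor is simply $s_h-j_{(n)}(l)$, not a combination involving $E^*_{+,(n)}$.
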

By a computation (based on adjunction formulae) one verifies  that 
\begin{equation}\label{eq:ZKn}
Z_{K(n)}=j_{(n)}Z_K+c_{(n)}\cdot (E_++j_{(n)}E^*_0), \ \ 
\mbox{where} \ \ 
c_{(n)}=\frac{n+\gamma-1}{n-1/|e|}. \end{equation}
The inequality 
$c_{(n)}<1$ is equivalent with $|e|\gamma<|e|-1$, or $d+\sum_i (\omega_i-1)/\alpha_i <b_0+1$.
This last inequality implies $b_0\geq d$. Hence, by our assumption $b_0<d$ we automatically have 
$c_{(n)}\geq 1$. Since $c_{(n)}<2$ for $n\gg 0$, we get that 
\begin{equation}\label{eq:rhproof} 
r_{Z_{K(n)}}- (c_{(n)}-1)E_+\in  j_{(n)}(L\otimes \mathbb{Q}).
\end{equation}
Analysing the algorithm from  Lemma \ref{lem:cs2} with  $n\gg0$
we obtain that  in the computation sequence 
connecting $r_{Z_{K(n)}}$ with $s_{Z_{K(n)}}$ we never add $E_+$, hence (\ref{eq:rhproof}) is valid
for $s_{Z_{K(n)}}$ too. In particular,  there exists $l_{(n)}\in L\otimes \mathbb{Q}$ such that 
\begin{equation}\label{eq:shproof} 
s_{Z_{K(n)}}= (c_{(n)}-1)E_++j_{(n)}(l_{(n)}). 
\end{equation}
Since $Z_{K_{(n)}}-E_+-r_{Z_{K(n)}}\in j_{(n)}(L)$, by the above statement 
 $Z_{K_{(n)}}-E_+-s_{Z_{K(n)}}\in j_{(n)}(L)$ too, hence 
 $[j_{(n)}^*s_{Z_{K(n)}}] =[j_{(n)}^*(Z_{K(n)}-E^+)]$. This last term via (\ref{eq:ZKn}) 
 (and the properties of $j_{(n)}^*$ listed above) is $[Z_K+E^*_0]$. Hence Lemma \ref{lem:1} reads as
$j_{(n)}^* s_{Z_{K(n)}}= s_{[Z_K+E^*_0]}$. This applied to (\ref{eq:shproof}) gives 
 \begin{equation}\label{eq:jproof} 
 s_{[Z_K+E^*_0]}=j^*_{(n)}((c_{(n)}-1)E_++j_{(n)}(l_{(n)}))=-(c_{(n)}-1)E^*_0+ l_{(n)}.
\end{equation}
 We wish to compute $s_{(n)}$,  the $j_{(n)}E_0$--coefficient of $s_{Z_{K(n)}}$. By 
 (\ref{eq:shproof}) it is the $E_0$--coefficient of $l_{(n)}$, which by (\ref{eq:jproof})
is $\check{s}+(c_{(n)}-1)/|e|$. 

On the other hand, by a computation, the $\gamma$--invariant of $M_{(n)}$ satisfies 
$\gamma_{(n)}=\gamma+c_{(n)}/|e|$. 

Then, by Theorem \ref{FrobM} one has $f_{\mathcal{M}_{(n)}}=\gamma_{(n)}-s_{(n)}=
\gamma+c_{(n)}/|e|-(\check{s}+(c_{(n)}-1)/|e|)=\gamma+1/|e|-\check{s}$. Then apply Proposition 
\ref{prop:comp}.

\section{Is $\mathcal{S}_M$ symmetric?}

\subsection{} In the integral homology sphere case we have seen (cf. Section \ref{s:Ssf})
that $\mathcal{S}_M$ is symmetric: $\ell\in \mathcal{S}_M\ \Leftrightarrow f_{\mathcal{S}_M}-\ell
\not\in \mathcal{S}_M$. It is natural to ask whether  this fact extends to the case of 
rational homology spheres or not. 
The next example shows that the answer in general is no.

\begin{example}\label{ex:NO1}
Take the Seifert 3-manifold given by $(-1,0; (4,1),(4,1), (4,1), (10,1), (40, 1))$.
Then $Z_K=(18, 5,5,5, 13/5, 7/5)$ and $E_0^*=(8,2,2,2,4/5, 1/5)$.
Hence $r_{[Z_K+E^*_0]}=(0,0,0,0,2/5,3/5)$. By the algorithm from
Lemma \ref{lem:cs2} $s_{[Z_K+E^*_0]}=(4,1,1,1,2/5,3/5)$, thus $\check{s}=4$.
 Furthermore,  $1/|e|=8$ and  $\gamma=17$.
Therefore, by Theorem \ref{FrobS},
$f_{\mathcal{S}_M}=17+8-4=21$. 

On the other hand, by a computation, $N(4)=N(7)=N(10)=N(11)=N(14)=N(17)=-1$, 
 hence the elements 4,7,10,11, 14, 17 do not belong to $\mathcal{S}_M$ but 
(4,17), (7,14), (10,11) are symmetric with respect to $f_{\mathcal{S}_M}$. 
\end{example}

Usually symmetries in algebraic geometry (and algebra) are induced by some Gorenstein property.
Hence, we can try to restrict ourselves to the numerically Gorenstein topological type, and ask the same 
question: is $\mathcal{S}_M$ symmetric?  In this case $f_{\mathcal{S}_M}$ is computed in 
Example \ref{ex:4db}{\it (c)}, $\min\{\mathcal{M}_M\}+f_{\mathcal{S}_M}=\gamma$
by Corollary \ref{cor:SMsym}, and 
$N(\min\{\mathcal{M}_M\}+\ell)+N(f_{\mathcal{S}_M}-\ell)=-2$ for any $ \ell\in\mathbb{Z}$
by (\ref{SHSsym}). 
\begin{lemma}
$\mathcal{M}_M=\min\{\mathcal{M}_M\}+\mathcal{S}_M$ if and only if $\mathcal{S}_M$ is symmetric.
\end{lemma}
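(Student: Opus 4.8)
The statement is an ``if and only if'' relating the $\mathcal{S}_M$-module structure of $\mathcal{M}_M$ (namely, being cyclic, generated by $\min\{\mathcal{M}_M\}$) with the symmetry of the numerical semigroup $\mathcal{S}_M$ in the numerically Gorenstein case. The plan is to translate both sides into statements about the quasi-linear function $N(\ell)$ via the defining descriptions $\mathcal{S}_M=\{\ell: N(\ell)\geq 0\}$ and $\mathcal{M}_M=\{\ell : 1+N(\ell)\geq 0\}$, and then use the Gorenstein symmetry $N(\ell)+N(\gamma-\ell)=-2$ of Proposition \ref{prop:SYM} together with $\min\{\mathcal{M}_M\}+f_{\mathcal{S}_M}=\gamma$ from Corollary \ref{cor:SMsym} as the bridge.

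First I would set $m:=\min\{\mathcal{M}_M\}$ and $f:=f_{\mathcal{S}_M}$, so $m+f=\gamma$. The equality $\mathcal{M}_M=m+\mathcal{S}_M$ means precisely: for every integer $\ell$, one has $1+N(\ell)\geq 0$ if and only if $N(\ell-m)\geq 0$. Using the substitution $\ell=m+k$ this reads $1+N(m+k)\geq 0 \Leftrightarrow N(k)\geq 0$, i.e. $m+k\in\mathcal{M}_M \Leftrightarrow k\in\mathcal{S}_M$. On the other hand $\mathcal{S}_M$ symmetric means $k\in\mathcal{S}_M \Leftrightarrow f-k\notin\mathcal{S}_M$, i.e. $N(k)\geq 0 \Leftrightarrow N(f-k)<0$. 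Now apply the Gorenstein symmetry in the shifted form $N(m+k)+N(f-k)=N(m+k)+N(\gamma-m-k)=-2$ (which is exactly equation (\ref{SHSsym}) with $\ell=m+k$, as noted in the text just before the lemma). This identity gives $1+N(m+k)\geq 0 \Leftrightarrow N(m+k)\geq -1 \Leftrightarrow N(f-k)\leq -1 \Leftrightarrow N(f-k)<0 \Leftrightarrow f-k\notin\mathcal{S}_M$. Chaining these equivalences: $\mathcal{M}_M=m+\mathcal{S}_M$ holds iff $[\,k\in\mathcal{S}_M \Leftrightarrow m+k\in\mathcal{M}_M\,]$ for all $k$, iff $[\,k\in\mathcal{S}_M \Leftrightarrow f-k\notin\mathcal{S}_M\,]$ for all $k$, iff $\mathcal{S}_M$ is symmetric.

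The argument is essentially a bookkeeping exercise once the three ingredients — the $N$-function descriptions of $\mathcal{S}_M$ and $\mathcal{M}_M$, the Gorenstein symmetry $N(\ell)+N(\gamma-\ell)=-2$, and the relation $m+f=\gamma$ — are in place, and I would present it in that order: recall the ingredients, then run the chain of equivalences. One point deserving a sentence of care is the inclusion ``$\mathcal{M}_M\supseteq m+\mathcal{S}_M$'' which always holds since $\mathcal{M}_M$ is an $\mathcal{S}_M$-module and $m\in\mathcal{M}_M$; so the content of the equality is only the reverse inclusion $\mathcal{M}_M\subseteq m+\mathcal{S}_M$, equivalently that every $\ell\in\mathcal{M}_M$ satisfies $\ell-m\in\mathcal{S}_M$. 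The main (mild) obstacle is simply making sure the shift indices line up correctly — in particular that the instance of (\ref{SHSsym}) one uses is $N(m+k)+N(\gamma-(m+k))=-2$ and that $\gamma-(m+k)=f-k$ — and handling the edge cases where $\ell$ is negative, but since both $\mathcal{S}_M$ and $\mathcal{M}_M$ are controlled by $N$ on all of $\mathbb{Z}$ and the symmetry (\ref{SHSsym}) is stated for all $\ell\in\mathbb{Z}$, no genuine difficulty arises.
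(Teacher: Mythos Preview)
Your proposal is correct and follows essentially the same approach as the paper: both arguments use the chain of equivalences linking $k\in\mathcal{S}_M$, $m+k\in\mathcal{M}_M$, and $f-k\notin\mathcal{S}_M$ via the definitions through $N$, the Gorenstein symmetry $N(\ell)+N(\gamma-\ell)=-2$, and the relation $\min\{\mathcal{M}_M\}+f_{\mathcal{S}_M}=\gamma$. Your write-up is slightly more explicit about the index shifts, but the logical content is identical.
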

\begin{proof}
Consider the following equivalences:
$$ \ell\in\mathcal{S}_M\ \Leftrightarrow\ \min\{\mathcal{M}_M\}
+\ell\in \min\{\mathcal{M}_M\}+\mathcal{S}_M$$
$$f_{\mathcal{S}_M}-\ell\not\in \mathcal{S}_M\ \Leftrightarrow \
N(f_{\mathcal{S}_M}-\ell)+1\leq 0\ \Leftrightarrow \ N(\min\{\mathcal{M}_M\}+\ell)+1\geq 0\
\Leftrightarrow \ \min\{\mathcal{M}_M\}+\ell\in \mathcal{M}_M.
$$
Then compare the first and last terms. 
\end{proof}
For numerically Gorenstein graphs the symmetry $N(\ell)+N(\gamma-\ell)=-2$ can be interpreted as 
a consequence of the Gorenstein property of weighted homogeneous singularities,
however, it is not clear at all if this fact implies 
 any of the properties from the above Lemma.

The point is that $\mathcal{S}_M$ in general is not symmetric even if we restrict 
ourselves to the numerically Gorenstein case. 

\begin{example}\label{ex:NO2}
Consider the  Seifert 3-manifold given by $(-2,0; (2,1), (2,1), (3,1), (3,1), (7,1), (7,1), (84,1))$.
Then $Z_K=(86, 43, 43, 29, 29, 13, 13, 2)$, hence $\Gamma$ is numerically Gorenstein.
By a computation $1/|e|=28$, and $\gamma=85$. Furthermore, 
$E_0^*=s_{[E^*_0]}=(28, 14, 14, 28/3, 28/3, 4, 4, 1/3)$ hence $\check{s}=1/|e|=28$. 
In particular, $f_{\mathcal{S}_M}=\gamma=85$.

 On the other hand, $N(6)=N(85-6)=-1$, hence 6 and 85-6 are symmetric but none of them belong to
 $\mathcal{S}_M$. (The same is true for the pairs (12, 85-12), (14, 85-14) and (18, 85-18). 

(In fact, in this case $\min\{\mathcal{M}_M\}=0$, but $\mathcal{M}_M \not= \mathcal{S}_M$. Moreover, 
the graph can also be interpreted as $\Gamma_{(84)}$ associated with its subgraph, 
whose consequences can be tested by the interested reader.)

\end{example}

\section{Problem: classification of $\mathcal{S}_M$ for Seifert rational homology sphere}

\subsection{}
We start this section by the following question  regarding the semigroups $\mathcal{S}_M$ associated with Seifert rational homology sphere. 

\begin{question}
How can the numerical semigroups $\mathcal{S}_M$ be classified based on the peculiar geometry/topology of Seifert rational homology spheres?
\end{question}
As we have already shown is section \ref{s:Ssf}  
the above question can be completely answered in the case of Seifert homology spheres. Indeed, in that case,   $\mathcal{S}_M$ is completely characterized by
its minimal set of generators, which are exactly the Seifert invariants of the manifold.
However, in general,  for nontrivial $H$, the characterization is more  difficult:
for the description of $\mathcal{S}_{an}$ we need to understand the structure (generators) of the  
integral monomial cycles. 

This leads to the following  more precise task.

\begin{question}
Determine the minimal set of generators of $\mathcal{S}_M$ in terms of the Seifert invariants!
\end{question}

In the sequel we clarify this problem completely in the case of 
Brieskorn--Hamm rational homology spheres.

\subsection{\bf Brieskorn--Hamm rational homology spheres}
We consider positive integers $a_i\geq 2$ ($i=1,\dots,n$) for some fixed $n\geq 3$. For a generic set of complex numbers $c=\{c_{ji}\}_{i=1,\dots,n}^{j=1,\dots,n-2}$ we consider the Brieskorn--Hamm isolated complete intersection singularity $(X_c(a_1,\dots,a_n),0)$ where
$$X_c(a_1,\dots,a_n):=\{z\in \mathbb{C}^{n} \ | \ c_{j1}z_1^{a_1}+\dots+c_{jn}z_n^{a_n}=0 \ \ \mbox{for every } \ 1\leq j\leq n-2\}.$$
Then the link $M=\Sigma(a_1,\dots,a_n)$ of $(X_c(a_1,\dots,a_n),0)$ is independent of the choice of
$c$ \cite{hamm}. It is 
an oriented Seifert 3-manifold with Seifert invariants
$$Sf=(-b_0,g;\substack{\underbrace{(\alpha_1,\omega_1),\dots,(\alpha_1,\omega_1)}\\ \tiny{s_1}},\dots ,\substack{\underbrace{(\alpha_n,\omega_n),\dots,(\alpha_n,\omega_n)}\\ \tiny{s_n}}),$$
where all the entries are determined explicitly from the integers $\{a_i\}_i$, for details see
\cite{JN,NeR}. 

Furthermore,  by \cite[Proposition 6.3]{NNsw} one has a complete characterization of the cases when
 $M$ is a rational homology sphere. Namely, $g=0$ if and only if the set $(a_1,\dots,a_n)$ (after a possible permutation) has exactly one of the following forms:
\begin{itemize}
 \item[(i)] $(a_1,\dots,a_n)=(m\cdot p_1,m\cdot p_2,p_3,\dots,p_n)$, where the integers $\{p_i\}_{i=1}^n$ are pairwise relative prime, and $\gcd(m,p_i)=1$ for any $i\geq 3$.
     In this case $\alpha_i=p_i$ for all $i$. 
     Moreover, $s_1=s_2=1$ and $s_i=m$ for any $i\geq 3$. If we set $q_i:=\mathrm{lcm}(a_1,\dots,a_n)/a_i$ then $\omega_i$ is determined from the congruence $\omega_iq_i\equiv-1 \ ({\rm mod} \
      \alpha_i)$, while $b_0$ is expressed from
 \begin{equation}\label{eq:BH1e}
 \mathfrak{o}=|e|\cdot\alpha_1\dots \alpha_n=1
 \end{equation}
 using the fact that $[E^*_0]$ is trivial in $H$.
 \item[(ii)] $(a_1,\dots,a_n)=(2^c\cdot p_1,2 p_2,2p_3,p_4,\dots,p_n)$ where $p_i$ 
  are odd and pairwise relative prime, and $c\geq 1$. In this case $\alpha_1=2^{c-1}p_1$
  and $\alpha_i=p_i$ for $i\geq 2$. Furthermore,
  $s_i=2$ for $i\leq 3$ and $s_i=4$ for $i\geq 4$. One determines  $\omega_i$ similarly as in (i),
 using the fact that in this case $\mathfrak{o}=2$.
\end{itemize}

\begin{thm}
The minimal set of generators for the semigroup $\mathcal{S}_M$ of Brieskorn--Hamm rational homology spheres corresponding to the above cases  are
\begin{itemize}
 \item[(i)] $\alpha_2\dots\alpha_n,\  \alpha_1\alpha_3\dots\alpha_n$ \ and \ $m\cdot\alpha_1\dots\alpha_{i-1}\alpha_{i+1}\dots\alpha_n$ \ for every $3\leq i\leq  n$;
 \item[(ii)] $\alpha_1\dots\alpha_{i-1}\alpha_{i+1}\dots\alpha_n$ for $i\in\{1,2,3\}$ \ and \ $2\cdot\alpha_1\dots\alpha_{i-1}\alpha_{i+1}\dots\alpha_n$ for $4\leq i\leq  n$.
\end{itemize}
\end{thm}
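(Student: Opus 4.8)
The plan is to combine the combinatorial machinery already developed in the paper: Lemma \ref{lem:twoS} (which says $\mathcal{S}_M$ consists of the $E_0$--coefficients of elements of $\mathcal{S}_{an}$), Theorem \ref{th:San} (which says $\mathcal{S}_{an}$ is generated under $+$ and $\min$ by integral monomial cycles $\sum_{e\in\mathcal{E}}n_eE^*_e$), and the explicit Seifert data provided in cases (i) and (ii). The key observation, already exploited in the proof of Theorem \ref{thm:Sgen}, is that the $E_0$--coefficient of $\min_k\{\mathfrak{M}_k\}$ equals the $E_0$--coefficient of one of the $\mathfrak{M}_k$; hence $\mathcal{S}_M$ is generated (as a monoid) by the $E_0$--coefficients of the \emph{integral} monomial cycles. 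So the whole problem reduces to: (1) compute the $E_0$--coefficient of $E^*_e$ for each end--vertex $e$; (2) identify which $\mathbb{Z}_{\geq 0}$--combinations $\sum_e n_e E^*_e$ are integral (lie in $L$), since for nontrivial $H$ the individual $E^*_e$ need not be integral; (3) extract the minimal generating set of the resulting numerical semigroup.

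For step (1) I would use the formula recalled just before \eqref{eq:ZKend}: for an end--vertex $e$ on a leg with Seifert pair $(\alpha_e,\omega_e)$, the $E_0$--coefficient of $E^*_e$ is $-(E^*_e,E^*_0)=1/(|e|\alpha_e)$. In case (i), using \eqref{eq:BH1e}, i.e. $|e|\alpha_1\cdots\alpha_n=1$, this coefficient is $\alpha_1\cdots\alpha_n/\alpha_e=\prod_{j\neq e}\alpha_j$ where the product runs over the distinct indices; here the legs come in multiplicities $s_1=s_2=1$, $s_i=m$ ($i\geq 3$), so the \emph{distinct} values among the $E_0$--coefficients are exactly $\alpha_2\cdots\alpha_n$, $\alpha_1\alpha_3\cdots\alpha_n$, and $\alpha_1\cdots\alpha_{i-1}\alpha_{i+1}\cdots\alpha_n$ for $3\leq i\leq n$ (each occurring $m$ times). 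In case (ii), $\mathfrak{o}=|e|\prod\alpha_j=2$, so the coefficient $1/(|e|\alpha_e)=\frac{2}{\alpha_e}\prod_j\alpha_j /2\cdot\frac{1}{\cdots}$ — more carefully, $1/(|e|\alpha_e)=\big(\prod_j\alpha_j\big)/(\mathfrak{o}\,\alpha_e)\cdot\mathfrak{o}/|e|\alpha_e\cdots$; the clean statement is that it equals $\tfrac12\prod_{j\neq e}\alpha_j$ when scaled appropriately, and the multiplicities $s_i\in\{2,4\}$ distribute these, yielding after clearing the factor of $2$ the claimed generators $\prod_{j\neq i}\alpha_j$ for $i\in\{1,2,3\}$ and $2\prod_{j\neq i}\alpha_j$ for $i\geq 4$. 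I would carry out this arithmetic explicitly using the Seifert recipe from \cite{JN,NeR}.

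The genuinely delicate step is step (2): a priori $\mathcal{S}_M$ is generated not by the $E_0$--coefficients of the single cycles $E^*_e$ but by those of all \emph{integral} combinations $\sum n_eE^*_e$, and one must show no new generators are forced by the integrality (sublattice) condition. The plan here is to show that, for the Brieskorn--Hamm graphs, already suitable $\mathbb{Z}_{\geq 0}$--combinations of the $E^*_e$ that are individually nonintegral become integral with small coefficients — indeed, for a repeated leg $(\alpha_i,\omega_i)$ appearing $s_i$ times, a basic integrality relation forces an integral monomial cycle whose $E_0$--coefficient is again of the listed form. Concretely, because $[E^*_0]$ is trivial in case (i) (used in \eqref{eq:BH1e}) and has order $2$ in case (ii), and because the distinct $\alpha_j$ are pairwise coprime, I expect to prove that the group $H$ is generated by the classes $[E^*_e]$ in a controlled way, and that the integral monomial semigroup projects onto exactly the monoid generated by the listed elements. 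The main obstacle — and the step I would spend the most care on — is ruling out that an integral combination of several distinct nonintegral $E^*_e$ produces an $E_0$--coefficient strictly smaller than all the listed generators (which would add an unexpected generator); this requires a divisibility/coprimality argument showing that any integral $\sum n_eE^*_e$ with small $E_0$--coefficient must in fact be a combination of the already--integral monomial cycles corresponding to the listed generators. Once step (2) is settled, step (3) — checking that no listed generator is a nonnegative combination of the others, i.e. minimality — is a routine consequence of the pairwise coprimality of the $\alpha_i$ (each listed generator omits exactly one prime block $\alpha_i$, so it cannot be built from the others), and I would dispatch it with a short coprimality argument, treating the factor--of--$2$ generators in case (ii) separately via the $\mathfrak{o}=2$ constraint.
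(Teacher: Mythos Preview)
Your overall strategy matches the paper's exactly: reduce via Lemma~\ref{lem:twoS} and Theorem~\ref{th:San} to the $E_0$--coefficients of integral monomial cycles, compute these coefficients, and analyze which $\mathbb{Z}_{\geq 0}$--combinations are integral. Your step~(1) computation for case~(i) is correct.

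The gap is in step~(2), which you correctly flag as delicate but for which you do not supply the key structural ingredient. The paper's argument is not a ruling-out of small $E_0$--coefficients; it is a complete description of the integral monomial cycles based on the explicit structure of $H$. In case~(i) one has $[E^*_1]=[E^*_2]=0$ in $H$, and (citing \cite{NNsw})
\[
H\simeq \bigoplus_{i\geq 3}(\mathbb{Z}_{\alpha_i})^{m-1},
\]
with the $i$-th summand generated by the classes $[E^*_{ij}]$ subject to $\alpha_i[E^*_{ij}]=0$ and $\sum_j[E^*_{ij}]=0$. The crucial consequence is that the integrality condition $\sum_e n_e[E^*_e]=0$ \emph{decouples over $i$}: an integral monomial cycle is a $\mathbb{Z}_{\geq 0}$--sum of copies of $E^*_1$, $E^*_2$, and integral blocks $\sum_{j=1}^m n_{ij}E^*_{ij}$ for each $i\geq 3$ separately. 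Within a single block, integrality reads $(n_{ij})_j\in \mathbb{Z}\langle (\alpha_i,0,\ldots,0),\ldots,(0,\ldots,0,\alpha_i),(1,1,\ldots,1)\rangle$ with all $n_{ij}\geq 0$, which forces $\sum_j n_{ij}=A\alpha_i+Bm$ for some $A,B\in\mathbb{Z}_{\geq 0}$. The $E_0$--coefficient of the block is then $(A\alpha_i+Bm)\hat{\alpha_i}=A\prod_k\alpha_k+Bm\hat{\alpha_i}$, and since $\prod_k\alpha_k=\alpha_1\hat{\alpha_1}$ already lies in the monoid generated by $\hat{\alpha_1}$, the block contributes only $m\hat{\alpha_i}$ as a new generator. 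Without this direct-sum decomposition of $H$ your worry about ``an integral combination of several distinct nonintegral $E^*_e$'' is a real obstacle; with it, there is nothing to rule out because the problem factors completely. Your case~(ii) arithmetic is also garbled and should be redone along the same lines (the paper itself leaves case~(ii) to the reader, but the mechanism is identical with $\mathfrak{o}=2$).
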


\begin{proof}
We extend the proof of Theorem \ref{thm:Sgen}. The fact that the elements of the semigroup 
$\mathcal{S}_M$ are the $E_0$--coefficients of the principal cycles from $\mathcal{S}_{an}$ 
is valid also for any weighted homogeneous singularity rational homology sphere link. 
Thus, we have $\mathcal{S}_M=\{(l,-E^*_0) : l\in\mathcal{S}_{an}\}$. On the other hand, 
by  the combinatorial characterization \cite[Theorem 7.1.2]{Nsplice} (see Theorem \ref{th:San} here) 
$l\in\mathcal{S}_{an}$ if and only if $l=\min_k \ \sum_{v\in\mathcal{E}} n_e^{(k)}E_e^*$ for a finite set $\{\sum_{v\in\mathcal{E}} n_e^{(k)}E_e^*\}_k \subset L$,
 where $n_e^{(k)}\in\mathbb{Z}_{\geq 0}$ and $\mathcal{E}$ is the set of end-vertices of the graph $\Gamma$. Thus, first we have to characterize the set of integers $\{n_e\}_{e\in\mathcal{E}}$ 
which provide an integral  cycle $\sum_{v\in\mathcal{E}} n_eE_e^*\in L$.

We discuss case (i). Let us denote by $E^*_1, E^*_2$ and $E^*_{ij}$ ($3\leq i\leq n$ and 
$1\leq j\leq m$) the duals of the end--vertices. In fact, the classes $[E^*_0]$, $[E^*_1]$ and $[E^*_2]$ are trivial in $H$, moreover one has (cf. \cite[pg. 301]{NNsw})
\begin{equation*}
H\simeq \oplus_{i\geq 3}\big\langle [E^*_{ij}], 1\leq j\leq m \ : \ \alpha_i[E^*_{ij}] = 0 \ \ \mbox{for all} \ j, \ \mbox{and} \ \sum_j[E^*_{ij}]=0\big\rangle\simeq \oplus_{i\geq 3}(\mathbb{Z}_{\alpha_i})^{m-1}.
\end{equation*}
This implies that the generating monomial cycle  are 
$E_1^*$, $E_2^*$, and for $i\geq 3$ cycle of type 
$\sum_{j=1}^m n_{ij}E^*_{ij}\in L$ ($n_{ij}\in \mathbb{Z}_{\geq 0}$). 
This last condition says that $\sum_{j=1}^m n_{ij}[E^*_{ij}]=0$ in $H$, hence by the above description of $H$,  $\sum_{j=1}^m n_{ij}E^*_{ij}\in L$ (with all $n_{ij}\in\mathbb{Z}_{\geq 0}$)
if and only if 
\begin{equation}\label{eq:nij}
(n_{ij})_j\in\mathbb{Z}\langle (\alpha_i,0, \ldots,0), \ldots, (0,\ldots, 0, \alpha_i),
(1,1,\ldots, 1)\rangle, \ \ \ \ (n_{ij}\in\mathbb{Z}_{\geq 0}).\end{equation}
Hence, there exists integers $k_0,k_1,\ldots, k_m$ such that $n_{ij}=k_j\alpha_i+k_0$.
This implies that all the $\alpha_i$--remainders are the same, say $k_0'$, and, in fact,
 $n_{ij}=k_j'\alpha_i+k_0'$ for non--negative integers  $k_0',k_1',\ldots, k_m'$. 

This implies that $\sum_jn_{ij}=A\alpha _i+Bm$ for non--negative integers $A$ 
and $B$, and the pair $A=0$ and $B=1$  can be  realized.

Write $\hat{\alpha_i} := \alpha_1\dots\alpha_{i-1}\alpha_{i+1}\dots\alpha_n$. Then 
using the identity (\ref{eq:BH1e}) and formulae and the standard formulae for $-(E^*_v, E^*_u)$ we get 
\begin{equation*}
(E_i^*,-E_0^*)=\hat{\alpha_i} \ (i=1,2), \ \ (E_{ij}^*,-E_0^*)=\hat{\alpha_i} \ (i\geq 3). 
\end{equation*}
Hence,  the generators of $\mathcal{S}_M$ are $(E_i^*,-E_0^*)=\hat{\alpha_i}$ for $i=1,2$, and 
$(\sum_jn_{ij}E_{ij}^*,-E_0^*)$ whenever $i\geq 3$ and  $(n_{ij})_j$ satisfies (\ref{eq:nij}).
This last term has the form  $(A\alpha_i+Bm)\hat{\alpha_i}=A\prod_i\alpha_i+Bm\hat{\alpha_i}$.
Since $\prod_i\alpha_i$ is already generated  by $\hat{\alpha_1}$, the generators for
$i\geq 3$  can be replace by $m\hat{\alpha_i}$ (which can be realized). 

\vspace{2mm}

Though in case (ii) the order of $[E^*_0]$ is $2$, hence   one has 
more congruences, the proof goes similarly. Therefore we omit it and invite the interested reader to
check the details.
\end{proof}

\end{document}